\theoremstyle{remark}
\newtheorem{remark}{Remark}[section]
\theoremstyle{definition}
\newtheorem{theorem}{Theorem}[section]
\newtheorem{definition}[theorem]{Definition}
\newtheorem{proposition}[theorem]{Proposition}
\newtheorem{lemma}[theorem]{Lemma}
\newtheorem{corollary}[theorem]{Corollary}
\newtheorem{hypothesis}[theorem]{Hypothesis}
\DeclareMathOperator{\R}{\mathbb{R}}
\DeclareMathOperator{\Rr}{\mathbb{R}}
\DeclareMathOperator{\W}{\mathbb{W}}
\DeclareMathOperator{\E}{\mathcal{E}}
\DeclareMathOperator{\Y}{\mathbb{Y}}
\DeclareMathOperator{\MO}{\mathbb{O}}
\DeclareMathOperator{\MI}{\mathbb{I}}
\DeclareMathOperator{\C}{\mathcal{C}}
\DeclareMathOperator{\N}{\mathbb{N}}
\DeclareMathOperator{\dom}{dom}
\DeclareMathOperator{\loc}{loc}
\DeclareMathOperator{\ra}{\rightarrow}
\newcommand{\de}{\text{d}}
\DeclareMathOperator{\tr}{tr}
\newcommand{\sym}{\text{sym}}
\newcommand{\skw}{\text{skw}}
\newcommand{\f}[1]{{\pmb{ #1}}}%%%%{{\mathbf{ #1}}}
\DeclareMathOperator{\di}{\nabla\cdot}
\newcommand{\ov}[1]{\overline{{#1}}}
\newcommand{\un}[1]{\underline{{#1}}}
\renewcommand{\t}{\partial_t}
\DeclareMathOperator*{\argmin}{arg\,min}
\newcommand{\vv}{\f \varphi}
\newcommand{\bft}{\f{\sigma}}
\newcommand{\tv}{\tilde{\f v}}
\newcommand{\tbF}{\tilde{\bF}}
\newcommand{\tbB}{\tilde{\bB}}
\DeclareMathOperator{\V}{\mathbb V}
\DeclareMathOperator{\Hsig}{\f H^{1}_{0,\sigma}}
\DeclareMathOperator{\Ha}{\f L^2_{\sigma}}
\newcommand{\diver}{\ensuremath{\operatorname{div}}}
\DeclareMathOperator{\inter}{int}
\DeclareMathOperator{\BVl}{{BV}([0,\infty))}
\newcommand{\eT}{\infty}
\newcommand{\otime}{(0,\infty)}
\newcommand{\cltime}{[0,\infty)}
\newcommand{\otimeT}{(0,T)}
\newcommand{\cltimeT}{[0,T]}
\newcommand{\dreidots}{\text{\,\multiput(0,-2)(0,2){3}{$\cdot$}}\,\,\,\,}
\newcommand{\dreidotkom}{\text{\,\multiput(0,0)(0,2){2}{$\cdot$}\put(0,0){,}}\,\,\,\,}
\newcommand{\tU}{\tilde{\f U}}
\renewcommand{\di}{\nabla \cdot }
\newcommand{\e}{\text{\textit{e}}}
\newcommand{\bB}{\mathbb{B}}
\newcommand{\bF}{\mathbb{B}}
\newcommand{\diss}{\Psi}
\newcommand{\Banach}{\V}
\newcommand{\Banachdual}{{\V^*}}
\author{Abramo Agosti, Robert Lasarzik, and Elisabetta Rocca}
\date{\today}
\title%
{Energy-variational solutions for viscoelastic fluid models}	
\begin{document}
%%%%%%%%%%%%%%%%%%%%%%%%%%%%%%%%%%%%%%%%%
%%%%%%%%%%%%%%%%%%%%%%%%%%%%%%%%%%%%%%%%%
\maketitle
%%%%%%%%%%%%%%%%%%%%%%%%%%%%%%%%%%%%%%%%%
\begin{abstract}
In this article, we introduce the concept of \textit{energy-variational solutions} for a large class of systems of nonlinear evolutionary partial differential equations.
Under certain convexity assumptions, the existence of such solutions can be shown constructively by an adapted minimizing movement scheme. Weak-strong uniqueness follows by a suitable relative energy inequality. 

% To every system of evolution equations an associated convex energy and a dissipation potential defined on a reflexive Banach space are such that an energy-dissipation mechanism holds formally. We prove the existence of energy-variational solutions in this abstract framework by  time discretization, based on sequential minimization. Under certain regularity assumptions, these solutions are shown to fulfill a weak-strong uniqueness principle. 

Our main motivation is to apply the general framework to viscoelastic fluid models. Therefore, we give a short overview on different versions of such models and their derivation. The abstract result is applied to two of these viscoelastic fluid models in full detail. In the conclusion, we comment on further applications of the general theory and its possible impact. 

\end{abstract}
\noindent

\noindent
\begin{tabular}{ll}\textbf{MSC2020:} &
35A15, 35D99, 35M33, 35Q35,   76A10.
\\
\textbf{Keywords:} &
Energy-variational solutions, existence, minimizing movements, weak-strong\\& uniqueness, viscoelastic fluids, Oldroyd-B.
\end{tabular}

%%%%%%%%%%%%%%%%%%%%%%%%%%%%%%%%%%%%%%%%%
%\tableofcontents
%%%%%%%%%%%%%%%%%%%%%%%%%%%%%%%%%%%%%%%%%
%%%%%%%%%%%%%%%%%%%%%%%%%%%%%%%%%%%%%%%%%
\section{Introduction}
Viscoelastic fluids can be seen as an in-between-state of Newtonian fluids and elastic solids. In elastic materials, the resistance to deformation is described in terms of  the deformation and in viscous fluids in terms of displacement rates. A combination of both leads to characteristics like a nonlinear stress strain relation or even hysteresis in the stress-strain curve, creep,  or memory effects in the material (see for instance~\cite{DynPolyLiq,hu,lin}). 

A standard model for viscoelastic fluids, is the so-called Oldroyd-B model, which couples the Navier-Stokes equations for the fluid velocity $\f v:\ov\Omega \times [0,\infty )\ra \R^d $  with an additional evolution equation for the left Cauchy--Green tensor~$\bB : \ov\Omega \times [0,\infty )\ra \R^{d\times d}$,
\begin{subequations}\label{Oldroyd}
\begin{align}
\t \f v + ( \f v \cdot\nabla) \f v + \nabla p - \mu \Delta \f v + \beta  \di \bB &= \f f \,, \quad &\di \f v &= 0 \,,\label{Oldroy1}\\
\t \bB + ( \f v \cdot \nabla )\bB  - 2((\nabla \f v)_{\skw} \bB)_{\sym} -\alpha (( \nabla \f v)_{\sym} \bB )_{\sym} + \bB - \MI &= 0 \,, \quad &\bB &= \bB^T\,, \label{Oldroyd2}
\end{align}
endowed with appropriate initial and boundary conditions, where $p$ denotes the pressure of the system, $\mu \in (0,\infty)$ and $\alpha\,,\beta \in\R$ having the same sign. Finally, $\f f$ represents a given source.

If $\alpha\neq 0$, this model formally satisfies the associated dissipation mechanism 
\begin{align}
\mathcal{E}_O( \f v , \bB) \Big|_s^t+\int_s^t \Psi_O(\f v , \bB)
% \int_\Omega 2\mu \lvert (\nabla \f v)_{\sym}\rvert^2 +\frac{\beta}{\alpha} \tr(\bB ( \MI - \bB^{-1})^2)\de x 
\de \tau = \int_s^t\langle \f f , \f v \rangle \de \tau  \,,\label{OldroydEn}
\end{align}
\end{subequations}
where the corresponding energy is given by 
$$ \mathcal{E}_O( \f v , \bB) = \int_\Omega\frac{1}{2}| \f v|^2 +\frac{\beta}{\alpha} \tr (\bB - \MI - \ln \bB)\de x  $$ 
and the dissipation functional by 
$$ \Psi_O(\f v , \bB) : =\int_\Omega2\mu \lvert (\nabla \f v)_{\sym}\rvert^2 +\frac{\beta}{\alpha}  \tr(\bB ( \MI - \bB^{-1})^2)\de x .$$ 

From a mathematical point of view, analysis for these kinds of models is rather challenging. First and foremost  the quadratic terms in~\eqref{Oldroyd2} are not known to be integrable from the \textit{a priori} bounds of~\eqref{OldroydEn}, since these only assure an $L^1$-bound for the stress tensor $\bB$. Secondly, the highly nonlinear terms are not treatable by compactness methods, since no compactness due to embeddings is known to hold for $\bB$. 
For $\alpha=0$, Lions and Masmoudi~\cite{Lions} showed existence of global weak solutions to \eqref{Oldroy1}--\eqref{Oldroyd2} by a technique called propagation of compactness (see also~\cite{fenep}). 
Many other works in the field introduce so-called stress diffusion $-\Delta \bB$ on the left hand-side  of~\eqref{Oldroyd2} in order to improve 
compactness~\cite{mielke,mizerova}. 
This stress diffusion can be rigorously introduced by thermodynamical modeling~\cite{stressdiffusion}, and can serve to model the dispersion and attenuation of elastic waves which may be experimentally observed in some particular situations \cite{roubicek2}, but often it is  added in order to infer strong convergence properties for a suitable approximating sequence of $\bB$.
Moreover, there are numerical studies showing that numerical simulations for viscoelastic models are close to empirical data in the absence of stress diffusion effects~\cite{moresi} or the simulated effects are closer to the expected phenomena for vanishing stress diffusion~\cite{agosti2023}.

In the current article, we therefore propose a novel analytical framework to treat especially viscoelastic models without stress diffusion. We introduce the concept of \textit{energy-variational solutions} for a general class of evolutionary PDEs of the form
\begin{align}
\t \f U + A(\f U) = \f 0 \quad \text{ in }\Y^* \text{ with }\quad\f U(0)=\f U_0 \quad \text{in }\V\,\label{introGEnEq}
\end{align} 
for two Banach spaces $\V$ and $\Y$ such that $ \Y \subset \V^*$, $A\,:\,[0,\infty)\times \V\to \Y^*$, $\f U_0\in \V$. 
A smooth solution to~\eqref{introGEnEq} for any initial value formally fulfills the energy dissipation mechanism 
\begin{align}
\E( \f U) \Big|_s^t +\int_s^t \diss(\f U) \de \tau \leq  0 \label{intreoEnGen}
\end{align}
  for all $ s<t\in\cltime$, where $\E$ and $\diss$ are suitable energy and dissipation functionals defined on $\V$. 
A standard weak solution to~\eqref{introGEnEq} fulfills
\begin{align}
\langle \f U , \Phi \rangle \Big|_s^t -\int_{{s}}^t \langle \f U , \t \Phi \rangle + \langle A(\f U) , \Phi \rangle \de \tau = 0 \,\label{introweak}
\end{align}  
  for all $ s<t\in\cltime$ and $\Phi \in \C^1(\cltime;\Y)$. Introducing an upper bound for the energy $ E \geq \E( \f U)$, subtracting~\eqref{introweak} from~\eqref{intreoEnGen}, and, adding a term for the associated energy defect with weight $\mathcal{K}\,:\, \Y\to [0,+\infty)$, gives the formulation
  \begin{equation}  \label{introEnVar}
          \left[ E - \langle \f U , \Phi\rangle \right]\Big|_s^t + \int_s^t \langle \f U, \t \Phi \rangle 
         % - \mathcal{K}(\Phi) E\de \tau \\
  +
  % \int_s^t
   \diss(\f U) - \langle A(\f U) , \Phi\rangle + \mathcal{K}(\Phi) \left [\E(\f U) -E\right ]\de \tau \leq 0 
  \end{equation}
  % \begin{align}
  % \begin{split}
  % \end{split}
  % \end{align}
  for a.e. $ {s}<t\in\cltime$ and all $\Phi \in \C^1(\cltime;\Y)$.  
~\eqref{introEnVar} is a convex function in $\f U$. Besides this existence of solutions, which is proven constructively via a time-discretization based on sequential minimization, we also provide the weak-strong uniqueness of solutions. This means that all \textit{energy-variational solutions} coincide with a local strong solution emanating from the same initial datum, as long as the latter exists.  Moreover, the solution is a semi-flow. Up to our knowledge, this result is the first one entailing global existence and weak-strong uniqueness result for such a general class of models. 

In comparison to more established solutions concepts, \textit{energy-variational solutions} can be seen as an in-between state of dissipative solutions and measure-valued solutions. In dissipative solutions, the formulation is generalized to an inequality~\cite[Sec.~4.4]{lionsfluid} while  in measure-valued solutions an auxiliary measure is introduced such that the equations is fulfilled in the limit~\cite{DiPernaMajda}. \textit{Energy-variational solutions} combine both approaches by adding an auxiliary variable  relaxing the formulation to an inequality. Nevertheless, the concept has many desirable properties, for instance: the existence and weak-strong uniqueness of \textit{energy-variational solutions} for a large class of models via a constructive existence proof. For certain systems in fluid dynamics, the equivalence of measure-valued solutions and \textit{energy-variational solutions} is known~\cite{hyper}, even though the degrees of freedom are heavily reduced in the second one, since the auxiliary matrix-valued measure is replaced by a real number in every point in time. Moreover, in~\cite{max} it was observed that \textit{energy-variational solutions} are very flexible, in the sense that they allow to identify the limit of numerical schemes, while this result  seems to be out of reach with measure-valued solutions. 
  Moreover, the set of  \textit{energy-variational solutions}  is weakly-star closed and in some cases set-valued continuous, which enables to define minimization schemes on the solution set as proposed in~\cite{envar}.

  Our main result, similar to the one of \cite{hyper} related to hyperbolic conservation laws, introduces a stronger solution concept with respect to the one of dissipative solutions~\cite{lionsfluid,diss} and thus also stronger than the solution concept proposed in~\cite{katharina}. Moreover, the proposed result has potentially multiple applications outside of viscoelastic fluid dynamics, like, for instance, the ones  considered in~\cite{hyper} about the compressible and incpompressible  Euler equations as well as the incompressible magnetohydrodynamics.
  
After a brief introduction to different variants of viscoelastic models, the general result is applied to two viscoelastic fluid models in full detail. 
The first model we consider is taken from~\cite{Malek}, but allowing for vanishing stress diffusion. As in \cite{Malek}, in case where  the  energy in~\eqref{OldroydEn}  is regularized by a quadratic term, leading to the system
 \begin{subequations}
 \label{sysviscointro}     
 \begin{align*}
\t \f v + ( \f v \cdot \nabla ) \f v + \nabla p - \mu \Delta \f v  - \alpha  \di \left ( (1-\beta) (\bF-\mathbb I)  +\beta ( \bF^2 - \bF) \right ) ={}& \f f \,, \quad \di \f v = 0 \,,%\label{introeq1sym2}
\\
\t \bF +  ( \f v \cdot \nabla ) \bF -2[ (\nabla \f v)_{\skw} \bF]_{\sym} - \alpha   [  ( \nabla \f v )_{\sym} \bF]_{\sym}  + (\mathbb I+\delta \bF) (\bF - \mathbb I) 
%+ \delta ( \bF^2 - \bF) 
={}& 0 \,, \quad  ( \bF)_{\skw} = 0 \,, %\label{introeq2sym2}
\end{align*}
with $\mu$, $\delta \in (0,\infty)$,  $\alpha \in \R$, $\beta \in (0,1)$, and the associated energy 
\begin{align*}
\E_Q(\f v ,\bB) :=  \int_\Omega\frac{1}{2}|\f v|^2 + (1-\beta) \tr( \bB - \MI - \ln(\bB))  + \frac{\beta}{2}| \bB-\MI|^2 \de x \,
\end{align*}
 \end{subequations}
and dissipation 
$$
\Psi_Q (\f v, \bB) := \int_\Omega \mu | \nabla \f v|^2 + (1-\beta) \bB:(\MI-\bB^{-1})^2 + (\beta + \delta(1-\beta)) | \bB-\MI|^2 + \delta \beta \bB : (\bB -\MI)^2\de x \,,
$$
such that an associated energy-dissipation relation like in~\eqref{OldroydEn} is formally fulfilled for solutions to~\eqref{sysviscointro} as well. 
We prove existence and weak-strong uniqueness of \textit{energy-variational solutions} to this system by applying the aforementioned general result.

Secondly, we consider a model inspired by~\cite{perrotti} for a symmetrized Neo--Hookean approach, which leads to the equations for the deformation gradient $\mathbb F$, the velocity field  $\f v$, and pressure $p$,
\begin{subequations}\label{intro}
 \begin{align}
\t \f v + ( \f v \cdot \nabla ) \f v + \nabla p - \mu \Delta \f v  - \alpha  \di \left ( \mathbb F^2- \mathbb I  \right ) ={}& \f f \,, \quad \di \f v = 0 \,,\label{intro1}\\
\t \mathbb F +  ( \f v \cdot \nabla ) \mathbb F -[ (\nabla \f v)_{\skw} \mathbb F]_{\sym} - \alpha [  ( \nabla \f v )_{\sym} \mathbb F]_{\sym}  +  ( \mathbb F - \mathbb F^{-1}) ={}& 0 \,, \quad  ( \mathbb F)_{\skw} = 0 \, .\label{intro2}
\end{align}
with $\mu \in (0,\infty)$, $\alpha \in \R$, and the associated energy 
\begin{align*}
   \E_{S}(\f v ,\mathbb F) :=  \frac{1}{2}\int_\Omega \lvert \f v\vert ^2 + \lvert\mathbb F \rvert^2 - | \mathbb I |^2 - \ln \det (\mathbb F^2) \de x \,
\end{align*}
\end{subequations}
{and dissipation }
$$
\Psi_{S}(\f v, \mathbb F ) := \int_\Omega \mu | \nabla \f v |^2 + | \mathbb F - \mathbb F^{-1}|^2 \de x = \int_\Omega \mu | \nabla \f v |^2 +\tr(\mathbb F^2 - 2\MI + \mathbb F^{-2}) \de x,
$$
such that an associated energy-dissipation relation like in~\eqref{OldroydEn} is formally fulfilled for solutions to~\eqref{sysviscointro} as well. 
Also for this model we prove existence and weak-strong uniqueness of \textit{energy-variational solutions}, based on our general result in Banach spaces (cf.~Theorem~\ref{thm:envar}).

In the conclusions, we mention different other models that fit into the proposed abstract framework and give an outlook on possible future research directions.

We note  that we prove the abstract existence result by a constructive time-discretization approach. Similar to the minimizing movements scheme for Gradient flows, we define a time incremental minimization algorithm for an approximation of the solution. We think that this is not the only similarity to the gradient flow setting, where a lot of results were achieved in recent years, like general existence and uniqueness results~\cite{gradientflow} and singular limit results bridging between different scales~\cite{scales}. We think that the general approach presented in this article has the potential to generalize such results toa more general framework.

\section{Preliminaries}
\paragraph*{Notation}
We denote by $\mathcal{C}_{c,\sigma}^\infty(\Omega;\Rr^3)$ the space of smooth solenoidal functions with compact support. By $ L^p_{\sigma}( \Omega) $, $\Hsig(\Omega)$,  and $  W^{1,p}_{0,\sigma}( \Omega)$, we denote the closure of $\mathcal{C}_{c,\sigma}^\infty(\Omega;\Rr^3)$ with respect to the norm of $ L^p(\Omega) $, $  H^1( \Omega) $, and $  W^{1,p}(\Omega)$, respectively. 
% \abramo{Given a Banach space $\mathbb{X}$, we denote by $\mathbb{X}^*$ its dual space.}
By $  \mathbb{R}^{d\times d}$ we denote $d$-dimensional quadratic matrices, by $  \mathbb{R}^{d\times d}_{\sym}$ and $  \mathbb{R}^{d\times d}_{\skw}$ the symmetric and skew-symmetric subsets, and by $  \mathbb{R}^{d\times d}_{\sym,+}$ the symmetric positive definite matrices. 
The identity matrix is denoted by $\mathbb{I} $ and the zero matrix by $\mathbb{O}$.
The symmetric and skew-symmetric part of a matrix $\mathbb A\in \Rr^{d\times d}$ are denoted by $(\mathbb A)_{\sym} $ and $(\mathbb A)_{\skw}$, respectively. 
%\Rob{The positive and negative semi-definite part of  a matrix $A\in \Rr^{d\times d}$ are denoted by $(A)_{+} $ and $(A)_{-}$, respectively.}
We denote $L^p_{\sym}(\Omega) :=  L^p(\Omega;\R^{d\times d}_{\sym})$ and $L^p_{\sym,+}(\Omega) :=  L^p(\Omega;\R^{d\times d}_{\sym,+})$ for $p\in[1,\infty]$.
For a given Banach space $\mathbb{X}$, we denote by $\mathbb{X}^*$ its dual space, the space $\C_w([0,T];\mathbb X )$ denotes the functions on $[0,T]$ taking values in $\mathbb X$ that are continuous with respect to the weak topology of $\mathbb X$. For two Banach spaces $\mathbb X$, $\mathbb Y$, we denote the set of linear continuous mappings from $\mathbb X$ with values in $\mathbb Y$ by $\mathcal{L}(\mathbb X, \mathbb Y)$. 
The total variation of a function $E:[0,\infty)\ra \Rr$ is given by 
$$ \vert E \vert_{\text{TV}([0,\infty))}= \sup_{0=t_0<\ldots <t_n<\infty} \sum_{k=1}^n \lvert E(t_{k-1})-E(t_k) \rvert\,, $$
where the supremum is taken over all finite partitions of the interval $[0,\infty)$. 
We denote the space of all bounded functions of bounded total variations on $[0,\infty)$ by~$\BVl$. 
Note that the total variation of a monotone decreasing nonnegative function $E$ only depends on the initial value, \textit{i.e.,}
\begin{align*}
\vert E\vert_{\text{TV}([0,\infty))} = \sup_{0=t_0<\ldots <t_n<\infty}\sum_{k=1}^n\lvert E(t_{k-1})-E(t_k)\rvert \leq E(0) - E(t_n) \leq E(0) \,.
\end{align*}
% In the usual way, we denote by $\BVl$ the functions $E:[0,\infty) \ra [0,\infty)$ such that $ E\in\BV$ for any $0<T<\infty$. 

Let $\V$ be a Banach space and $\E :\V  \to [0,\infty]$ be a convex, lower semi-continuous function. 
The domain of $\E$ is defined by $\dom\E=\{\f v\in\V\mid\E(\f v)<\infty\}$.
We denote the convex conjugate of $\E$ by $\E^\ast$, 
which is defined by 
\begin{align*}
\E^*(\f z) =\sup_{\f y \in \Banach} \left [
\langle\f z,\f y\rangle
- \E(\f y)\right ] \qquad \text{for all }\f z \in \Banachdual \,.
\end{align*}
Then $\E^*$ is also convex and lower semi-continuous.
% and satisfies $\E^*(\f 0) = \f v_{\min}$.  
We introduce the subdifferential $\partial \E$ of $\E$ by 
\begin{align*}
\partial \E (\f y) := \left  \{ \f z \in \Banachdual \mid 
\forall \tilde{\f y} \in \Banach :\ \E( \tilde{\f y}) \geq \E (\f y) +
\langle \f z , \tilde{\f y} - \f y\rangle
\right \} \,
\end{align*}
for $\f y\in\Banach$.
The subdifferential $\partial\E^\ast$ of $\E^\ast$ is defined analogously.
Then the Fenchel equivalences hold:
For $\f y \in \Banach$, $\f z \in \Banachdual$ we have
\begin{equation}\label{eq:fenchel}
\f z \in \partial \E(\f y)
\quad\iff\quad 
\f y \in \partial \E^*(\f z )
\quad\iff\quad 
\E(\f y ) + \E^*(\f z) = 
\langle\f z , \f y\rangle\,.
\end{equation}
A proof of this well-known result can be found in~\cite[Prop~2.33]{barbu} for example.
If $\partial\E(\f y)$ is a singleton for some $\f y\in \V$, 
then $\E$ is Fr\'echet differentiable in $\f y$ and 
$\partial\E(\f y)=\{D\E(\f y)\}$.
In this case, we identify $\partial\E(\f y)$ with $D\E(\f y)$.

% \begin{lemma}\label{lem:var.affine}
% Let $\mathbb X$ be a Banach space,
% and let $a_1, a_2 \in \R$ and $y_1,y_2\in \mathbb X^\ast$ such that
% \[
% a_1 + \langle  y_1, x \rangle \leq a_2 + \langle  y_2, x \rangle
% \]
% for all $x\in \mathbb X$. 
% Then $a_1\leq a_2$ and $y_1=y_2$.
% \end{lemma}

% \betti{Should we state the lemma in $(0,\infty)$?}

\begin{lemma}\label{lem:invar}
Let $f\in L^1_{\loc}(0,\infty)$, $g\in L^\infty_{\loc}(0,\infty) $ and $g_0\in\R$.
Then the following two statements are equivalent:
\begin{enumerate}[label=\roman*.]
\item
The inequality 
\begin{equation}
-\int_0^\infty\phi'(\tau) g(\tau) \de \tau  + \int_0^\infty \phi(\tau) f(\tau) \de \tau - \phi(0)g_0 \leq 0 
\label{ineq1}
\end{equation}
holds for all $\phi \in {\C}^1_c (\cltime)$ with $\phi \geq 0$.
\item
The inequality
\begin{equation}
    g(t) -g(s) + \int_s^t f(\tau) \de \tau \leq 0 
    \label{ineq2}
\end{equation}
holds for a.e.~$s< t\in\cltime$,
including $s=0$ if we replace $g(0)$ with $g_0$.
\end{enumerate}
If one of these conditions is satisfied, 
then $g$ can be identified with a function in $\BVl$  such that
\begin{equation}
    g(t+) -g(s-) + \int_s^t f(\tau) \de \tau \leq 0 \,
    \label{ineq.pw}
\end{equation}
for all $s\leq t\in[0,\infty)$,
where we set $g(0-)\coloneqq g_0$.
In particular, it holds $g(0+)\leq g_0$ and $g(t+)\leq g(t-)$ for all $t\in\otime$.
\end{lemma}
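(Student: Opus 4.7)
The plan is to route everything through the auxiliary function $\tilde g(\tau) := g(\tau) - g_0 + \int_0^\tau f(\sigma)\,d\sigma$, which is locally bounded with the boundary value $0$ at $\tau=0$. Condition \eqref{ineq2} translates directly into $\tilde g$ being essentially non-increasing on $\cltime$ and, via the case $s=0$, essentially non-positive; conversely, from these two properties one recovers \eqref{ineq2} by subtracting values. Under this setting $\tilde g$ admits a unique non-increasing right-continuous representative $\bar g$, defined pointwise via $\bar g(\tau):=\inf\{\tilde g(\sigma):\sigma>\tau,\,\sigma\text{ a Lebesgue point of }\tilde g\}$, which satisfies $\bar g(0+)\le 0$ and whose Lebesgue--Stieltjes measure $d\bar g$ is non-positive.

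For \eqref{ineq2} $\Rightarrow$ \eqref{ineq1}, I would integrate $\int_0^\infty \phi f\,d\tau = -\int_0^\infty \phi' F\,d\tau$ by parts (with $F(\tau):=\int_0^\tau f\,d\sigma$; boundary terms vanish because $F(0)=0$ and $\phi$ is compactly supported) and combine with $\int_0^\infty \phi'\,d\tau=-\phi(0)$ to collapse the left-hand side of \eqref{ineq1} to $-\int_0^\infty\phi'(\tau)\tilde g(\tau)\,d\tau$. Replacing $\tilde g$ by $\bar g$ (they coincide almost everywhere) and applying Lebesgue--Stieltjes integration by parts once more produces
\[
-\int_0^\infty \phi'(\tau)\bar g(\tau)\,d\tau \;=\; \phi(0)\,\bar g(0+) + \int_{(0,\infty)}\phi(\tau)\,d\bar g(\tau) \;\le\; 0,
\]
since $\phi\ge 0$, $\bar g(0+)\le 0$, and $d\bar g\le 0$.

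Conversely, for \eqref{ineq1} $\Rightarrow$ \eqref{ineq2}, I would test \eqref{ineq1} against a family of nonnegative $\phi_\epsilon\in \mathcal{C}^1_c(\cltime)$ approximating $\chi_{[s,t]}$: equal to $1$ on $[s,t]$ and smoothly interpolated down to $0$ on $[s-\epsilon,s]\cup[t,t+\epsilon]$. Passing $\epsilon\downarrow 0$, Lebesgue differentiation at common Lebesgue points of $g$ delivers $-\int\phi'_\epsilon g\,d\tau\to g(t)-g(s)$ for a.e.\ $s<t$, dominated convergence gives $\int\phi_\epsilon f\,d\tau\to\int_s^t f\,d\tau$, and $\phi_\epsilon(0)=0$. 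For $s=0$ I would instead use $\phi_\epsilon\equiv 1$ on $[0,t]$ with $\phi_\epsilon(0)=1$, which contributes the $-g_0$ term.

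To promote \eqref{ineq2} to the everywhere-valid \eqref{ineq.pw} and to identify $g$ with a BV representative, I would redefine $g$ on a nullset by $g:=\bar g+g_0-F$: a monotone bounded piece plus a locally absolutely continuous one, hence an element of $\BVl$ on every bounded subinterval with one-sided limits $g(\tau\pm)=\bar g(\tau\pm)+g_0-F(\tau)$ at every $\tau>0$. The left-hand side of \eqref{ineq.pw} then collapses to $\bar g(t+)-\bar g(s-)$, which is non-positive by monotonicity for all $0<s\le t$; the convention $g(0-):=g_0$, equivalently $\bar g(0-):=0$, together with $\bar g(0+)\le 0$, yields $g(0+)\le g_0$, and the monotone jump relation $\bar g(t+)\le \bar g(t-)$ delivers $g(t+)\le g(t-)$ throughout $\otime$. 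The principal difficulty is not any single calculation but the bookkeeping around representatives: one must fix the right-continuous non-increasing version of $\tilde g$ and the boundary convention at $0$ once and for all, so that \eqref{ineq.pw} holds at every pair $s\le t$ and not merely almost everywhere.
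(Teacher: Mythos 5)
The paper does not supply its own proof here: it cites \cite[Lem.~2.11]{hyper} and only remarks how to extend from a finite time horizon to $[0,\infty)$. Your proposal is therefore a self-contained reconstruction, and it is essentially correct. Routing everything through the auxiliary function $\tilde g(\tau)=g(\tau)-g_0+\int_0^\tau f$ is a clean way to absorb the source $f$ and the initial datum $g_0$ into a single object that inequality~\eqref{ineq2} declares essentially non-increasing with $\tilde g(0+)\le 0$; the right-continuous non-increasing representative $\bar g$ then makes both the Lebesgue--Stieltjes integration by parts for~\eqref{ineq2}$\Rightarrow$\eqref{ineq1} and the pointwise promotion to~\eqref{ineq.pw} transparent. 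Your test-function approximation of $\chi_{[s,t]}$ for the converse, with Lebesgue differentiation at common Lebesgue points and a separate test function touching $0$ for the $s=0$ case, is the standard and correct device. All four conclusions ($g\in BV$ after redefinition on a null set, the pointwise inequality~\eqref{ineq.pw}, $g(0+)\le g_0$, $g(t+)\le g(t-)$) follow from the identity $g(t+)-g(s-)+\int_s^t f=\bar g(t+)-\bar g(s-)$ exactly as you say.

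One small imprecision worth flagging: from $f\in L^1_{\loc}$ and $g\in L^\infty_{\loc}$ alone, $\bar g$ is non-increasing with $\bar g(0+)\le 0$, hence bounded above, but it need not be bounded below on all of $[0,\infty)$, so the claim of landing in $\BVl$ (which the paper defines as \emph{bounded} functions of bounded total variation on $[0,\infty)$) strictly speaking only follows on each bounded interval. This mirrors the paper's own caveat that the cited reference proves the finite-horizon version; your phrasing ``an element of $\BVl$ on every bounded subinterval'' is a bit garbled but points at the right resolution. In the applications in the paper the additional bound $E\ge\E(\f U)\ge 0$ closes this gap, so this is a defect in how the lemma is stated rather than in your argument.
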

A proof of this assertion can be found in~\cite[Lem.~2.11]{hyper}.
%\Rob{Do we need to write more to the proof? Maybe:}
In the cited Lemma, the assertion is proved for a finite time horizon, but the reasoning also works on $[0,\infty)$ since one may restrict to a finite time, either $t$ in~\eqref{ineq2}
or the domain of the function $\phi$ in~\eqref{ineq1}.
\begin{lemma}\label{lem:initial}
Let $\V$ be a Banach space, $\E:\V\ra [0,\infty)$ strictly convex coercive  and $\Y$ be a Banach space such that $ \Y \subset^d \dom \E^* $. 
    Let $A$, $B\in \dom \E$ such that $ \langle A-B,\Phi \rangle = 0$ for all $\Phi \in \Y$. Then, it holds that $ A=B$. 
\end{lemma}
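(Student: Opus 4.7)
The plan is to derive a contradiction from $A\neq B$ by combining Fenchel--Moreau duality with the strict convexity of $\E$. First, I would extend the vanishing condition from the test space $\Y$ to the whole of $\dom\E^*$: since $\Y \subset^d \dom \E^*$ means that $\Y$ is densely embedded into $\dom\E^*$ (viewed as a subset of $\V^*$) and the duality pairing $\langle\cdot,\cdot\rangle\colon \V\times \V^*\to\R$ is continuous in the second argument, for every $\Phi\in\dom\E^*$ one can pick a sequence $\Phi_n\in\Y$ with $\Phi_n\to\Phi$ in $\V^*$ and pass to the limit in $\langle A-B,\Phi_n\rangle=0$ to obtain $\langle A-B,\Phi\rangle=0$ for all $\Phi\in\dom\E^*$.

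Next I would exploit the Fenchel--Moreau identity. Since $\E$ is proper, convex and (by the standing assumption of the preliminaries) lower semi-continuous, one has $\E=\E^{**}$, that is,
\begin{equation*}
  \E(\f v) \;=\; \sup_{\Phi\in\V^*}\bigl[\langle \f v,\Phi\rangle-\E^*(\Phi)\bigr]
  \;=\; \sup_{\Phi\in\dom\E^*}\bigl[\langle \f v,\Phi\rangle-\E^*(\Phi)\bigr]
\end{equation*}
for every $\f v\in \V$, where the last equality is because the quantity in brackets is $-\infty$ outside $\dom\E^*$. Setting $C:=\tfrac12(A+B)\in\V$, the first step yields $\langle A,\Phi\rangle=\langle B,\Phi\rangle=\langle C,\Phi\rangle$ for every $\Phi\in\dom\E^*$. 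Taking the supremum in the Fenchel--Moreau representation above one obtains $\E(A)=\E(B)=\E(C)$.

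To conclude, assume for contradiction that $A\neq B$. Strict convexity of $\E$ then forces $\E(C)<\tfrac12\E(A)+\tfrac12\E(B)=\E(A)$, which contradicts the equality $\E(C)=\E(A)$ just derived; hence $A=B$. The only delicate point is the legitimacy of invoking $\E=\E^{**}$: one needs lower semi-continuity of $\E$, which is not repeated in the statement but is the standing assumption on $\E$ in the preliminaries. Coercivity, though listed among the hypotheses, is not used directly in the argument; its role is to ensure that $\dom\E^*$ is sufficiently large for the density assumption $\Y\subset^d \dom\E^*$ to be meaningful.
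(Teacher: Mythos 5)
Your proof is correct, and it takes a genuinely different route from the paper's.

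The paper's argument is subdifferential-based: it picks $\xi_A\in\partial\E(A)$ and $\xi_B\in\partial\E(B)$, notes by Fenchel's equivalences that $\xi_A,\xi_B\in\dom\partial\E^*\subset\dom\E^*$, then approximates them by elements of $\Y$ and drives the strict-monotonicity inequality $0<\E(A)+\E(B)-2\E\bigl(\tfrac{A+B}{2}\bigr)\leq\tfrac12\langle\xi_A-\xi_B,A-B\rangle$ to zero via the hypothesis $\langle A-B,\Phi\rangle=0$ on $\Y$. That argument leans on two extra ingredients: the nonemptiness of $\partial\E(A)$ and $\partial\E(B)$ (which for a convex, finite-valued, l.s.c.\ $\E$ on a Banach space follows from continuity of $\E$), and the density of $\dom\partial\E^*$ in $\dom\E^*$ (cited from Barbu--Precupanu, Cor.~2.38). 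Your proof bypasses both: you extend the orthogonality relation to all of $\dom\E^*$ by density and continuity of the pairing, then observe that the Fenchel--Moreau representation $\E(\f v)=\sup_{\Phi\in\dom\E^*}[\langle\f v,\Phi\rangle-\E^*(\Phi)]$ only sees $\f v$ through its pairings with $\dom\E^*$, so $\E(A)=\E(B)=\E\bigl(\tfrac{A+B}{2}\bigr)$, which strict convexity rules out unless $A=B$. This is a more elementary argument, requiring only biconjugation and no subdifferential machinery; what the paper's approach buys in exchange is that it exhibits the quantitative gap $\E(A)+\E(B)-2\E\bigl(\tfrac{A+B}{2}\bigr)$ explicitly, a quantity the authors reuse elsewhere (e.g.\ in the $\rho$-uniform convexity argument of Corollary~\ref{cor:reg}). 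Your closing remark about the role of coercivity is a fair caveat; one could add that the finiteness $\E:\V\to[0,\infty)$ (rather than $[0,\infty]$) makes $\E$ proper and hence $\E^*$ proper, so $\dom\E^*\neq\emptyset$, which is also needed for the Fenchel--Moreau step to be nonvacuous.
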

\begin{proof}
% Since $A-B\in\dom\E$, we may find a sequence $ \{\Phi_n\}_{n\in\N}$ with $ D\E^*(\Phi^n)\ra A-B$ in $\V$. This implies that
%     \begin{align*}
%         0 &= \langle A-B,\Phi_n\rangle = \lim_{n\ra\infty} \langle D\E^*(\Phi^n),\Phi^n\rangle = \lim_{n\ra\infty} \left[ \E^*(\Phi_n)+ \E(D\E^*(\Phi^n))\right] \\&\geq \liminf_{n\ra\infty} \E(D\E^*(\Phi^n))\geq \E(A-B)\,.
%     \end{align*}
%     This implies due to the assumptions on $\E$ that $A=B$. 
In order to derive a contradiction, we assume that $ A \neq B$. 
Due to the density and the fact that $\dom \partial \E^* $ is dense in $\dom \E^*$~\cite[Cor.\,2.38]{barbu}, we find two sequences $ \{\Phi^n_B\}_{n\in\N} \subset \dom \partial \E ^*$ and $\{\Phi^n_A\}_{n\in\N} \subset \dom \partial \E ^*$ such that 
% $ D \E^*(\Phi^n_A) \ra  A $ and $ D \E^*(\Phi^n_B) \ra  B $ in $\V$. Due to Fenchel's equivalences, we observe that 
$ \Phi^n_A \ra  \xi_A$ and $ \Phi^n_B\ra \xi_B$ in $\V^*$ with $ \xi_A\in \partial \E(A)$ and $\xi_B\in\partial \E(B)$. 
This implies  due to $A \neq B$ and the strict convexity of $\E$ that
\begin{align}
    0 < \E( A) + \E(B) - 2 \E\left ( \frac{A+B}{2}\right)\leq \frac{1}{2}  \langle \xi_A - \xi_B , A-B\rangle  = \lim _{n\ra \infty} \frac{1}{2}\langle \Phi^n_A  - \Phi^n_B , A - B \rangle  =0 \,
\end{align}
which is a contradiction and so we conclude $ A = B$. 
% \Rob{I know that this follows for a strictly monotone operator $\partial \E$, but is $\partial \E $ strictly monotone? We only know that $\E $ is strictly convex...}
\end{proof}

\section{General existence result}
\label{general}
In this section, we consider a general nonlinear evolution equation of the form 
\begin{equation}
\t \f U(t)+A(t, \f U(t)) = 0 \quad \text{ in }\Y^* \text{ with } \quad\f U(0) = \f U_0 \in \V \,.\label{eq:gen}
\end{equation}
% \Rob{We could try to generalize the abstract result to energies with only linear growth?
% We would need the weak compactness of levelsets of $\E $ and weak lower-semicontinuity. Additionally, we need to assure that the mapping $ \alpha \mapsto \E(D\E^*(\alpha\Phi))$ is continuous and onto $[0,\infty)$. THis is the only part, where the suberlinearity enters in the abstract proof....
%  }

\begin{hypothesis}\label{hypo:1}
% \Rob{Do we still need the space $\W$?}
% 
Let  $\Y  \subset \V $ be two Banach spaces such that $  \Y \subset^d \V^*$.
Let $ \V$  be  reflexive Banach space and $\E: \V \ra [0,\infty]$ be a strictly convex, lower semi-continuous, superlinear functional  on $\V$, \textit{i.e.}, $ \lim_{\|\f v\|_{\V}\ra \infty} \frac{\E(\f v)}{\|\f v\|_{\V}}= \infty$. Let $\diss : [0,\infty )\times \V \ra [-C_{\Psi}(t),\infty]$ be a mapping such that $  \dom \diss(t,\cdot) \subset \dom \E$ is convex, $ D\E^*(\f y)\in \dom\diss(t,\cdot)  $ for all  $\f y \in \Y $ and a.e.~$t\in(0,\infty)$.
Here $C_{\Psi}:[0,\infty)\ra[0,\infty)$ is  such that $\int_0^\infty C_{\Psi}(t)\de t<\infty$.
 Let $ A : [0,\infty)\times  \dom \Psi \ra \Y^\ast$ 
such that 
\begin{equation}\label{Adiss}
\langle A (t, D \E^*(\Phi)) , \Phi \rangle = \diss(t,D\E^*(\Phi))
\end{equation}
for all $ \Phi \in \Y $, a.e.~$t\in[0,\infty)$ and $\diss \circ D\E^\ast$ is continuous on $\Y$ with $ \E(\f U_{\min}) = 0 = \diss(t,\f U_{\min})$, being $\f U_{\min}$ the minimizer of $\E$. 
Both $\diss$ and $A$ are assumed to be measurable with respect to the first variable.
Finally, we assume that there exists a convex continuous function $\mathcal{K}:\Y  \ra [0,\infty) $ such that the mapping
\begin{equation}
\f U \mapsto \Psi (t,\f U) - \langle A(t,\f U) , \Phi \rangle+ \mathcal{K}(\Phi) \E(\f U)\label{ass:convex}
\end{equation}
defined on $\dom \diss$  is convex and lower semi-continuous for every $\Phi \in \Y  $ and a.e.~$t\in[0,\infty)$.
\end{hypothesis}
\begin{remark}[Hypothesis~\ref{hypo:1}]
   % Note that the continuity condition on $\diss\circ D\E^*$ assures that for every element $\f y \in \Y$, $D\E^*(\f y)\in\dom \diss$. 
   % From Assumption~\eqref{ass:convex}, we infer that there exists a constant $C>0$ such that 
   % $$
   % \Psi (\f U) - \langle A(\f U) , \Phi \rangle+ \mathcal{K}(\Phi) \E(\f U) \geq - C \qquad \text{for all }\f U \in \V\cap \dom \diss \,, \quad  \Phi \in \Y\,.
   % $$
   % This implies that 
   % $$ 
   % \lvert \langle A(\f U) , \Phi \rangle \rvert \leq \diss(\f U) + C + \max \{ \mathcal{K}(\Phi),\mathcal{K}(-\Phi)\} \E(\f U)  \qquad \text{for all }\f U \in \V\cap \dom\diss \,, \quad  \Phi \in \Y\,,
   % $$
   % which is the boundedness of the equation in terms of the  dissipation and energy, which is a standard requirement. Assumption~\eqref{ass:convex} additionally requires the associated convexity. 

    We note that the condition~\eqref{Adiss} formally assures that the system~\eqref{eq:gen} fulfills the energy-dissipation-mechanism~\eqref{intreoEnGen}. 
    Indeed, formally, we may test~\eqref{eq:gen} by $\partial \E(\f U)$, which leads by the Fenchel equivalences~\eqref{eq:fenchel} and the chain rule formula to 
    $$
    0=\langle \t \f U(t) , \partial \E(\f U(t)) \rangle + \langle A(t,\f U(t)) , \partial \E(\f U(t))\rangle = \t \E(\f U(t)) + \diss(t,\f U(t))\,.
    $$
\end{remark}

\begin{definition}\label{def:envar}
We call a pair $ ( \f U , E)\in L^\infty(\otime;\V)\times \BVl$ an \textit{energy-variational solution} to~\eqref{eq:gen} if $ \E(\f U) \leq E $ a.e. on $(0,\eT)$ and if 
 \begin{equation}  \label{envar}
          \left[ E - \langle \f U , \Phi\rangle \right]\Big|_s^t + \int_s^t \langle \f U, \t \Phi \rangle 
          %- \mathcal{K}(\Phi) E\de \tau \\
  +
%   \int_s^t
   \diss(\tau,\f U) - \langle A(\tau, \f U) , \Phi\rangle + \mathcal{K}(\Phi) \left [\E(\f U) -E\right ]\de \tau \leq 0 
  \end{equation}
 for all $\Phi \in \C^1(\cltime;\Y) $ and
for a.e. $s<t\in\otime$ including $s=0$ with $\f U(0) =\f U_0$.
\end{definition}
\begin{theorem}\label{thm:envar}
For every $\f U_0\in\dom \E$, there exists an \textit{energy-variational solution} in the sense of Definition~\ref{def:envar} with $ E(0+)=\E(\f U_0)$.
\end{theorem}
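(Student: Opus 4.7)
The strategy is a constructive minimizing movements-like discretization in time, tailored to preserve the convex structure of the energy-variational formulation. I would fix a step size $\tau>0$, set $\f U^0:=\f U_0$, $E^0:=\E(\f U_0)$, and iteratively define $\f U^n\in\V$ as the minimizer of a convex functional that combines $\E(\f U)$, $\tau\Psi(t_n,\f U)$, and a coupling to $\f U^{n-1}$. The scheme should be arranged so that Euler--Lagrange comparison with competitors of the form $D\E^{\ast}(\Phi)$---legitimate since $\Y\subset^d\dom\E^\ast$ and $D\E^\ast(\Phi)\in\dom\Psi$ by Hypothesis~\ref{hypo:1}---reproduces, via~\eqref{Adiss} and the Fenchel equivalences~\eqref{eq:fenchel}, a discrete version of the variational inequality~\eqref{envar}. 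Existence and uniqueness of the minimizer follow from strict convexity, superlinearity, and lower semicontinuity of $\E$ together with the convexity of the map in~\eqref{ass:convex} (applied e.g.\ with $\Phi=0$); $E^n$ is then defined recursively so as to preserve the discrete energy-dissipation identity.

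Next come the uniform-in-$\tau$ a priori bounds. Testing the discrete inequality with $\Phi=0$ yields the discrete energy law $E^n-E^{n-1}+\tau\Psi(t_n,\f U^n)\le 0$; summing and using the lower bound $\Psi\ge -C_{\Psi}$ gives $E^n\le \E(\f U_0)+\int_0^\infty C_{\Psi}(t)\de t$, and superlinearity of $\E$ translates this into a uniform $L^\infty(0,\infty;\V)$ bound on the piecewise-constant interpolant $\f U^{\tau}$. The interpolant $E^{\tau}$ is bounded with total variation controlled by $\E(\f U_0)+2\int_0^\infty C_{\Psi}$, so by Helly's selection theorem a subsequence converges pointwise to some $E\in\BVl$. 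Using reflexivity of $\V$ (hence the duality $L^{\infty}(0,\infty;\V)=(L^1(0,\infty;\V^\ast))^\ast$), Banach--Alaoglu extracts a weak-$\ast$ limit $\f U\in L^{\infty}(0,\infty;\V)$.

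The passage to the limit $\tau\downarrow 0$ in the discrete envar inequality tested against an arbitrary $\Phi\in\C^1(\cltime;\Y)$ is the core step and the main obstacle I anticipate. The linear-in-$\f U$ contributions $\langle\f U,\Phi\rangle\rvert_s^t$ and $\int_s^t\langle\f U,\partial_t\Phi\rangle\de\tau$ pass directly under weak-$\ast$ convergence. The delicate nonlinear integrand
\[
\Psi(\tau,\f U)-\langle A(\tau,\f U),\Phi\rangle+\mathcal{K}(\Phi)\,\E(\f U)
\]
cannot be treated by strong compactness---none is available for $\f U^{\tau}$, which is precisely what forces the relaxation $\E(\f U)\le E$ into the solution concept in the first place. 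The saving device is the convexity and lower-semicontinuity assumption~\eqref{ass:convex}: it makes the above integrand weakly lower-semicontinuous in $\f U$, so that a Fatou/Ioffe type argument after time integration lets the inequality pass to the limit. Combined with the pointwise limit of $E^{\tau}$, this produces~\eqref{envar} for the limit pair $(\f U,E)$.

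To conclude with the identification $E(0+)=\E(\f U_0)$, the inequality $E(0+)\le E^0=\E(\f U_0)$ follows from the construction together with the BV-representative produced by Lemma~\ref{lem:invar}. For the reverse bound, I would insert a time-independent $\Phi\in\Y$ into~\eqref{envar} on intervals $[0,t]$ and let $t\downarrow 0$ to identify the weak-$\ast$ trace of $\f U$ at $t=0$ with $\f U_0$ through Lemma~\ref{lem:initial} (invoked with the strict convexity of $\E$ and the density $\Y\subset^d\V^\ast$). Weak lower semicontinuity of $\E$ together with $\E(\f U(t))\le E(t)$ a.e.\ then gives $\E(\f U_0)\le\liminf_{t\downarrow 0}\E(\f U(t))\le E(0+)$, closing the argument.
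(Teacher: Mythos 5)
Your overall strategy --- time discretization with a sequential variational step, a priori bounds from the discrete energy law, weak-$\ast$ compactness in $L^\infty(\otime;\V)\times\BVl$, passage to the limit by the weak lower semicontinuity of the map in~\eqref{ass:convex}, and identification of the initial datum via Lemmas~\ref{lem:invar} and~\ref{lem:initial} --- is indeed the route the paper follows, and your diagnosis of why strong compactness must be circumvented is exactly the motivating observation.

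However, there is a genuine gap at the heart of the discrete step. You define $\f U^n$ as the minimizer of a convex functional and then claim that ``Euler--Lagrange comparison with competitors of the form $D\E^{\ast}(\Phi)$ reproduces a discrete version of~\eqref{envar}.'' This does not follow as stated. What has to be established is that the iterate $\f U^n$ satisfies $\mathcal{F}^n(\f U^n\,|\,\Phi)\le 0$ \emph{for every} $\Phi\in\Y$, i.e.\ $\sup_\Phi\mathcal{F}^n(\f U^n\,|\,\Phi)\le 0$. Minimizing a fixed convex functional $\mathcal{G}$ and comparing with $D\E^{\ast}(\Phi)$ gives only $\mathcal{G}(\f U^n)\le\mathcal{G}(D\E^{\ast}(\Phi))$, which controls the wrong quantity; and since the competitor you can actually handle via~\eqref{Adiss} and the Fenchel equivalences~\eqref{eq:fenchel} \emph{depends on $\Phi$}, it only yields $\inf_{\f U}\mathcal{F}^n(\f U\,|\,\Phi)\le 0$ for each $\Phi$ separately, i.e.\ $\sup_\Phi\inf_{\f U}\le 0$ rather than the required $\inf_{\f U}\sup_\Phi\le 0$. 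The paper bridges precisely this gap: it sets $\f U^n=\argmin_{\f U\in\mathbb D^n}\sup_{\Phi\in\Y}\mathcal{F}^n(\f U\,|\,\Phi)$ over the convex, weakly compact constraint set $\mathbb D^n=\{\f U\mid\E(\f U)+\tau\diss^n(\f U)\le\E(\f U^{n-1})\}$ and invokes \emph{Fan's min--max theorem} to exchange $\inf_{\f U\in\mathbb D^n}\sup_\Phi$ with $\sup_\Phi\inf_{\f U\in\mathbb D^n}$. Only after that swap does the competitor $\hat{\f U}=D\E^{\ast}(\alpha\Phi)$ --- with $\alpha\in(0,1]$ chosen by the intermediate value theorem so that $\hat{\f U}\in\mathbb D^n$, an admissibility step your sketch also omits --- yield the discrete inequality~\eqref{dis:envar}. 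Without the min--max equality and the constraint set, the scheme does not produce a discrete iterate satisfying the inequality you later need to pass to the limit, so the convergence argument, although correctly described, has no input.
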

\begin{remark}
   We note that every \textit{energy-variational solution} with $E = \E(\f U)$ on an interval $[t_0,t_1]\subset \cltime$ is a weak solution. 

%   \betti{I do not understand this argument}
   
   Indeed, we infer from~\eqref{envar} multiplied by $\alpha>0 $ and with $\Phi=\frac{1}{\alpha}\Theta$, where $\Theta \in \C^1(\cltime;\Y)$ and $E=\E(\f U)$ that
   \begin{align*}
    \left[\alpha\E(\f U)- \langle \f U , \Theta \rangle \right]\Big|_s^t+ \int_s^t \left[ \langle\f U , \t \Theta \rangle +\alpha \diss(\tau, \f U) - \langle A(\tau, \f U) , \Theta\rangle 
    % + \mathcal{K}(\Phi) \left[ \E ( \f U) - E \right]
    \right] \de \tau \leq 0 \,
\end{align*}
for all $s<t\in[t_0,t_1]$ and all $\Theta \in \C^1(\cltime;\Y)$. As $\alpha \ra 0 $, we infer the weak formulation~\eqref{introweak} with an inequality. But for $\Phi=-\frac{1}{\alpha}\Theta$ the inequality with the opposite sign is inferred such that we showed the weak formulation~\eqref{introweak}.

\end{remark}
From the formulation~\eqref{envar}, we can even read off a higher regularity of the solution such that the relations are even fulfilled everywhere instead of almost everywhere in time. 
\begin{corollary}\label{cor:reg}
    Let $(\f U, E)$ be an \textit{energy-variational solution} in the sense of Definition~\ref{def:envar}. Then there exists $( \tilde{\f U},\tilde{E}) \in \C_w(\cltime;\V)\times \BVl$ such that $( \tilde{\f U},\tilde{E}) = (\f U, E)  $ a.e. in $\otime$ and the inequality $ \tilde{E}(t) \geq \E(\tilde{\f U}(t))$ holds true for all $t\in[0,\infty)$ and 
    \begin{align*}%\label{eq:envar}
    \left[\tilde E- \langle \tilde{\f U} , \Phi \rangle \right]\Big|_{s-}^{t+}+ \int_s^t \left[ \langle\tilde{\f U} , \t \Phi \rangle + \diss(\tau, \tilde{\f U}) - \langle A(\tau, \tilde{\f U}) , \Phi\rangle + \mathcal{K}(\Phi) \left[ \E ( \tilde{\f U}) -\tilde E \right]
    \right] \de \tau \leq 0 \,
\end{align*}
holds for all $\Phi \in \C^1(\cltime;\Y) $ and
for all $s<t\in\otime$. 

If, additionally, it holds that $E(s+) = \E(\f U(s))$ and $\E$ is $\rho$-uniformly convex, \textit{i.e.,} there exists a strictly monotone increasing function $\rho : [0,\infty) \ra [0,\infty)$ with $\rho(0) =0 $ such that $$ 2 \E \left( \frac{1}{2}(\f U + \f V)\right) \leq  \E( \f U ) + \E( \f V)  - \rho(\| \f U-  \f V \|_{\V}) $$ for all $ \f U$, $\f V\in\dom\E$, then $\f U$ is even right-hand continuous at $s$, \textit{i.e.,} $\lim_{t\searrow s} \f U (t) = \f U(s) $ in $ \V$. 
\end{corollary}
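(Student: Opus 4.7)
The plan is to upgrade the a.e.-defined pair $(\f U, E)$ to distinguished representatives with well-defined one-sided limits at every point, to obtain the sharpened energy-variational inequality~\eqref{envar} in pointwise form, and then, under the additional hypothesis, to improve weak right-continuity of $\f U$ to strong right-continuity. The starting point is to insert a time-independent test function $\Phi \in \Y$ into~\eqref{envar}; the inequality reduces to
$$
\bigl[E(t) - \langle \f U(t), \Phi \rangle\bigr] - \bigl[E(s) - \langle \f U(s), \Phi \rangle\bigr] + \int_s^t \Bigl(\diss(\tau, \f U) - \langle A(\tau, \f U), \Phi\rangle + \mathcal{K}(\Phi)\bigl[\E(\f U) - E\bigr]\Bigr)\de\tau \leq 0
$$
for a.e.~$s<t$. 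Testing first with $\Phi \equiv 0$, using the lower bound $\diss(\cdot, \f U) \geq -C_\Psi \in L^1$ and $\E(\f U) \leq E \in L^\infty_{\loc}$, yields $\diss(\cdot, \f U) \in L^1_{\loc}$; testing additionally with $\pm\Phi$ then gives $\langle A(\cdot, \f U), \Phi\rangle \in L^1_{\loc}$. Lemma~\ref{lem:invar} now applies for each fixed $\Phi$, producing a BV representative of $t \mapsto E(t) - \langle \f U(t), \Phi\rangle$ with one-sided limits at every $t$; since $E \in \BVl$ already admits one-sided limits, so does $t \mapsto \langle \f U(t), \Phi\rangle$.

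Superlinearity of $\E$ combined with $\E(\f U) \leq E \in L^\infty_{\loc}$ confines $\f U$ to a bounded subset of the reflexive space $\V$. Fixing a countable dense subset of $\Y$ and applying the above on a common full-measure set, boundedness together with density $\Y \subset^d \V^*$ yields weak one-sided limits $\f U(t\pm) \in \V$ at every $t$. A crucial observation is that the two one-sided limits must coincide: passing to the limit $s \nearrow t_0$, $t \searrow t_0$ in the inequality with $\Phi$ replaced by $\lambda \Phi_0$ gives $[E(t_0+) - E(t_0-)] - \lambda \langle \f U(t_0+) - \f U(t_0-), \Phi_0\rangle \leq 0$; requiring this for every $\lambda \in \R$ forces $\langle \f U(t_0+) - \f U(t_0-), \Phi_0\rangle = 0$ for all $\Phi_0 \in \Y$, hence $\f U(t_0+) = \f U(t_0-)$ by density. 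I then set $\tilde{\f U}(t) := \f U(t+)$, which defines an element of $\C_w(\cltime; \V)$ via a subsequence argument using boundedness, and $\tilde E(t) := E(t+) \in \BVl$. The bound $\tilde E(t) \geq \E(\tilde{\f U}(t))$ follows from $\E(\f U(\tau)) \leq E(\tau)$ a.e., weak lower semi-continuity of $\E$, and the right-continuity of $\tilde E$. The pointwise form of~\eqref{envar} with endpoints $s-$ and $t+$ is obtained by passing to the one-sided limits along a.e.-valid sequences, using absolute continuity of the integral in its endpoints.

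For the additional claim, assume $E(s+) = \E(\f U(s))$ and that $\E$ is $\rho$-uniformly convex. Applying the uniform convexity to the pair $(\tilde{\f U}(s), \f U(t))$ for $t > s$ yields
$$
\rho\bigl(\|\tilde{\f U}(s) - \f U(t)\|_{\V}\bigr) \leq \E(\tilde{\f U}(s)) + \E(\f U(t)) - 2\E\!\left(\tfrac{1}{2}(\tilde{\f U}(s) + \f U(t))\right).
$$
The weak convergence $\f U(t) \rightharpoonup \tilde{\f U}(s)$ as $t \searrow s$ from the previous step combined with weak lower semi-continuity of $\E$ gives $\liminf_{t\searrow s} 2\E(\tfrac12(\tilde{\f U}(s) + \f U(t))) \geq 2\E(\tilde{\f U}(s))$, while $\E(\f U(t)) \leq E(t)$ and right-continuity of $\tilde E$ yield $\limsup_{t\searrow s} \E(\f U(t)) \leq \tilde E(s+) = \E(\tilde{\f U}(s))$. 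The right-hand side above therefore has nonpositive $\limsup$, and since $\rho$ is nonnegative, strictly monotone, with $\rho(0)=0$, it follows that $\|\tilde{\f U}(s) - \f U(t)\|_{\V} \to 0$.

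The main obstacle lies in the first two steps: extracting $L^1_{\loc}$ integrability of $\diss(\cdot, \f U)$ and $\langle A(\cdot, \f U), \Phi\rangle$ from~\eqref{envar} via the combined tests with $0$, $\Phi$, and $-\Phi$ requires delicate bookkeeping that exploits the convexity structure~\eqref{ass:convex}, and the scaling argument that rules out weak jumps of $\f U$ relies on the validity of~\eqref{envar} for $\Phi$ of arbitrary magnitude together with the boundedness of the integral term in the limit. Once these technical ingredients are in place, the upgrading to pointwise inequalities and the uniform-convexity argument for strong right-continuity are straightforward.
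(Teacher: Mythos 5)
Your proposal is correct and follows essentially the same route as the paper, whose proof of the first part is delegated to the citation \cite[Prop.~3.1]{hyper} and whose original contributions are only the upgrade from $\C_w(\cltime;\Y^*)$ to $\C_w(\cltime;\V)$ via Lemma~\ref{lem:initial} and the strong right-continuity via $\rho$-uniform convexity. You re-derive the cited part from first principles (Lemma~\ref{lem:invar} on a countable dense subset of $\Y$, then the $\lambda$-scaling argument to rule out weak jumps of $\f U$); this is the same underlying mechanism. Two minor deviations are worth noting. First, to identify one-sided weak limits you appeal directly to density $\Y\subset^d\V^*$ together with reflexivity of $\V$, whereas the paper routes this through Lemma~\ref{lem:initial} (uniqueness of limits from strict convexity of $\E$); under Hypothesis~\ref{hypo:1} these are equivalent since superlinearity yields $\dom\E^*=\V^*$, but the paper's lemma is formulated to need only density in $\dom\E^*$. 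Second, for the strong right-continuity you run a $\limsup$/$\liminf$ estimate in which $\rho(\|\tilde{\f U}(s)-\f U(t)\|_{\V})$ is squeezed to zero; the paper instead bounds $\E(\f U(t_n))+\E(\f U(s))-2\E(\cdot)$ from above by a subgradient inequality and uses $\E(\f U(t_n))\to\E(\f U(s))$ — both arguments are valid and rest on the same three ingredients (near-monotonicity of $E$, weak lower semi-continuity of $\E$, and the uniform convexity modulus). The only place where you understate the work is the very first step: establishing that $\diss(\cdot,\f U)$ and $\langle A(\cdot,\f U),\Phi\rangle$ are locally integrable from the tests with $0,\pm\Phi$ requires the lower bound from the convexity~\eqref{ass:convex} combined with weak compactness of the sublevel set of $\E$, which you flag as ``delicate bookkeeping'' but do not spell out; this is exactly the step the paper itself leaves to the reference \cite{hyper}, so the omission is consistent with the paper's level of detail.
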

\begin{proof}
    In the same way as in~\cite[Prop.~3.1]{hyper}, we infer the existence of functions $( \tilde{\f U},\tilde{E}) \in \C_w(\cltime;\Y^*)\times \BVl$ fulfilling the asserted inequalities. The higher regularity  $\tilde{\f U}\in\C_w(\cltime;\V)$ follows from Lemma~\ref{lem:initial}. 
Indeed, consider a sequence $\{t_n\}_{n\in\N}\subset [0,\infty)$ such that $t_n\ra t\in \cltime$. Form the boundedness, $\f U\in L^\infty(\otime;\V)$, we infer that the sequence $\{ \f U(t_n)\}_{n\in\N} $ admits a weakly converging subsequence in $\V$. The fact that $\f U \in \C_w(\cltime;\Y^*)$ allows to identify this limit in $\Y$ such that Lemma~\ref{lem:initial} and the uniqueness of weak limits also guarantee that $ \f U(t_n)\rightharpoonup \f U(t)$ in $\V$ for the whole sequence. 
   
    Now let $ \{t_n\}_{n\in\N}\subset (s,\infty)$ such that $ t_n\searrow s$, we infer from $\f U \in \C_w(\cltime;\V)$ that $ \f U(t_n)\rightharpoonup \f U(s)$. The monotonicity of $E$ and the weakly lower semi-continuity of $\E$ imply 
    \begin{align}
        E(s) \geq \lim_{t_n \searrow s} E(t_n) \geq \liminf_{n \ra \infty } \E(\f U(t_n)) \geq \E(\f U(s)) = E(s) \,
    \end{align}
    such that $ \lim _{n\ra\infty} \E(\f U(t_n)) = \E( \f U)$. From the weak convergence and the strict convexity of $\E$, we infer the strong convergence by 
    \begin{align*}
        0        & \leq  \lim_{n\ra\infty}  \rho\left( \left\| {\f U(t_n) - \f U(t)} \right\| _{\V}\right) 
        \\
        &\leq \liminf_{n\ra\infty}\left[ \E( \f U(t_n)) + \E( \f U(t)) - 2 \E\left( \frac{\f U(t_n) + \f U(t)}{2}\right) \right] 
       %  \\
       % & \leq \lim_{n\ra\infty}\frac{1}{2} \langle \partial \E( \f U^n) - \partial \E( \f U) , \f U^n - \f U\rangle 
        \\
        & \leq  \lim_{n\ra\infty} \left[ \E(\f U(t_n) ) - \E( \f U(t)) - \langle \partial \E( \f U(t)) , \f U(t_n)  - \f U(t) \rangle \right] = 0 \,, 
    \end{align*}
    where the first inequality is due to the $\rho$-uniform convexity of $ \E$, the second one follows from  the definition of the subdifferential, and the equality follows from the convergence of the energy and the weak convergence.   This proves the strong convergence. 
\end{proof}
\begin{remark}[Properties of solutions]\label{rem:semi}
\textit{Energy-variational solutions} enjoy the semi flow property. That means that for a solution on $ \cltime$ every restriction   on an interval $[s,t]$ for all $ s<t\in\cltime$ to the initial value $( \f U(s), E(s-))$ is again a solution. Moreover, if $ ( \f U^1,E^1)$ is an \textit{energy-variational solution} on $[r,s]$ to the initial value $(\f U^1(r),E^1(r-))$ and $ (\f U^2,E^2)$ is an \textit{energy-variational solution} on $[s,t]$ with initial value $(\f U^2(s),E^2(s-))$ with $ 0\leq r<s<t<\infty$ and $(\f U^1(s),E^1(s+))=(\f U^2(s),E^2(s-)) $ then the concatenation
    \begin{align}
        (\f U,E) : = \begin{cases}
            (\f U^1,E^1) & \text{ on }[r,s]\\(\f U^2,E^2) & \text{ on }[s,t]
        \end{cases}
    \end{align}
    is a solution on $[r,t]$ with initial value $(\f U(r),E(r-) )=(\f U^1(r),E^1(r-)) $.

    \textit{Energy-variational solutions} are not unique, they are even far from being unique. If $(\f U, E)$ is an \textit{energy-variational solution} in the sense of Definition~\ref{def:envar}, also $(\f U, E+h)$ is an \textit{energy-variational solution} for any $h\geq 0$. The set is therefore admittedly too large. But this peculiar non-uniqueness is ruled out by the condition $E(0)=\E(\f U_0)$ in Theorem~\ref{thm:envar}. Nevertheless, \textit{energy-variational solution} cannot be expected to be unique. However, the set of \textit{energy-variational solutions} to a given initial value is convex and weakly-star closed, which follows in the same way as in~\cite[Prop.~3.4]{hyper}. Moreover, for the example of incompressible Newtonian fluid dynamics the solution set is continuous in the Kuratowski sense. These can be seen as indicators that the solution set is amenable for additional selection criteria in order to single out a reasonable solution via minimization. 
    Already the proposed algorithm in~\eqref{eq:timedis} follows from this idea by selecting the time-discrete solution that minimizes the energy at the current time point. 
\end{remark}

In order to state the second main result of the paper concerning weak-strong uniqueness of solutions, we need to reinforce Hypothesis~\ref{hypo:1} by the following.
%\Rob{ Introduce the Nemitzkij operator of the dissipation potential. Why can we integrate the product of $\mathbb Z(0,\infty) $ and $ \mathbb Z(0,\infty) ^* $ over $(s,t)$?  }
\begin{hypothesis}\label{hypo:2} In addition to Hypothesis~\ref{hypo:1}, we  
% define $ M = (\inter \dom \E \cap \inter\dom \Psi) $ and 
assume that $ \E\in\C^2( \inter \dom \E )$ and that there exist a  Banach space $ \W $ with $\Y \subseteq \W \subseteq \V$, $p\in[1,\infty)$, and a constant $C>0$ such that  $$
\int_0^T \| \f U \|_{\W}^p\de t \leq C \int_0^T \left(\Psi(t,\f U) +C_{\Psi} + (\E(\f U))^p+ 1\right) \de t 
\,$$
for all $T>0$ and for all $\f U \in \dom \Psi $, which implies that $\dom \Psi \subset \W\subset \V $. 
Additionally, we  assume that $ \langle A,\Phi \rangle   $ and $\diss $, are Gateaux differentiable for all $\Phi \in \Y$ and a.e. $t\in (0,T)$ 
with respect to the second variable.
% and $A \circ D \E^* (\Phi) \in \V$ for all $ \Phi\in \Y$. 
    
\end{hypothesis}
Using the space $ \W$, we define $\mathbb Z\otimeT : = L^\infty(\otimeT;\V)\cap L^p(\otimeT;\W)$ and $\widetilde{\mathbb Z}\otimeT:= L^1(\otimeT;\V^*)\oplus L^{p'}(\otimeT;\W^*)$, where $p'$ denotes the conjugated exponent to $p $, \textit{i.e.}, $p'=\frac{p}{p-1}$.
% \abramo{Some observations:
% \begin{itemize}
% \item One could also introduce the possibility that $\V=\V_1\times \V_2 \times \dots$, with associated $\W_1,\W_2,\dots$ and $p_1,p_2,\dots$. In the first example indeed we have that $\W_1=H_{0,\sigma}^1,p_1=2$, $\W_2=L^3,p_2=3$, so we could have different $p$ values, instead of having $L^2(0,T;H_{0,\sigma}^1\times L^3)$. But maybe it's not worth. 
% \end{itemize}}
% \Rob{Include the second point into the subsequent remark... }
\begin{remark}
    This space $\mathbb Z\otimeT $ is only one possibility to characterize additional regularity of the \textit{energy-variational solution}. Indeed, it is possible to choose a larger space $\mathbb Z\otimeT$ in certain examples. 
    One possibility is to assume that the space $\W$ admits a decomposition $ \W=\W_1\times \ldots\times \W_n$ such that $ \f U= ( \f U_1^T,\ldots , \f U^T_n)^T$. Then, we  assume that there exist $ \{ p_i \}_{i=\{1,\ldots,n\}}$ such that $ pi\in[1,\infty)$ and we may  replace the left hand-side of the condition in Hypothesis~\ref{hypo:2} by 
    $ \int_0^T \sum _{i=1}^n \|  \f U_i\|_{\W_i}^{p_i}\de t $. 
    This will already imply higher regularity in the first example below. 
\end{remark}
\begin{proposition}[Relative energy inequality]\label{prop:rel}
Let $ (\f U, E) \in L^\infty(\otime;\V) \times \BVl  $  be an \textit{energy-variational solution} in the sense of Definition~\ref{def:envar} and let Hypothesis~\ref{hypo:2} be fulfilled. Then it holds
\begin{multline}\label{relenin}
    \mathcal{R}(\f U , E| \tU) \Big|_s^t + \int_s^t \mathcal{W}(\tau,\f U| \tU ) + \langle \t \tU + A(\tau,\tU) , D^2\E(\tU)(\f U-\tU) \rangle \de \tau 
   \\ \leq \int_s^t \mathcal{K}(D \E(\tU)) \mathcal{R}(\f U , E| \tU)  \de \tau 
    \end{multline}
    for all $s<t\in\cltimeT$ for all $\tU \in \C^0(\cltimeT;\V)\cap L^p(0,T;\W)$ such that
    \begin{align}\label{abstractreg}
    \begin{split}
         D\E(\tU)&\in L^\infty(\otimeT;\Y) \,,\\
         D\diss(t,\tU)&\in \widetilde{\mathbb Z}\otimeT\,,\\
         DA(t,\tU)&\in  \mathcal L ( \mathbb{Z}\otimeT ; L^1(\otimeT;\Y^*))\,, \\
         \t \tU,\,  A(t,\tU)&\in \widetilde{\mathbb Z}\otimeT\,,\\
         D^2\E(\tU)&\in  \mathcal{L}(\mathbb{Z}\otimeT,\mathbb{Z}\otimeT)\,,
             \end{split}
    \end{align}
    where $T\in(0,\infty]$.
Here we denote the relative energy by    \begin{align*}
\mathcal{R}(\f U , E| \tU) &:= E - \E(\tU) - \langle D\E( \tU) , \f U - \tU\rangle 
\intertext{and the relative dissipation via}
\mathcal{W}(t,\f U| \tU) &: = \diss(t,\f U) - \diss(t,\tU) - \langle D \diss(t,\tU), \f U- \tU\rangle \\&\quad - \langle A(t,\f U)-A(t,\tU)-DA(t,\tU)(\f U-\tU) , D\E(\tU)\rangle 
    \\
   & \quad + \mathcal{K}(D \E(\tU))\left[\E( \f U) - \E( \tU) - \langle D \E( \tU) ,\f U - \tU \rangle\right] \,.
    \end{align*}
   \end{proposition}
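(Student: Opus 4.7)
The plan is to test the energy-variational inequality~\eqref{envar} with the particular choice $\Phi=D\E(\tU)$ and reorganise the resulting terms into the relative energy/relative dissipation structure of~\eqref{relenin}. Since~\eqref{abstractreg} only gives $D\E(\tU)\in L^\infty(\otimeT;\Y)$ whereas Definition~\ref{def:envar} requires $\Phi\in\C^1(\cltime;\Y)$, a preliminary step is to approximate $D\E(\tU)$ by a mollified sequence in $\C^1(\cltime;\Y)$ and pass to the limit in~\eqref{envar}. Convergence in each nonlinear term is ensured by the regularity assumptions in~\eqref{abstractreg} and Hypothesis~\ref{hypo:2} (in particular the continuity of $\Psi\circ D\E^\ast$, the continuity of $\mathcal{K}$ and the Gateaux differentiability of $A$ and $\Psi$), together with the implicit $\mathbb{Z}\otimeT$-integrability of $\f U$. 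This approximation step is the main technical obstacle; the rest of the argument is algebraic.

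Via the Fenchel--Young identity $\langle D\E(\tU),\tU\rangle = \E(\tU)+\E^\ast(D\E(\tU))$, the boundary term rewrites as $E-\langle\f U,D\E(\tU)\rangle = \mathcal{R}(\f U,E|\tU) - \E^\ast(D\E(\tU))$, while the chain rule combined with $D\E^\ast(D\E(\tU))=\tU$, another consequence of~\eqref{eq:fenchel}, yields
\begin{equation*}
\E^\ast(D\E(\tU))\Big|_s^t = \int_s^t \langle\tU,\partial_\tau D\E(\tU)\rangle\,\de\tau.
\end{equation*}
Subtracting this from the term $\int_s^t\langle\f U,\partial_\tau D\E(\tU)\rangle\,\de\tau$ already present in~\eqref{envar} and using the symmetry of $D^2\E(\tU)$ reproduces the contribution $\int_s^t\langle\partial_\tau\tU,D^2\E(\tU)(\f U-\tU)\rangle\,\de\tau$ on the left-hand side of~\eqref{relenin}.

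For the dissipation and operator terms I add and subtract the tangent expansion of $\Psi$ and of $\langle A(\cdot),D\E(\tU)\rangle$ at $\tU$. The second-order remainders $\Psi(\tau,\f U)-\Psi(\tau,\tU)-\langle D\Psi(\tau,\tU),\f U-\tU\rangle$ and $-\langle A(\tau,\f U)-A(\tau,\tU)-DA(\tau,\tU)(\f U-\tU),D\E(\tU)\rangle$ are exactly the first two summands of $\mathcal{W}$; the zeroth-order remainder $\Psi(\tau,\tU)-\langle A(\tau,\tU),D\E(\tU)\rangle$ vanishes by~\eqref{Adiss} applied to $\Phi=D\E(\tU)$; and the first-order remainder $\langle D\Psi(\tau,\tU),\f U-\tU\rangle - \langle DA(\tau,\tU)(\f U-\tU),D\E(\tU)\rangle$ is the key term. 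To handle it I differentiate~\eqref{Adiss} as an identity in $\Phi\in\Y$: setting $\tU=D\E^\ast(\Phi)$ and using $(D^2\E^\ast)(D\E(\tU))=(D^2\E(\tU))^{-1}$, a direct computation yields
\begin{equation*}
\langle D\Psi(\tau,\tU),V\rangle - \langle DA(\tau,\tU)V,D\E(\tU)\rangle = \langle A(\tau,\tU),D^2\E(\tU)V\rangle
\end{equation*}
for admissible directions $V$, in particular $V=\f U-\tU$, which is precisely the $A(\tau,\tU)$-contribution on the left of~\eqref{relenin}. Finally, the penalty term is split as
\begin{equation*}
\mathcal{K}(D\E(\tU))\bigl[\E(\f U)-E\bigr] = \mathcal{K}(D\E(\tU))\bigl[\E(\f U)-\E(\tU)-\langle D\E(\tU),\f U-\tU\rangle\bigr] - \mathcal{K}(D\E(\tU))\mathcal{R}(\f U,E|\tU),
\end{equation*}
the first summand supplying the last piece of $\mathcal{W}$ and the second one producing the right-hand side of~\eqref{relenin}. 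Collecting all of these identities reproduces~\eqref{relenin} term by term.
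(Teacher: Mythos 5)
Your proposal is correct and follows essentially the same route as the paper: test~\eqref{envar} with $\Phi=D\E(\tU)$, add and subtract $\langle\t\tU+A(\tau,\tU),D^2\E(\tU)(\f U-\tU)\rangle$, use the Fenchel relation $\E^*(D\E(\tU))=\langle D\E(\tU),\tU\rangle-\E(\tU)$ together with the chain rule to produce the boundary term $\mathcal R(\f U,E|\tU)\big|_s^t$, and differentiate~\eqref{Adiss} to identify the first-order remainder with $\langle A(\tau,\tU),D^2\E(\tU)(\f U-\tU)\rangle$. Your two small deviations — phrasing the time-derivative bookkeeping through $\E^*$ rather than through the direct identity $-\t[\E(\tU)-\langle D\E(\tU),\tU\rangle]=\langle\t\tU,D^2\E(\tU)\tU\rangle$, and explicitly flagging that $D\E(\tU)\in L^\infty(\otimeT;\Y)$ must be mollified to a $\C^1(\cltime;\Y)$ test function before~\eqref{envar} can be applied — are cosmetic and, in the second case, a welcome extra precision that the paper leaves implicit.
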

\begin{remark}
    We note that, due to the convexity of the energy $\E$ and the condition $E\geq \E(\f U)$, the relative energy 
    \begin{align*}
        \mathcal{R}(\f U , E| \tU) &:= [E-\E(\f U)] + [\E(\f U) - \E(\tU) - \langle D\E( \tU) , \f U - \tU\rangle]
    \end{align*}
    is non-negative and vanishes iff $ E =\E(\f U)$ and $\tU=\f U$. 
    Moreover, due to assumption~\eqref{ass:convex}, the relative dissipation~$\mathcal W$ is nonegative, since it is the subdifferential of the convex function in~\eqref{ass:convex}. 

    % In order to argue that all terms in the inequality~\eqref{relenin} are well-defined, we observe that $ D^2\E (\tU) $ is a linear mapping from $\V$ to $\V^*$ and $(\f U-\tU)\in\V$ as well as $\t \tU + A(\tU) \in \V$ due to the regularity of $\tU$ as well as Hypothesis~\ref{hypo:2} assuring that $ A\circ D\E^*( D\E(\tU))= A(\tU)\in \V$. 
    Furthermore, the condition $D\E(\tU)\in\Y$ implies that $ \mathcal K(D\E(\tU))$ is well defined. 
    Finally, we observe from Hypothesis~\ref{hypo:1} that $D\E^*(D\E(\tU))\in\dom\diss$. 

    Note that the Gateaux-derivatives of $\diss$ and $A$ are also well defined due to Hypothesis~\ref{hypo:2}.
    
\end{remark}
   
   \begin{corollary}[Weak-strong uniqueness]
   \label{cor:weakstrong}
   Let $ (\f U, E) 
   % \in L^\infty(\otime;\V) \times \BVl 
   $  be an \textit{energy-variational solution} in the sense of Definition~\ref{def:envar} with $E(s)=\E(\f U(s))$ for one $s\in[0,\infty)$ and let Hypothesis~\ref{hypo:2} be fulfilled. Let $\tU \in \C^0(\cltimeT;\V)\cap L^p(0,T;\W)$
   % with $D\E(\tU)\in\Y$, $ D\diss(\tU)\in \V^*$, and $A(\tU)\in\V$, $DA(\tU)\in L ( \V ; \Y^*)$ for all $t\in\cltime$ 
   be a strong solution, \textit{i.e.}, a function fulfilling the equation~\eqref{eq:gen}  in $\widetilde{\mathbb Z}\otimeT $  with $ \f U(s) = \tU(s)$ and $s<T$, enjoying the additional regularity of~\eqref{abstractreg},  then it holds that 
   \begin{align}
       \f U(t) =\tU(t) \quad \text{for all }t\in[s,T]\,.
   \end{align}
   \end{corollary}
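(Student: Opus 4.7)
The plan is to invoke the relative energy inequality of Proposition~\ref{prop:rel} with $\tU$ being the strong solution, exploit that the "error" term involving $\t\tU+A(\tau,\tU)$ vanishes, and conclude by a Gronwall argument. First, since $\tU$ satisfies~\eqref{eq:gen} in $\widetilde{\mathbb Z}\otimeT$ and $D^2\E(\tU)(\f U-\tU)\in\mathbb Z\otimeT$ by~\eqref{abstractreg}, the duality pairing
$$\int_s^t \langle \t \tU + A(\tau,\tU), D^2\E(\tU)(\f U-\tU)\rangle\de\tau$$
is well defined and equals zero. Moreover, by the convexity assumption~\eqref{ass:convex}, the map $\f U\mapsto \Psi(\tau,\f U)-\langle A(\tau,\f U),D\E(\tU)\rangle+\mathcal K(D\E(\tU))\E(\f U)$ is convex, and the relative dissipation $\mathcal{W}(\tau,\f U|\tU)$ is precisely the Bregman-type deviation of this convex functional from its Gateaux tangent at $\tU$, hence $\mathcal{W}(\tau,\f U|\tU)\geq 0$. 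Dropping this nonnegative term in~\eqref{relenin} yields
$$\mathcal R(\f U,E|\tU)\Big|_s^{t}\leq \int_s^t \mathcal K(D\E(\tU))\,\mathcal R(\f U,E|\tU)\de\tau$$
for a.e.\ $t\in[s,T]$.

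Next, I would evaluate the initial term. Since $\f U(s)=\tU(s)$ and $E(s)=\E(\f U(s))$, a direct computation gives
$$\mathcal R(\f U(s),E(s)|\tU(s)) = E(s)-\E(\tU(s)) - \langle D\E(\tU(s)),\f U(s)-\tU(s)\rangle = 0.$$
Applying Corollary~\ref{cor:reg} we may assume that $\f U$ is weakly continuous and that the inequality above holds up to the pointwise values at $s$. The kernel $\mathcal K(D\E(\tU))$ lies in $L^\infty(s,T)$, because $D\E(\tU)\in L^\infty(\otimeT;\Y)$ by~\eqref{abstractreg} and $\mathcal K$ is continuous on $\Y$ and hence bounded on bounded subsets of $\Y$. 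Therefore Gronwall's lemma applies and gives $\mathcal R(\f U(t),E(t)|\tU(t))\leq 0$ for all $t\in[s,T]$.

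Finally, I would decompose
$$\mathcal R(\f U,E|\tU) = \bigl[E-\E(\f U)\bigr] + \bigl[\E(\f U)-\E(\tU)-\langle D\E(\tU),\f U-\tU\rangle\bigr],$$
where both summands are nonnegative: the first by the definition of energy-variational solution ($E\geq \E(\f U)$), and the second as the Bregman distance associated to the convex energy $\E$. Their sum being zero forces each summand to vanish; strict convexity of $\E$ (Hypothesis~\ref{hypo:1}) then implies $\f U(t)=\tU(t)$ for all $t\in[s,T]$, as desired.

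The main obstacle, to my mind, is not the Gronwall step itself but rather ensuring that all terms in the relative energy inequality are well defined and that the initial condition can be evaluated pointwise at $s$: the inequality in Proposition~\ref{prop:rel} is a priori only for a.e.~$s$, so one must use the refined representatives provided by Corollary~\ref{cor:reg} together with the assumption $E(s)=\E(\f U(s))$ in order to start Gronwall from $\mathcal R=0$. Once this is in place, the remaining steps are essentially bookkeeping of the convexity structure built into Hypothesis~\ref{hypo:1}.
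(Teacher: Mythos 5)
Your proof is correct and follows exactly the route the paper intends (the paper leaves Corollary~\ref{cor:weakstrong} without a written proof, but the preceding Proposition~\ref{prop:rel} and the Remark on dissipative solutions make clear that the intended argument is: apply the relative energy inequality with the strong solution as comparator, observe that the term involving $\t\tU+A(\tau,\tU)$ vanishes, drop the nonnegative relative dissipation $\mathcal{W}$, use Gronwall starting from $\mathcal{R}=0$ at $s$, and conclude from strict convexity). Your attention to the pointwise evaluability at $s$ via Corollary~\ref{cor:reg} and the hypothesis $E(s)=\E(\f U(s))$ is exactly the subtle point one must handle, and you handle it correctly; the only inessential imprecision is the statement that a continuous map on $\Y$ is automatically bounded on bounded sets (in infinite dimensions this is not a general fact), but for the convex continuous $\mathcal K$ at hand, and in the concrete applications where $\mathcal K$ is an explicit polynomial in norms, the needed integrability of $\mathcal K(D\E(\tU))$ on $(s,T)$ holds, so the Gronwall step is justified.
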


\begin{remark}
    We note that the above corollary is stronger than the usual weak-strong uniqueness results, since the generalized solution coincides with the strong solution as soon as both coincide at one point in time. Usually these results are only formulated for $s=0$ such that the solutions only coincide, if they coincide in the initial value. Moreover, inequality~\eqref{relenin} gives a continuous dependence results for solutions on the initial value as long as the strong solution exists. 
\end{remark}
\begin{remark}[Comparison to dissipative solutions]
Under Hypothesis~\ref{hypo:2} every \textit{energy}-\textit{varia\-tio\-nal} \textit{solution} is a dissipative solution in the sense of Lions~\cite[Sec.~4.4]{lionsfluid}. Indeed, from inequality~\eqref{relenin}, we infer by a version of Gronwall's inequality and estimating $ E\geq \E(\f U) $ and inserting $ E_0=\E(\f U_0)$ as in the result of Theorem~\ref{thm:envar}  that 
\begin{multline*}
    \mathcal{R}(\f U(t) , \E(\f U(t)) | \tU(t)) \\ + \int_0^t \left[\mathcal{W}(\tau,\f U| \tU ) + \langle \t \tU + A(\tau,\tU) , D^2\E(\tU)(\f U-\tU) \rangle\right ]\e^{\int_s^t\mathcal{K}(D \E(\tU))\de \tau  }  \de s 
   \\ \leq  \mathcal{R}(\f U_0 , \E(\f U_0)| \tU(0) )  \e^{\int_0^t\mathcal{K}(D \E(\tU))\de s  } \,,
    \end{multline*}
     for all $\tU \in \C^0(\cltimeT;\V)\cap L^p(0,T;\W)$ fulfilling~\eqref{abstractreg}, 
    which is the usual dissipative formulation according to Lions. 
\end{remark}

\begin{proof}[Proof of Theorem~\ref{thm:envar}]
We divide the proof into several steps. 

\textit{Step 1: equidistant time discretization.}
Let  $N\in\N$ such that $ \mathcal{K}(0) < N $, we define $\tau :=1/N$, and we set $t^n:= \tau n$ for $n\in \N$
to obtain 
an equidistant partition of $\cltime$.
For convenience, we define $ \Psi^n : \V \ra [0,\infty )$ via $ \Psi^n (\f U) := \frac{1}{\tau}\int_{t^{n-1}}^{t^n} \diss (t, \f U) \de t $, $A^n : \dom \diss ^n \ra \Y^* $ via $ A^n (\f U) := \frac{1}{\tau}\int_{t^{n-1}}^{t^n} A ( t,\f U) \de t $, and $ C_{\Psi}^n = \frac{1}{\tau}\int_{t^{n-1}} ^{t^n} C_{\Psi}(t)\de t$. 
We set $\f U^0:=\f U_0\in \dom \E$ and define 
\begin{align*}
\mathcal{F}^n(\f U | \Phi) :={}& \E (\f U )  - \E(\f U^{n-1}) + \left ( \E (\f U )  - \E(\f U^{n-1}) -  \tau C_{\Psi}^n\right )  \tau \mathcal{K}(\Phi) 
\\&- \left \langle \f U - \f U^{n-1} , \Phi \right \rangle+ \tau \diss^n(\f U) - \tau \langle A^n(\f U) , \Phi \rangle \,.
\end{align*}
% \abramob{Shouldn't be also $\diss^n(\f U)$ multiplied by $\tau$?...}
The set $\mathbb D^n:= \{ \f U \in \V \mid \E(\f U)+\tau \diss^n(\f U)  \leq \E(\f U^{n-1}) \}$ is convex and weakly compact in $\V$. Indeed, the convexity follows from condition~\eqref{ass:convex} with $\Phi=0$ and all $t\in[t^{n-1},t^n)$ and the fact that $\tau \mathcal{K}(0)<1$ such that $  \E(\f U)+\tau \diss^n(\f U) = \tau(\diss^n(\f U) + \mathcal{K}(0)\E(\f U)) + (1-\tau\mathcal{K}(0))\E(\f U) $. Since $ (1-\tau\mathcal{K}(0))> 0$ and $\E$ is coercive, the set $\mathbb D^n $ is even weakly-compact.
We define the iterate for $n\geq 1$ for a given $\f U^{n-1}$ via
\begin{equation}\label{eq:timedis}
\begin{aligned}
  \f U^n= \argmin _{\f U \in \mathbb D^n }  \, &\sup _{ \Phi \in \Y
  } \!
\mathcal{F}^n(\f U | \Phi) \,.
\end{aligned}
\end{equation}
In the next steps, we show that this definition actually makes sense. 

\textit{Step 2: Min-max theorem.}
In order to show that 
\begin{equation}
\inf_{\f U \in \mathbb D^n
% \f U \in \V,
% \E(\f U)\leq \E(\f U^{n-1})
} \sup_{\Phi \in\mathbb{Y}} \mathcal{F}^n(\f U| \Phi  ) = \sup _{\Phi\in\mathbb{Y}} \inf_{
% \f U \in \V,
\f U \in \mathbb D^n
% \E(\f U)\leq \E(\f U^{n-1})
}\mathcal{F}^n(\f U| \Phi ) \,\label{minmax}
\end{equation}
we apply a min-max theorem.
Recall that
 the set $\mathbb D^n$ is convex and weakly compact in $\V$. 
% the set $\dom \E $ is weakly compact in $\V$ by reflexivity
The function 
$ \f U \mapsto \mathcal{F}^n  ( \f U | \Phi)$ is convex and weakly lower semi-continuous for every $ \Phi \in\Y $ due to Hypothesis~\ref{hypo:1}. 
Moreover, the function $ \Phi \mapsto \mathcal{F}^n(\f U| \Phi)$ is concave for all $ \f U \in \V$ with $\E(\f U)\leq \E(\f U^{n-1})+ \tau C_{\Psi}^n$
since $\mathcal K$ is convex.
Therefore, \eqref{minmax} follows from Fan's 
min-max theorem~\cite[Theorem 2]{Fan1953}.

\textit{Step 3: Discrete energy variational inequality.} 
We want to prove the inequality
\begin{equation}
    \inf_{\f U \in \mathbb D^n 
} \sup_{\Phi \in\mathbb{Y}} \mathcal{F}^n (\f U| \Phi  ) \leq 0\,.\label{dis:envar}
\end{equation} 
From \textit{Step 2}, we infer the equality~\eqref{minmax}. In order to show~\eqref{dis:envar}, we consider $\Phi 
\in \Y $ arbitrary and $\tilde{\f U}:= D \E^*(\Phi) \in \dom \E$. We define $\hat{\f U}: = D\E^*(\alpha \Phi)$ with $\alpha =1 $ if $\E(\tU) +\tau \diss^n(\tU)< \E(\f U^{n-1})$ and with $\alpha \in (0,1]$ such that $\E(\hat{\f U}) +\tau \diss^n(\hat{\f U})= \E(\f U^{n-1})$, if $\E(\tU) +\tau \diss(t^n ,\tU)\geq \E(\f U^{n-1})$. We can always find such a value $\alpha\in(0,1]$ since the function 
$$f:[0,1]\ra [0, \E(\tU) +\tau \diss^n(\tU)]; \quad f(\alpha)= \E(D\E^*(\alpha \Phi)) +\tau \diss^n( D\E^*(\alpha \Phi))$$ is continuous with $f(0)=0$ and $f(1)=\E(\tU) +\tau \diss^n( \tU) \geq \E(\f U^{n-1})$ due to the intermediate value theorem. The continuity of $\diss \circ D\E^*$ follows from Hypothesis~\ref{hypo:1} and the continuity of $\alpha \mapsto\E \circ D\E^*(\alpha\Phi)$ from convex analysis. 
Indeed, via Fenchel's equivalences, we may write 
$\E \circ D\E^*(\alpha\Phi) = \langle D \E^*(\alpha\Phi) , \alpha \Phi\rangle - \E^*(\alpha \Phi) $ and from the assumptions on $\E$, we infer the continuity 
%and Gateaux-differentiability 
of $\E^*$ with $\dom\E^\ast=\V^*$~\cite[Prop.~2.25 and Thm.~2.14]{barbu}.
The mapping $\partial \E^*$ is single-valued and demi-continuous~\cite[Thm.~5.20]{roubicek}, which implies the continuity of $f$. 
% \Rob{Here, the strict convexity enters. Maybe this is not necessary? }
Since $\f U_{\min}$ is the strict minimizer of $\E$, it holds $ \f 0 \in \partial \E( \f U_{\min })$ and thus by Fenchel's equivalences $\f U_{\min} \in \partial  \E ^*(\f 0)$. Since the subdifferential is indeed single-valued, due to the superlinearity and strict convexity  of $\E$~\cite[Prop.~2.47]{barbu} and~\cite[Thm.~5.20]{roubicek}, we infer $ \{ \f U_{\min}\} = D \E ^*(\f 0)$, which implies due to Hypothesis~\ref{hypo:1} that $f(0)=0$. 

With this choice of $\hat{\f U}$, we observe that $ \inf_{\f U \in \mathbb D^n} \mathcal{F}^n(\f U , \Phi) \leq{} \mathcal{F}^n(\hat{\f U}, \Phi)$ for which, we find
\begin{align*}
     \mathcal{F}^n(\hat{\f U}, \Phi) ={}& [ \E(\hat{\f U}) - \E(\f U^{n-1})]+ \tau\mathcal{K}(\Phi) [ \E(\hat{\f U}) - \E(\f U^{n-1})-{\tau}  C_{\Psi}^n ] + \tau \diss^n(\hat{\f U})\\&  - \frac{1}{\alpha}\left[ 
    \langle \hat{\f U} - \f U^{n-1}, \alpha \Phi \rangle + \tau \langle A^n(\hat{\f U}), \alpha \Phi \rangle 
    \right]
    \\
    \leq {}& \tau\mathcal{K}(\Phi) [ \E(\hat{\f U}) - \E(\f U^{n-1})-\tau C_{\Psi}^n] + \left(1- \frac{1}{\alpha}\right) \left[ 
    \E(\hat{\f U}) + \tau \diss^n( \hat{\f U}) - \E(\f U^{n-1}) 
    \right]
    \\
    ={}& \tau\mathcal{K}(\Phi) [ \E(\hat{\f U}) - \E(\f U^{n-1})-\tau C_{\Psi}^n]\leq -\tau^2\mathcal{K}(\Phi) [  C_{\Psi}^n+\diss^n(  \hat{\f U})] \leq 0 
%    \\ \leq{}&  2\tau^2\mathcal{K}(\Phi)   C_{\Psi}^n 
    \,,
\end{align*}
where we used~\eqref{Adiss} and the convexity of $\E$ to infer the inequality and the choice of $\alpha$ to infer the second equation. 
The choice of $\hat{\f U}$ and the lower bound on $\diss$ allow to deduce the two last inequalities.

\textit{Step 4: Well defined optimization problem.} 
Let us define $\mathcal{H}(\f U) = \sup_{\Phi \in \Y} \mathcal{F}^n(\f U|\Phi)$. As the supremum of convex lower semi-continuous functions, $\mathcal{H}$ is a convex lower semi-continuous function on the convex and weakly compact set $\mathbb D^n$, $\mathcal{H}$  is even strictly convex due to the strict convexity of $\E$. 
% The set $\mathbb D^n= \{ \f U \in \V \mid \E(\f U) \leq \E(\f U^{n-1})\}$ is convex and weakly compact in $\V$. 
From the previous two steps, we infer that $\mathcal{H}$ is proper, \textit{i.e.,} not equal to $+\infty$ everywhere on $\mathbb D^n$. 
Thus, the minimization problem $\min_{\f U\in \mathbb D^n} \mathcal{H}(\f U)$ has a unique minimizer and so definition~\eqref{eq:timedis} makes sense. 

\textit{Step 5: Prolongations.}
For functions $\phi \in \C^\infty_c([0,\eT))$ and $ \Phi \in \C^1( \cltime;\mathbb Y )$,
we define $ \phi^n := \phi(t^n)$, $ \Phi^n := \Phi (t^n)$, and $E^n := \E(\f U^n)$  for $n \in\N$.
Inserting $\Phi = \Phi ^{n-1}$ in~\eqref{dis:envar},
multiplying the resulting inequality by $\phi^{n-1}$ and summing this up for $n\in \N$ provides 
\[
\begin{aligned}
\sum_{n=1}^\infty&\left [ \phi^{n-1} ( E^n-E^{n-1} ) - \phi^{n-1} \langle \f U^n-\f U^{n-1} , \Phi^{n-1} \rangle \right ]
\\
&+ \tau \sum_{n=1}^\infty \phi^{n-1} \left [  \Psi^n(\f U^n) - \langle A^n(\f U^n) , \Phi^{n-1}\rangle + \mathcal{K}(\Phi ^{n-1} ) (\E(\f U^n) -E^{n-1}- \tau C_{\Psi}^n) \right ] \leq 0 \,.
\end{aligned}
\]
Since there exists a $n_0\in\N$ such that $\phi^n=0$ for all $n\geq n_0$,
we may use  a discrete integration-by-parts formula 
and divide by $\tau>0$, 
to obtain
\begin{equation}
\begin{aligned}
-\sum_{n=1}^\infty &\left [\frac{\phi^n- \phi^{n-1}}{\tau} [ E^n- \left \langle  \f U^n,\Phi^{n-1}\right \rangle]  - \phi^{n} \left \langle \f U^n ,\frac{ \Phi^n- \Phi^{n-1}}{\tau}\right  \rangle \right ]- \phi^0 [{ \E(\f U_0)-\langle \Phi^0, \f U_0\rangle }]
\\
&+ \sum_{n=1}^\infty \phi^{n-1} \left [ \diss^n(\f U^n)- \langle A^n(\f U^n), \Phi^{n-1})\rangle  + \mathcal{K}(\Phi ^{n-1} ) (\E(\f U^n) -E^{n-1}- \tau C_{\Psi}^n) \right ] \leq 0 \,.
\end{aligned}
\label{disrel}
\end{equation}

%for all $\Phi\in\C^1(\cltime;\Y)$ and $\phi\in \C_c^\infty(\cltime)$ with $\phi \geq 0$. 
We define the piece-wise constant prolongations
\[
\begin{aligned}
\ov{\f U}^N(t) &:= \begin{cases}
\f U^n & \text{for } t \in (t^{n-1},t^n], \\
\f U_0 & \text{for } t = 0,
\end{cases} \,
%\qquad
%\ov{t}^N(t) := \begin{cases}
%t^n & \text{for } t \in (t^{n-1},t^n], \\
%0 & \text{for } t = 0,
%\end{cases} \,
\\
\ov {E}^N(t) &:= \begin{cases}
\mathcal{E}(\f U^{n})
% +(t-t^{n-1})/\tau  [ \mathcal{E}(\f U^n) - \mathcal{E}(\f U^{n-1}) ]  
& \text{for } t \in (t^{n-1},t^n],
 \\
\mathcal{E}(\f U_0) &\text{for } t = 0,
\end{cases}\,
\qquad
\un{E}^N(t) := 
% \begin{cases}\mathcal{E}(\f U^N)  & \text{for } t  = T, \\
\mathcal{E}(\f U^{n-1} ) 
&
 \text{for } t \in [t^{n-1},t^n).
% \end{cases}\,
%\\
%\\ 
%\hat{\f U}^N(t) := \begin{cases}
%\f U^{n-1} +(t-t^{n-1})/\tau  [ \f U^n - \f U^{n-1} ]  & t \in (t^{n-1},t^n] \\
%\f U_0 & t = 0
%\end{cases}\\ 
\end{aligned}
\]
for all $n\in \mathbb{N}$.
Analogously, for  test functions $ \psi \in \C^1(\cltime; \mathbb{X})$,
where $\mathbb{X}$ is $\R$ or $\mathbb{Y}$,
we define the piece-wise constant and piece-wise linear prolongations by
\begin{align*}
    \overline{\psi}^N(t)& := \begin{cases}
\psi(t^n) & \text{for } t \in (t^{n-1},t^n], \\
\psi(0) & \text{for } t = 0,
\end{cases} \,\qquad
      \underline{\psi}^N(t) := 
       % \begin{cases} \psi(T)  & \text{for } t  = T, \\
\psi(t^{n-1}) \quad\text{for } t \in [t^{n-1},t^n),
% \end{cases}
\\
     \widehat{\psi}^N(t)& := \frac{\psi(t^n)-\psi(t^{n-1})}{\tau}(t-t^{n-1}) + \psi(t^{n-1})
      \quad \text{for } t\in[t^{n-1},t^n]\,
\end{align*}
%\glsadd{interpolate}
for all $n\in\N$. 
Inserting this notation, the discrete energy-variational inequality \eqref{disrel}
becomes
\begin{multline}\label{disenin}
- \int_0^\infty \left[{ \t \widehat{\phi} ^N\left [ \ov E ^N - \langle \ov {\f U}^N, \un{\Phi}^N  \rangle \right ] - \ov \phi^N \langle \ov{\f U}^N  , \t \widehat{\Phi}^N\rangle  + \un{\phi}^N \mathcal{K}(\un{\Phi}^N )(\un{E}^N+\tau C_{\Psi}(t ))}\right] \de t 
\\
+ \int_0^\infty \un\phi^N \left [ \diss (t ,\ov{\f U}^n)  - \langle A(t ,\ov{\f U}^n) ,   {\un{\Phi}^N}\rangle + \mathcal{K}(\un{\Phi}^N )  \E(\ov{\f U}^N) \right ]\de t - \phi(0)\left[\E(\f U_0) - \langle \Phi(0) , \f U_0 \rangle \right] \leq 0\,. 
\end{multline}
%for all $ \Phi \in \C^1( \cltime;\mathbb Y )$ and all $\phi \in \C^1_c(\cltime)$ with $\phi \geq 0$. 

\textit{Step 6: Convergence.}
Since we have 
$ \ov{E}^N+\tau \diss^n( \ov{\f U}^N )\leq \un{E}^N$ such that $ E^n - \tau C_{\Psi}^n \leq E^{n-1} $
fulfilling
$$
\sum_{i=1}^k | E^i-E^{i-1}| 
%\leq \sum_{i=1}^k  E^{i-1}-E^{i}
\leq E^0-E^k+2 \tau\sum_{i=1}^k C_{\Psi}^i \leq E(0) + 2 \int_0^\infty C_{\Psi}(t) \de t \,. 
$$
 we obtain that $t\mapsto \ov{E}^N(t)  $ and  $t\mapsto \un{E}^N(t)  $ are non-negative
 and  bounded in $\BVl$.

Moreover, by the coercivity of $\E $ and the $L^\infty$ bound of $\ov{E}^N$, we infer 
from $ \E ( \ov{\f U}^N(t)) \leq \ov{E}^N(t) $ 
 that the sequence $\{ \ov{\f U}^N\}_{N\in\N}$ is bounded in $L^\infty(\otime;\V)$.
Thus, we may extract (not-relabeled) subsequences 
 such that there exist $\ov{E},\un{E}\in \BVl;$ and $\f U \in L^\infty(\otime;\V)$ such that
\[
\begin{aligned}
\ov{\f U}^{N} &\xrightharpoonup{\ast} \f U 
&&\quad\text{in } L^\infty(\otime;\V)\,,\\
( \ov{E}^N,\,\un{E}^N) &\xrightharpoonup{\ \ast\ } (\ov{E},\, \un{E}) 
&&\quad \text{in }\BVl\,,
% \\
% ( \ov{E}^N,\,\un{E}^N &\xrightarrow{\ \phantom{\ast}\ } (\ov{E},\, \un{E} 
% &&\quad \text{in }L^1(0,\infty) \,,
\end{aligned}
\]
where we used the weak$^*$ compactness of functions of bounded variation in $\BVl$~\cite[Proposition 3.13]{BV}.
% where we used Helly's selection theorem (see~\cite[Thm.~1.126]{barbu} for example). \Rob{This reference is only for finite time horizon, we need to find a reference for the unbounded time horizon?? But we do not need BV loc but BV on $[0,\infty)$, i.e., the bound should be uniform...} \abramo{I found pointwise convergence like in the Helly's theorem only for BV$_{\text{loc}}$. For weak-star convergence, there is the result in BV$(\Omega)$, with $\Omega$ any open subset in $\mathbb{R}^N$, reported in the book "Functions of Bounded Variation
% and Free Discontinuity Problems" from Ambrosio, Fusco, Pallara, Oxford University Press 2000, Proposition 3.13 pag.125.}
% \Rob{Reformulate with weak$^*$ convergence and add the convergence of the upper inequality...}
We  show next that $\ov{E}^N $ and $\un{E}^N$ converge to the same limit,
that is, $\ov{E}=\un{E}$ a.e.~in $\otime$. 
Due to the estimate~$E^n  \leq E^{n-1}+\tau C_{\Psi}^n $, we find
\begin{multline*}
\int_0^\eT | \ov{E}^N-\un{E}^N|\de t = \sum_{n=1} ^\infty \tau | E^n-E^{n-1}| \leq \sum_{n=1} ^\infty \tau\left( | E^n-E^{n-1}- \tau C_{\Psi}^n| + \tau C_{\Psi}^n\right)  \\= \sum_{n=1} ^\infty \tau\left( E^{n-1}- E^n+ \tau C_{\Psi}^n + \tau C_{\Psi}^n\right)  = 
\sum_{n=1}^\infty \tau   (E^{n-1}-E^{n} + 2\tau  C_{\Psi}^n)  
\\= \tau( E(0) - \lim_{n\ra \infty}E^n) + 2\tau  \int_0^\infty C_{\Psi} (t) \de t \\ \leq\tau  \left ( E(0)+  2\tau  \int_0^\infty C_{\Psi} (t) \de t\right ) \longrightarrow  0 \quad 
\text{ as } N \to\infty\,,
\end{multline*}
where the first equality follows from the definition of $\ov E^n $ and $\un E^N$, the first inequality is the triangle inequality, the subsequent equality follows from the fact that the occurring term in the absolute value has a sign, and the last inequality follows from the non-negativity of the values $E^n$. 
% \betti{Add a step in the above inequality?}
%Since $\BVl$ continuously embeds into $L^1\otime$,
This allows to identify $\un{E}=\ov{E}=:E$ a.e.~in $\otime $.  
On the discrete Level, it holds that $\ov{E}^N(t)=\E(\ov{\f U}^N(t) $ for all $t\in[0,\infty)$. 
For any $ \zeta\in\C^\infty_c([0,\infty))$ with $\zeta(t)\geq 0$ for all $t\in \cltime$, we observe that 
\begin{align*}
    \int_0^\infty \zeta(t) E (t)\de t = \lim_{N\ra\infty} \int_0^\infty \zeta(t)\ov{E}^N(t)\de t = \lim_{N\ra\infty}\int_0^\infty \zeta(t) \E(\ov{\f U}^N(t)) \de t \geq \int_0^\infty \zeta(t) \E(\f U(t)) \de t\,,
\end{align*}
where the inequality holds due to the weakly lower semi-continuity of $\E$ and Fatou's Lemma. 
This implies that $ E \geq \E(\f U)$ a.e. in $\otime$. 
% From the pointwise convergence~in $\cltime$ of $\ov{E}^N$, 
% we infer from
% the weakly-lower semi-continuity of $\E$  that $ E \geq \E(\f U)$ a.e. in $\otime$. 
Since $\phi$ and $\Phi$ are continuously differentiable on $\otime$, we have
\begin{align*}
\t \widehat{\phi}^N &\ra \t \phi, 
& \ov{\phi}^N &\ra \phi, 
& \un{\phi}^N &\ra \phi  
&&\text{ pointwise in } \cltime \text{ as }N \ra \infty \,,
\\
\t \widehat{\Phi}^N &\ra \t \Phi, 
& \un{\Phi}^N &\ra \Phi, 
& \nabla\un{\Phi}^N &\ra \nabla\Phi \quad &\text{in } \Y 
&\text{ pointwise in } \cltime \text{ as }N \ra \infty %\\
\,.
\end{align*}
With these observations, we may pass to the limit in the weak form~\eqref{disenin}. 
We note that $\ov{\f U}^N$  occurs linearly in the first line of~\eqref{disenin}. All other terms are bounded and converge almost everywhere in $(0,\infty)$.
This implies that 
\begin{multline*}
-\lim_{N\ra\infty} \int_0^{\eT} { \t \widehat{\phi} ^N\left [ \ov E ^N - \langle \ov {\f U}^N , \un{\Phi}^N \rangle \right ] - \ov \phi^N \langle \ov{\f U}^N  , \t \widehat{\Phi}^N\rangle  + \un{\phi}^N \mathcal{K}(\un{\Phi}^N )(\un{E}^N +\tau C_{\Psi}(t )) }\de t \\
=- \int_0^\eT { \t {\phi}\left [  E  - \langle   {\f U} , {\Phi} \rangle \right ] -  \phi \langle {\f U}  , \t {\Phi}\rangle  + {\phi} \mathcal{K}({\Phi} ){E}} \de t\,.
\end{multline*}
The term $\tau \int_0^\infty \un{\phi}^N \mathcal{K}(\un{\Phi}^N ) C_{\Psi}(t )\de t $ can be bounded by $\tau $ times a constant such that is vanishes as $\tau \ra 0$.
Observing that the second line in~\eqref{disenin} is bounded from below due to Hypothesis~\ref{hypo:1}, since every lower semi-continuous function on a bounded set is bounded from below, and that $\phi\geq 0$ in $\cltime $, we may apply Fatou's lemma and the weakly-lower semi-continuity of the function from~\eqref{ass:convex} as well as the continuity of $\mathcal{K}$ in order to pass to the limit in the second line of~\eqref{disenin}, which yields
\begin{multline*}
\liminf_{N\ra\infty} \left [ \int_0^\eT \un\phi^N \left [ 
\diss(t ,\ov{\f U}) + \langle A(t , \ov{\f U}) , 
{\un{\Phi}^N}\rangle + \mathcal{K}(\un{\Phi}^N )  \E(\ov{\f U}^N) \right ]\de t \right ] \\
\geq  \int_0^\eT\liminf_{N\ra\infty} { \un\phi^N \left [ \diss(t ,{\ov{\f U}^N}) - \langle A (t ,{\ov{\f U}^N}),   {\un{\Phi}^N}\rangle + \mathcal{K}(\un{\Phi}^N )  \E(\ov{\f U}^N) \right ]}\de t \\
 \geq \int_0^T \phi\left[ \diss (t,\f U)- \langle A(t,\f U) , \Phi\rangle+  \mathcal{K}(\Phi  ) \mathcal{E}(\f U)\right]   \de t\,.
\end{multline*}
Summarizing, we infer in the limit from \eqref{disenin} that
\[
\begin{aligned}
-\int_0^\eT &\t  \phi \left [ {E} - \left \langle {\f U } ,  \Phi \right \rangle\right ]  \de t - \phi(0)\left[\E(\f U_0) - \langle  \f U_0,\Phi(0)  \rangle \right]\\
&+\int_0^\eT \phi  \left [\left \langle {\f U } , \t  \Phi \right \rangle+ \diss(t,\f U) - \langle A (t,\f U),   {{\Phi}}\rangle  +    \mathcal{K}(\Phi ) [\E({\f U}) - E]   \right ] \de t 
 \leq 0\,.
\end{aligned}
\]
Via Lemma~\ref{lem:invar}, we now end up with the energy-variational 
inequality~\eqref{envar} and with
\begin{equation*}
\lim_{t \searrow 0}\left [ E (t) - \langle \f U(t),\Phi(t)  \rangle \right] \leq \E(\f U_0) - \langle\f U_0, \Phi(0)  \rangle\,,
\end{equation*}
after possibly redefining the function on a set of measure zero.
Choosing $\Phi\equiv 0$, we find $E(0+)\leq \E(\f U_0)$ such that $ E(0)=\E(\f U_0)$. Multiplying the above relation by $\alpha>0$ and choosing $\Phi = \alpha^{-1}\tilde \Phi$ for $\tilde\Phi\in\Y$ we find $ \lim_{t \searrow 0}\langle \f U(t),\tilde \Phi \rangle \leq \langle \f U_0 , \tilde \Phi \rangle $ and the same for $\tilde \Phi $ replaced by $ - \tilde \Phi$. That implies
 $\langle\f U(0+)-\f U_0, \Phi\rangle=0 $ for all $\Phi \in \mathbb Y$. From Lemma~\ref{lem:initial}, we infer that $\f U(0+)=\f U_0$ in $\V$. This concludes the proof of Thm.~\ref{thm:envar}.

\end{proof}
\begin{proof}[Proof of Proposition~\ref{prop:rel}]
Adding and subtracting the term $\langle \t \tU + A(t,\tU) , D^2\E(\tU)(\f U-\tU) \rangle$ from \eqref{envar}, {with $\Phi=D\E(\tU)$}, we find
\begin{align}\begin{split}
         0 \geq &  \left[E- \langle \f U , D\E(\tU) \rangle \right]\Big|_s^t+ \int_s^t \left[ \langle\f U , \t D\E(\tU) \rangle - \langle \t \tU ,  D^2\E(\tU)(\f U-\tU)\rangle \right] \de \tau \\
     &
     + \int_s^t\left[  \diss(\tau,\f U) - \langle A(\tau,\f U) , D\E(\tU)\rangle -\langle A(\tau,\f U) , D^2\E(\tU)(\f U-\tU) \rangle
    \right] \de \tau 
    \\&+\int_s^t \langle \t \tU + A(\tau,\f U) , D^2\E(\tU)(\f U-\tU) \rangle + \mathcal{K}(\Phi) \left[ \E ( \f U) - E \right]\de \tau\,.
\end{split}\label{calcrel}
\end{align}
From the main theorem of differential and integral calculus, we observe
\begin{multline}\label{calcderive}
    \int_s^t \left[ \langle\f U , \t D\E(\tU) \rangle - \langle \t \tU ,  D^2\E(\tU)(\f U-\tU)\rangle \right] \de \tau  \\= \int_s^t  \langle \t \tU ,  D^2\E(\tU) \tU\rangle\de \tau  =- \int_s^t \t \left[ \E(\tU) - \langle D\E(\tU), \tU\rangle \right]\de \tau. 
\end{multline}
    From~\eqref{Adiss}, we find $ \langle A (t,\tU) , D \E(\tU)\rangle = \diss( t,\tU)$ for all $ \tU\in\Y$.  Taking the derivative of relation~\eqref{Adiss}, we infer 
    \begin{align}\label{chainrule}
      \langle D\diss(t,\tU), \xi\rangle =   \langle  A (t,\tU) , D^2 \E(\tU)\xi \rangle + \langle D A (t,\tU) \xi , D \E(\tU)\rangle \,. 
    \end{align}
    Inserting~\eqref{calcderive} 
    and the above two relations 
     into~\eqref{calcrel}% 
%and adding~\eqref{Adiss} with $\Phi =D\e(\tU)$
, we conclude
    \begin{align}\begin{split}
         0 \geq & \mathcal R (\f U, E|\tU) \Big|_s^t
     + \int_s^t\left[  \diss(\tau,\f U) - \diss(\tau,\tU)-\langle D \diss(\tau,\tU), \f U-\tU\rangle \right]\de \tau  \\ &-\int_s^t\left[
    \langle A(\tau,\f U)- A(\tau,\tU)-DA(\tau,\tU)(\f U-\tU) , D\E(\tU)\rangle + \mathcal{K}(D \E(\tU)) \left[ \E ( \f U) - E \right]
    \right] \de \tau 
    \\&+\int_s^t \langle \t \tU + A(\tau,\f U) , D^2\E(\tU)(\f U-\tU) \rangle \de \tau\,
\end{split}
\end{align}
which implies the assertion. 
\end{proof}
% \abramo{
\section{Overview of viscoelastic models}
In this section we introduce a brief presentation of different incompressible viscoelastic models available in the literature. Our aim is to apply the abstract theoretical framework introduced in Section \ref{general} to prove the existence of \textit{energy-variational solutions} of viscoelastic models, without the need to introduce, as typically done in the literature, diffusive regularizations in the transport equations for the kinematic variables to obtain the existence of weak solutions. Throughout the discussion, we will indicate the model variants which constitutively satisfy the assumptions introduced in Section \ref{general}, and thus which are fitted by the general theoretical framework, while we will highlight the limits which forbid its application to some other model variants.
\newline
The starting point for the derivation of viscoelastic models is the definition of a kinematic variable $\mathbb{G}:\Omega\times [0,\infty)\rightarrow \mathbb{R}^{d\times d}$, which is a mapping for infinitesimal deformations between the initial and the current configurations of a viscoelastic body. In Eulerian coordinates, given the velocity field $\f v:\Omega\times [0,\infty)\rightarrow \mathbb{R}^{d}$,  the kinematic variable $\mathbb{G}$ satisfies an hyperbolic equation of type
\begin{equation}
    \label{model1}
    D_t\mathbb{G}=h(\mathbb{G},\nabla \f v),
\end{equation}
where $D_t\mathbb{G}:= \partial_t \mathbb{G}+ \f v \cdot \nabla \mathbb{G}$ is the material derivative in the Eulerian coordinates and $h:\mathbb{R}^{d\times d}\times \mathbb{R}^{d\times d}\rightarrow \mathbb{R}^{d\times d}$ is a function representing the time variation of the mapping associated to infinitesimal deformations, depending on both $\mathbb{G}$ and $\nabla \f v$.
\newline
The model equations can be derived from the principle of virtual powers, which gives the conservation equations for the linear and angular momenta expressed in terms of the kinematic
variable $\mathbb{G}$ and of a stress tensor $\mathbb{T}$, which is the power-conjugated variable to $\nabla \f v$. The form of the stress tensor is constitutively assigned, in terms of the kinematic variable and of the velocity field, in order for the system to satisfy the Clausius--Duhem inequality. Specifically,  considering an arbitrary part of the material $R(t)\subseteq \Omega$, moving with the material, the principle of virtual powers states that the sum of the virtual power of acceleration forces plus the virtual power of internal forces expended within $R(t)$ is equal to the virtual power expended in $R(t)$ by material external to $R(t)$ or by external forces (see e.g. \cite{fremond,gurtin}). Moreover, as a consequence of frame-indifference, the internal power expended within $R(t)$ for rigid virtual velocities must be equal to zero. Defining the set of virtual velocities $\mathcal{V}:=\{\hat{\f v}:\Omega\times \cltime\rightarrow \mathbb{R}^{d}, \hat{\f v}={\f v}_d \; \text{on} \; \partial \Omega_d\times \cltime\}$, where $\partial\Omega_d\subseteq \partial \Omega$, we may define the virtual power of acceleration and internal forces as
% \Rob{Do we need additional regularity assumptions on $\f v_d$ Regularity of $\Omega$?? }

\begin{equation}
\label{model1vp}
p_{\text{acc}}(R(t),\hat{\f v}):=\int_{R(t)}D_t \f v\cdot \hat{\f v}{\de x}, \quad p_{\text{int}}(R(t),\hat{\f v}):=\int_{R(t)}\mathbb{T} : \nabla \hat{\f v}{\de x} ,
\end{equation}
and the virtual power of external forces as
\[
p_{\text{ext}}(R(t),\hat{\f v}):=\int_{R(t)}\f f\cdot \hat{\f v}{\de x} +\int_{\partial R(t)}\f t_n\cdot \hat{\f v}{\de S},
\]
where $\f f$ is an external field, while $\f t_n$ is a traction on the boundary of $R(t)$. Then, the principle of virtual powers becomes
\[
p_{\text{acc}}(R(t),\hat{\f v})+p_{\text{int}}(R(t),\hat{\f v})=p_{\text{ext}}(R(t),\hat{\f v}), \quad \forall \hat{\f v}\in \mathcal{V}, \forall R(t)\subseteq \Omega,
\]
which implies, together with the kinematical constraint \eqref{model1}, that the following system of coupled PDEs is satisfied
\begin{align}
    \label{model2}
     D_t \f v -\diver \mathbb T&=\f f, \quad \diver \f v=0,\\
     D_t\mathbb{G}&=h(\mathbb{G},\nabla \f v),
\end{align}
endowed with the boundary conditions
\begin{equation}
    \label{model3}
    {\f v}={\f v}_d \;\; \text{on} \; \partial \Omega_d\times \cltime, \quad \mathbb T\f n=\f t_n \;\;\text{on} \;\; \partial \Omega_n\times \cltime,
\end{equation}
with $\partial \Omega_d\cup \partial \Omega_n=\partial \Omega$, $|\partial \Omega_d\cap \partial \Omega_n|=0$. We observe that we are considering only Dirichlet and Neumann boundary conditions for ease of exposition, but also other boundary conditions, like Robin, could be allowed in the present theoretical framework.
% \Rob{We only allow Dirichlet and Neuman Boundary conditions. Robin or others are excluded??}
Moreover, defining the set of rigid virtual velocities as $\mathcal{V}_{\text{rigid}}:=\{\hat{\f v}:\Omega\times \cltime\rightarrow \mathbb{R}^{d}:\hat{\f v}=\f v_0+\mathbb{A} \f x\}$, for any constant $\f v_0\in \mathbb{R}^d$ and constant $\mathbb{A}\in \mathbb{R}_{skw}^{d\times d}$, the fact that $p_{\text{int}}(R(t),\hat{\f v})=0$ for any $\hat{\f v}\in \mathcal{V}_{\text{rigid}}$, $\forall R(t)\subseteq \Omega$, implies that $\mathbb{T}\in \mathbb{R}_{sym}^{d\times d}$. Assuming the energy density of the system to be of the form
\begin{equation}
\label{model3e}
e(\mathbb{G}, \f v)=\frac{1}{2}|\f v|^2+\tilde{e}(\mathbb{G}),
\end{equation}
i.e. given by the sum of the kinetic energy density and of the elastic free energy density $\tilde{e}$, the Clausius--Duhem inequality takes the form \cite{gurtin}
\begin{equation}
    \label{model4}
    \frac{d}{dt}\int_{R(t)}e(\mathbb{G}, \f v)\leq p_{\text{acc}}(R(t),{\f v})+p_{\text{int}}(R(t),{\f v})+\int_{R(t)}p\diver \f v, 
\end{equation}
where $p$ is the Lagrange multiplier for the incompressibility constraint $\diver \f v=0$. 
Applying the Reynolds transport theorem and using \eqref{model3e} and \eqref{model1vp}, the Clausius--Duhem inequality becomes
\begin{align}
    &\notag \int_{R(t)}\f v \cdot D_t\f v+\int_{R(t)}\frac{\partial \tilde{e}}{\partial \mathbb{G}}:D_t \mathbb{G}+\int_{R(t)}e\diver \f v=\\
    &\label{model4bis} \leq \int_{R(t)}\f v \cdot D_t\f v+\int_{R(t)}\mathbb{T}:\nabla \f v+\int_{R(t)}p\diver \f v,
    \end{align}
Specific forms for the stress tensor $\mathbb{T}$ are then consitutively assigned in order for \eqref{model4bis} to be satisfied.
\newline
Different choices for the kinematic variable $\mathbb{G}$, together with different constitutive assumptions for the form of the free energy density $\tilde{e}(\mathbb{G})$, lead to different viscoelastic models. In the following, we list different variants, associated to different choices of the kinematic variable, which have been considered in the literature.
\begin{itemize}
    \item \textbf{Kelvin--Voigt viscoelasticity.} A standard approach is to consider as kinematic variable {$\mathbb{G}$} the deformation gradient $\mathbb{F}$ (i.e. the Jacobian of the deformation map between the initial and the current configuration) (see e.g. \cite{hu}), which satisfies the transport equation
    \begin{equation}
    \label{model5}
    D_t \mathbb{F}=\nabla \f v \mathbb{F}.
    \end{equation}
    Considering \eqref{model5} in \eqref{model4bis}, we obtain that
    \[
    \int_{R(t)}\frac{\partial \tilde{e}}{\partial \mathbb{F}}\mathbb{F}^T:\nabla \f v\leq \int_{R(t)}\mathbb{T}:\nabla \f v+\int_{R(t)}(p-e)\diver \f v,
    \]
    hence a general constitutive assumption for the stress tensor which verifies inequality \eqref{model4} is
    \begin{equation}
    \label{model6}
    \mathbb{T}=2\mu (\nabla \f v)_{sym}-p\mathbb I+\frac{\partial \tilde{e}}{\partial \mathbb{F}}\mathbb{F}^T=:\mathbb{T}_{visc}+\mathbb{T}_{el},
    \end{equation}
    where we renamed $p\leftarrow p-e$, and $\mu>0$ is the fluid viscosity of the material. Here, $\mathbb{T}_{visc}:=\mu (\nabla \f v)_{sym}-p\MI$ is the viscous contribution to the stress tensor and $\mathbb{T}_{el}=\frac{\partial \tilde{e}}{\partial \mathbb{F}}\mathbb{F}^T$ is the elastic contribution.
   
% \Rob{Should this not be a itemize point for itself?}
% \Rob{\item \textbf{Left Cauchy--Green tensor.}}   
    Another common possibility is to consider as kinematic variable the left Cauchy--Green tensor $\mathbb{B}=\mathbb{F}\mathbb{F}^T\in \mathbb{R}_{sym}^{d\times d}$, which satisfies, as a consequence of \eqref{model5}, the transport equation
    \begin{equation}
    \label{model7}
    D_t \mathbb{B}=\nabla \f v \mathbb{B} + \mathbb{B} (\nabla \f v)^T,
    \end{equation}
    which is a form of the Oldroyd-B equation \cite{oldroyd}.
    Considering \eqref{model7} in \eqref{model4bis}, analogously to \eqref{model6}, a general constitutive assumption for the stress tensor in terms of the variable $\mathbb{B}$
    which verifies inequality \eqref{model4} is
    \begin{equation}
    \label{model8}
    \mathbb{T}_{el}=2\frac{\partial \tilde{e}}{\partial \mathbb{B}}\mathbb{B}.
    \end{equation}
    We also cite the possibility to describe viscoelasticity through the kinematic variable given by the elastic stress tensor \eqref{model8} (see e.g. \cite{malek2}).
    \begin{remark}
        \label{fb}
        As reported in \cite{perrotti}, given the formula $\mathbb{B}=\mathbb{F}\mathbb{F}^T$, the most general transport equation for the deformation gradient $\mathbb{F}$ which implies the Oldroyd-B equation \eqref{model7} is
        \begin{equation}
    \label{model9}
    D_t \mathbb{F}=\mathbb{F} \mathbb{W}+\nabla \f v \mathbb{F},
    \end{equation}
    for any $\mathbb{W}\in \mathbb{R}_{skw}^{d\times d}$.
    \end{remark}
    In \cite{perrotti} the following general form of the transport equation for $\mathbb{F}$ is considered instead of \eqref{model5}:
    \begin{equation}
        \label{model10}
        D_t\mathbb{F}=a\nabla \f v \mathbb{F}+b(\nabla \f v)^T\mathbb{F},
    \end{equation}
    with $a,b\in \mathbb{R}$. This corresponds to non-standard elasticity where the strain measure $\mathbb{F}$ does not necessarily respond as the Jacobian of the deformation, but it may take into account incompatible components coming from the molecular theory of viscoelasticity. Frame indifference implies that there exists $\alpha \in \mathbb{R}$ such that $a=\frac{\alpha+2}{4}$, $b=\frac{\alpha-2}{4}$ \cite{segalman}.
    Considering \eqref{model10} in \eqref{model4bis}, analogously to \eqref{model6}, a general constitutive assumption for the stress tensor which verifies inequality \eqref{model4} is
    \begin{equation}
    \label{model11}
    \mathbb{T}_{el}=(a+b)\frac{\partial \tilde{e}}{\partial \mathbb{F}}\mathbb{F}^T=\frac{\alpha}{2}\frac{\partial \tilde{e}}{\partial \mathbb{F}}\mathbb{F}^T.
    \end{equation}
    As a consequence of \eqref{model10}, the left Cauchy--Green tensor $\mathbb{B}=\mathbb{F}\mathbb{F}^T$ satisfies the transport equation
    \begin{equation}
    \label{model12}
    D_t \mathbb{B}=a\left(\nabla \f v \mathbb{B}+\mathbb{B} (\nabla \f v)^T \right) + b\left((\nabla \f v)^T\mathbb{B}+\mathbb{B} \nabla \f v\right)=2\left((\nabla \f v)_{skw}\mathbb{B}\right)_{sym}+\alpha\left((\nabla \f v)_{sym}\mathbb{B}\right)_{sym}.
    \end{equation}
   Considering \eqref{model12} in \eqref{model4bis}, analogously to \eqref{model6}, a general constitutive assumption for the stress tensor in terms of the variable $\mathbb{B}$
    which verifies inequality \eqref{model4} is
    \begin{equation}
    \label{model13}
    \mathbb{T}_{el}=2(a+b)\frac{\partial \tilde{e}}{\partial \mathbb{B}}\mathbb{B}=\alpha \frac{\partial \tilde{e}}{\partial \mathbb{B}}\mathbb{B}.
    \end{equation}
    \item \textbf{Generalized viscoelasticity with stress relaxation.} We now introduce viscoelastic models which consider the phenomenon of stress relaxation, which is a process resulting from the collective microscopic dissipative phenomena in a viscoelastic material modeling living tissues. We refer the interested reader to \cite{malek2}, where a general framework to describe stress relaxation is obtained by expressing the deformation gradient $\mathbb{F}$ as a multiplicative decomposition in terms of a dissipative component and an elastic component.
    A first variant of \eqref{model10} found in literature (cf., e.g., \cite{perrotti}) is the transport equation 
    \begin{equation}
        \label{model17}
     D_t\mathbb{F}=a\nabla \f v \mathbb{F}+b(\nabla \f v)^T\mathbb{F}-\frac{1}{2\mu_p}\frac{\partial \tilde{e}}{\partial \mathbb{F}},   
    \end{equation}
    where $\mu_p>0$ is a relaxation constant related to the material viscosity. The latter transport equation is the one considered in \cite{perrotti}, which adds to \eqref{model10} a dissipative contribution proportional to $\frac{\partial \tilde{e}}{\partial \mathbb{F}}$. 
    Considering \eqref{model17} in \eqref{model4bis}, analogously to \eqref{model6}, we obtain the general constitutive assumption 
    \begin{equation}
    \label{model15}
    \mathbb{T}_{el}=(a+b)\frac{\partial \tilde{e}}{\partial \mathbb{F}}\mathbb{F}^T=\frac{\alpha}{2}\frac{\partial \tilde{e}}{\partial \mathbb{F}}\mathbb{F}^T.
    \end{equation}
    Moreover, the dissipative contribution in \eqref{model17} gives rise to the term
    \[
    \tilde{\Psi}:=\frac{1}{2\mu_p}\int_{R(t)}\frac{\partial \tilde{e}}{\partial \mathbb{F}}:\frac{\partial \tilde{e}}{\partial \mathbb{F}}
    \]
    in the inequality \eqref{model4}, where $\tilde{\Psi}$ is the dissipation associated to the elastic deformation.
    Another variant of \eqref{model10} found in literature is the transport equation 
    \begin{equation}
        \label{model19}
     D_t\mathbb{F}=a\nabla \f v \mathbb{F}+b(\nabla \f v)^T\mathbb{F}-\frac{1}{2\mu_p}\mathbb{F}\mathbb{F}^{T}\frac{\partial \tilde{e}}{\partial \mathbb{F}},   
    \end{equation}
    where the dissipative contribution is proportional to the elastic free energy derivative up to transformation via the metrics $\mathbb{F}\mathbb{F}^{T}$. The latter contribution in the transport equation gives rise to the dissipation term
    \[
    \tilde{\Psi}:=\frac{1}{2\mu_p}\int_{R(t)}\mathbb{F}^{T}\frac{\partial \tilde{e}}{\partial \mathbb{F}}:\mathbb{F}^{T}\frac{\partial \tilde{e}}{\partial \mathbb{F}}.
    \]
    \newline
    As a consequence of \eqref{model17}, the left Cauchy--Green tensor $\mathbb{B}=\mathbb{F}\mathbb{F}^T$ satisfies the transport equation
    \begin{align}
    \label{model23}
    D_t \mathbb{B}=&a\left(\nabla \f v \mathbb{B}+\mathbb{B} (\nabla \f v)^T \right) + b\left((\nabla \f v)^T\mathbb{B}+\mathbb{B} \nabla \f v\right)-\frac{1}{\mu_p}\frac{\partial \tilde{e}}{\partial \mathbb{B}}\mathbb{B}\\
     \notag =&{2}\left((\nabla \f v)_{skw}\mathbb{B}\right)_{sym}+\alpha\left((\nabla \f v)_{sym}\mathbb{B}\right)_{sym}-\frac{1}{\mu_p}\frac{\partial \tilde{e}}{\partial \mathbb{B}}\mathbb{B}.
    \end{align}
    The latter transport equation may be seen as a generalized Oldroyd-B type equation \cite{oldroyd}.
    Considering \eqref{model23} in \eqref{model4bis}, analogously to \eqref{model6}, we obtain the general constitutive assumption for the stress tensor 
    \begin{equation}
    \label{model21}
    \mathbb{T}_{el}=2(a+b)\frac{\partial \tilde{e}}{\partial \mathbb{B}}\mathbb{B}=\alpha \frac{\partial \tilde{e}}{\partial \mathbb{B}}\mathbb{B},
    \end{equation}    
    and the dissipation term 
    \begin{equation}
    \label{model23d}
    \tilde{\Psi}:=\frac{1}{\mu_p}\int_{R(t)}\frac{\partial \tilde{e}}{\partial \mathbb{B}}:\frac{\partial \tilde{e}}{\partial \mathbb{B}}\mathbb{B}.
    \end{equation}
    The transport equation for the left Cauchy--Green tensor associated to \eqref{model19} is
    \begin{align}
    \label{model25}
    D_t \mathbb{B}=&a\left(\nabla \f v \mathbb{B}+\mathbb{B} (\nabla \f v)^T \right) + b\left((\nabla \f v)^T\mathbb{B}+\mathbb{B} \nabla \f v\right)-\frac{1}{\mu_p}\mathbb{B}\frac{\partial \tilde{e}}{\partial \mathbb{B}}\mathbb{B}\\
     \notag =&{2}\left((\nabla \f v)_{skw}\mathbb{B}\right)_{sym}{+}\alpha\left((\nabla \f v)_{sym}\mathbb{B}\right)_{sym}-\frac{1}{\mu_p}\mathbb{B}\frac{\partial \tilde{e}}{\partial \mathbb{B}}\mathbb{B}.
    \end{align}
    The latter transport equation may be seen as a generalized Giesekus type equation \cite{giesekus}.
    The dissipative contribution in \eqref{model25} gives rise to
    the dissipation term 
    \begin{equation}
    \label{model25d}
    {\tilde{\Psi}}:=\frac{1}{\mu_p}\int_{R(t)}\mathbb{F}^T\frac{\partial \tilde{e}}{\partial \mathbb{B}}\mathbb{F}:\mathbb{F}^T\frac{\partial \tilde{e}}{\partial \mathbb{B}}\mathbb{F}\de x.
    \end{equation}
\end{itemize}
We now specify different variants of viscoelastic models, associated to different choices of the kinematic variable $\mathbb{G}$, given constitutive assumptions for the form of the free energy density $\tilde{e}(\mathbb{G})$. The standard elastic free energy density for Oldroyd-B and Giesekus type models is of the Neo--Hookean form, i.e.
\begin{equation}
    \label{model28}
    \tilde{e}(\mathbb{B})=\tr (\mathbb{B}-\mathbb I - \ln (\mathbb{B})).
\end{equation}
Collecting \eqref{model2}, \eqref{model21}, \eqref{model23} and \eqref{model28} we obtain the following generalized {\em Oldroyd-B model}:
 \begin{subequations}
\label{ob1}     
 \begin{align}
\t \f v + ( \f v \cdot \nabla ) \f v + \nabla p - \mu \Delta \f v  - \alpha  \di  \mathbb{B} ={}& \f f \,, \quad \di \f v = 0 \,,\label{ob1v}\\
\t \mathbb{B} +  ( \f v \cdot \nabla ) \mathbb{B} -{2}[ (\nabla \f v)_{\skw} \mathbb{B}]_{\sym} - \alpha   [  ( \nabla \f v )_{\sym} \mathbb{B}]_{\sym}  + \frac{{1}}{\mu_p}(\mathbb{B} - \mathbb I) ={}& \MO \,, \quad  ( \mathbb{B})_{\skw} = \MO \, .\label{ob1B}
\end{align}
\end{subequations}
Collecting \eqref{model2}, \eqref{model21}, \eqref{model25} and \eqref{model28} we instead obtain the following {\em generalized Giesekus model}:
 \begin{subequations}
\label{gk1}     
 \begin{align}
\t \f v + ( \f v \cdot \nabla ) \f v + \nabla p - \mu \Delta \f v  - \alpha  \di \mathbb{B} ={}& \f f \,, \quad \di \f v = 0 \,,\label{gk1v}\\
\t \mathbb{B} +  ( \f v \cdot \nabla ) \mathbb{B} -{2}[ (\nabla \f v)_{\skw} \mathbb{B}]_{\sym} - \alpha   [  ( \nabla \f v )_{\sym} \mathbb{B}]_{\sym}  + \frac{{1}}{\mu_p}(\mathbb{B}^2 - \mathbb{B}) ={}& \MO \,, \quad  ( \mathbb{B})_{\skw} =\MO \, .\label{gk1B}
\end{align}
\end{subequations}
We observe that both the models \eqref{ob1} and \eqref{gk1} do not satisfy  Hypothesis~\ref{hypo:1} of the abstract theoretical framework, and thus the existence of \textit{energy-variational solutions} to these models cannot be inferred from Theorem \ref{thm:envar}. Indeed, the elastic free energy \eqref{model28} is not superlinear, and thus the energy functional $\mathcal{E}$ does not satisfy the hypothesis in \ref{hypo:1}. Moreover, as observed in the Introduction, the \textit{a priori} bounds coming from the dissipative inequality \eqref{model4} do not imply the integrability of the quadratic terms in \eqref{ob1B}. 
\newline A natural regularization of the energy~\eqref{model28} 
% is to add a quadratic term
% The following elastic free energy density, which interpolates between the linear and the quadratic form and hence which shares the properties of both Oldroyd-B and Peterlin models,
was introduced in \cite{Malek} by adding a quadratic term leading to
\begin{equation}
    \label{model30}
    \tilde{e}(\mathbb{B})=(1-\beta)\tr (\mathbb{B}-\mathbb I - \ln (\mathbb{B}))+\frac{\beta}{2}|\mathbb{B} - \mathbb I|^2,
\end{equation}
with $0<\beta<1$. We note that \eqref{model30} is superlinear. In \cite{Malek}, a transport equation for $\mathbb{B}$ which takes into account both Oldroyd-B type and Giesekus-type relaxation processes was considered, leading to the model
 \begin{subequations}
\label{mal1}     
 \begin{align}
\t \f v + ( \f v \cdot \nabla ) \f v + \nabla p - \mu \Delta \f v  - \alpha  \di \left ( (1-\beta) (\mathbb{B}-I)  +\beta ( \mathbb{B}^2 - \mathbb{B}) \right ) ={}& \f f \,, \quad \di \f v = 0 \,,\label{mal1v}\\
\t \mathbb{B} +  ( \f v \cdot \nabla ) \mathbb{B} -{2}[ (\nabla \f v)_{\skw} \mathbb{B}]_{\sym} - \alpha   [  ( \nabla \f v )_{\sym} \mathbb{B}]_{\sym}  + \mathbb{B} -\mathbb  I + \delta ( \mathbb{B}^2 - \mathbb{B}) ={}& \MO \,, \quad  ( \mathbb{B})_{\skw} = \MO \, .\label{mal1B}
\end{align}
\end{subequations}
Note that for $\delta=0$ the transport equation \eqref{mal1B} is of Oldroyd-B type, while for $\delta>0$ it is of Giesekus type. Note also that the particular form of the relaxation terms in \eqref{mal1B} breaks the dissipative structure of the system if $\delta \neq \frac{\beta}{1-\beta}$, \textit{i.e.}, in this cases the dissipation can not be interpreted as a multiple of the derivative of the free energy with respect to $\bB$. In~\cite{Malek}, stress diffusion is added to the above model, which breaks the dissipative structure as well. We disregard stress diffusion here and prove existence of \textit{energy-variational solutions} to system~\eqref{mal1} in Theorem~\ref{thm:visco2} below. 
\newline
% \betti{I think we should state better here which are the systems that we can treat later.}

We conclude this section by observing that a possible way to overcome the impossibility of inscribing the generalized Oldroyd-B model \eqref{ob1} in our abstract theoretical framework could be to consider its counterpart expressed in terms of the kinematic variable $\mathbb{F}$, i.e. considering the Neo--Hookean elastic free energy density  
\begin{equation}
    \label{model31}
    \tilde{e}(\mathbb{F})=|\mathbb{F}|^2-|\MI|^2-\log(\det (\mathbb{F}^2)),
\end{equation}
which is superlinear. The drawback of this approach is that \eqref{model31} is not necessarily convex, since the tensor $\mathbb{F}$ is not symmetric, and hence it does not satisfy assumption \ref{hypo:1}. Indeed, we recall that the function $-\log(\det(\f X^2))=-2\log(\det(\f X))$ is convex on the set $\mathbb{R}_{sym,+}^{d\times d}$ (see e.g. \cite[Section 3.1.5]{boyd}), while it has no convexity properties on the set $\mathbb{R}_{+}^{d\times d}$.
Unlike the kinematic variable $\mathbb{B}$, which is symmetric as a solution of its transport equation if it is symmetric at the initial time, the transport equations \ref{model10}, \ref{model17} or \ref{model19} do not imply that their solutions have any symmetry. A possible way to proceed is to consider a transport equation for $\mathbb{F}$ of the type \eqref{model9}, i.e.
\begin{equation}\label{NoeHookOldroyd}
    D_t\mathbb{F}=\mathbb{F}\mathbb{W}+a \nabla \f v \mathbb{F}+b(\nabla \f v)^T\mathbb{F},
\end{equation}
choosing the skew tensor $\mathbb{W}$ in such a way that
\[
\left(\mathbb{F}\mathbb{W}+a \nabla \f v \mathbb{F}+b(\nabla \f v)^T\mathbb{F}\right)_{skw}=\MO,
\]
implying that the solution of the transport equation is symmetric if it is symmetric at the initial time. This approach has been employed in \cite{balci}. In the Appendix we will derive the general form for such a $\mathbb{W}$, giving a mechanical and geometrical interpretation to it as (minus) the angular velocity for the system (see \eqref{eqmodb3}), while correspondingly the symmetric solution $\mathbb{F}$ is the square root of $\mathbb{B}$ and contains information about the stretching in the system. 
%We derive the following model
 %\begin{subequations}
%\label{fsim1}     
 %\begin{align}
%\t \f v + ( \f v \cdot \nabla ) \f v + \nabla p - \mu \Delta \f v  - 2(a+b)  \di \left ( \f S^2 \right ) ={}& \f f \,, \quad \di \f v = 0 \,,\label{fsim1v}\\
%\t \f S + \f S W(\f S,\nabla \f v) + ( \f v \cdot \nabla ) \f S - a ( \nabla \f v ) \f S - b ( \nabla \f v ) ^T \f S  + \frac{2}{\mu_p} \left(\f S-\f S^{-1}\right)={}& 0 \,, \quad  ( \f S)_{\skw} = 0 \, .\label{fsim1B}
%\end{align}
%\end{subequations}
%with 
%\begin{equation}
%\label{fsim1W}
%\mathbb{W}(\f S,\nabla \f v)=f(\f S)\f H(\f S,\nabla \f v)-g(\f S)(\f S^2 \f H(\f S,\nabla \f v)+ \f H(\f S,\nabla \f v) \f S^2),
%\end{equation}
%where
%\[
%H(\f S,\nabla \f v):=a[(\nabla \f v)\f S-\f S(\nabla \f v)^T]+b[(\nabla \f v)^T\f S-\f S(\nabla \f v)]
%\]
%and $f(\f S),g(\f S)$ are defined in terms of the linear invariants of $\f S$ as
%\[
%f(\f S):=\frac{I_{\f S}^2-II_{\f S}}{I_{\f S}II_{\f S}-III_{\f S}}, \quad g(\f S):=\frac{1}{I_{\f S}II_{\f S}-III_{\f S}}.
%\]
We observe that, since $\mathbb{W}= \mathbb{W}(\mathbb{F},\nabla \f v)$  depends linearly on $\nabla \f v$ but non linearly on $\mathbb{F}$, it's difficult to fit the theoretical framework to the model obtained with this approach, checking that assumption \ref{hypo:1}, in particular the convexity of the mapping \eqref{ass:convex}, are verified. 
\newline
An alternative and more practicable way to proceed in this direction is to consider the symmetrized
transport equation of \eqref{model17}, i.e. 
% \Rob{In comparison to the previous models, the dissipation is multiplied by $\frac{1}{\mu_p}$ instead of $\frac{1}{2\mu_p}$...}\abramob{Yes but this is due to the fact that now we consider an explicit form for the free energy \eqref{model31} which is quadratic in $\mathbb{F}$, so that the factor $\frac{1}{2}$ cancels...}
\begin{equation}
\label{model32}
D_t \mathbb{F}=\frac{a}{2}\left(\nabla \f v \mathbb{F}+\mathbb{F} (\nabla \f v)^T \right) + \frac{b}{2}\left((\nabla \f v)^T\mathbb{F}+\mathbb{F} \nabla \f v\right)-\frac{1}{\mu_p}\left(\mathbb{F}-\mathbb{F}^{-1}\right),
\end{equation}
in which the solution preserves the symmetry of initial conditions.
We observe that \eqref{model32} corresponds to the generalization \eqref{model10} of the transport equation applied to a symmetric tensor $\mathbb{F}$ of the form (in components)
\[
\mathbb{F}_{ij}=\frac{1}{2}\left(\frac{\partial x_i}{\partial X_j}+\frac{\partial x_j}{\partial X_i}\right),
\]
with $\f x, \f X$ coordinates of the current and the initial configurations respectively. This means that the symmetric mapping $\mathbb{F}$ represents pure shear. Note that for pure shear the angular velocity $\mathbb{W}\equiv \MO$ and in this case the viscoelastic model becomes
 \begin{subequations}
\label{fsim2}     
 \begin{align}
\t \f v + ( \f v \cdot \nabla ) \f v + \nabla p - \mu \Delta \f v  - \alpha  \di \left ( \mathbb{F}^2  \right ) ={}& \f f \,, \quad \di \f v = 0 \,,\label{fsim2v}\\
\t \mathbb{F} +  ( \f v \cdot \nabla ) \mathbb{F} -2 [(\nabla \f v)_{\skw} \mathbb{F}]_{\sym} - \alpha [  ( \nabla \f v )_{\sym} \mathbb{F}]_{\sym}  + \frac{1}{\mu_p}  (\mathbb{F} - \mathbb{F}^{-1}) ={}& \MO \,, \quad  ( \mathbb{F})_{\skw} = \MO \, .\label{fsim2F}
\end{align}
\end{subequations}

The corresponding Oldroyd-B model to \eqref{fsim2}, which is formally derived from~\eqref{fsim2} by multiplying~\eqref{fsim2F} by $\bF$ and defining $\bB=\mathbb F^2$, is the model
 \begin{subequations}
\label{bsim2}     
 \begin{align}
\t \f v + ( \f v \cdot \nabla ) \f v + \nabla p - \mu \Delta \f v  - \alpha  \di \mathbb{B} ={}& \f f \,, \quad \di \f v = 0 \,,\label{bsim2v}\\
\t \mathbb{B} +  ( \f v \cdot \nabla ) \mathbb{B} -{2}[ (\nabla \f v)_{\skw} \mathbb{B}]_{\sym}- \alpha [  ( \nabla \f v )_{\sym} \mathbb{B}]_{\sym} \qquad\qquad \\  -\alpha \sqrt{\mathbb{B}}(\nabla \f v)_{sym}\sqrt{\mathbb{B}}+ \frac{{1}}{\mu_p}  (\mathbb{B} - \MI) ={}& \MO \,, \quad  ( \mathbb{B})_{\skw} = \MO \, .\label{bsim2B}
\end{align}
\end{subequations}
We want to remark that the described equivalence of the model is only formal. We can only show the existence of an \textit{energy-variational solution} to the system~\eqref{fsim2} in Theorem~\ref{thm:visco} and not the necessary additional regularity of this solutions  in order to infer the existence of an \textit{energy-variational solution} to the system~\eqref{bsim2}. Nevertheless, the solutions of the systems should exhibit similar phenomena. 
% \betti{It is not clear here if we can include this in our analysis}
%%%%%%%%%%
% }
\section{Oldroyd-B model for viscoelastic fluid with regularized energy\label{sec:Mal}}
\label{models}
 In this section, we want to apply the abstract result contained in Theorem~\ref{thm:envar}  to system~\eqref{mal1} describing viscoelastic fluids with a regularized energy, which was  proposed in~\cite{Malek}. 
% 
 % \betti{Should we refer here to the systems introduced in the previous section?}
 % 
 % It is similar  to the standard Oldroyd-B model, but the energy function is regularized.
 % Therefore, we consider the system,
 We repeat the system here for convenience,
  \begin{subequations}
\label{sysvisco2}     
 \begin{align}
\t \f v + ( \f v \cdot \nabla ) \f v + \nabla p - \mu \Delta \f v  - \alpha  \di \left ( (1-\beta) (\bF-\MI)  +\beta ( \bF^2 - \bF) \right ) ={}& \f f \,, \quad \di \f v = 0 \,,\label{eq1sym2}\\
\t \bF +  ( \f v \cdot \nabla ) \bF -2[ (\nabla \f v)_{\skw} \bF]_{\sym} - \alpha   [  ( \nabla \f v )_{\sym} \bF]_{\sym}  + \bF - \MI + \delta ( \bF^2 - \bF) ={}& \MO \,, \quad  ( \bF)_{\skw} = \MO \, ,\label{eq2sym2}
\end{align}
with $\beta\in (0,1)$, $\alpha \in \R\backslash \{0\}$, and$\delta,\,\mu \in(0,\infty)$.
This system  is equipped with initial and boundary conditions, 
\begin{align}
    \f v(0) = \f v_0 \,, \quad \bF(0) = \bF_0 \quad \text{in } \Omega\\
    \f v  = 0 \quad \text{on } \partial\Omega  \times \cltime\,. \label{eq:bound}
\end{align}
 \end{subequations}
%  with $  \mathcal{F}(\bF) := 
%     \tr(\bF-I-\log(\bF)) + \frac{\gamma}{2}| \bF-I|^2$ such that $ D \mathcal{F}(\bF) = I - \bF^{-1} + \gamma(\bF -I) 
% $.  
% The unknowns are  the velocity field $\f v : \ov\Omega \times (0,T) \ra \R^d $, the pressure $p:\ov\Omega \times (0,T) \ra \R$,  as well as the stress tensor $\bF:\ov\Omega \times (0,T) \ra \R^{d\times d}_{\sym}$. 
% The constant $\beta\in (0,1)$ can be seen an interpolation variable, interpolating between the standard Oldroyd-B energy, in the case $\beta=0$ and the quadratic regularization for $\beta =1$.  
% The constants $\mu \in (0,\infty)$, $\alpha \in\R$, and $\delta \in [0,\infty)$  denote parameters describing the viscocity, the polymer solvent interactions, and the dissipation, respectively. 
\begin{remark}[Boundary conditions]
In comparison to the model proposed in~\cite{Malek}, we delete the stress diffusion and chose certain constants to be 1. Moreover, we choose homogeneous Dirichlet boundary conditions for the velocity field in comparison to the following Navier-slip boundary conditions in~\cite{Malek},
    \begin{align}
        \f v\cdot \f n  = 0\,, \quad - \f v _\tau= \left( \left( 2\mu(\nabla \f v )_{\sym} + \alpha ((1-\beta) (\bF-\MI)  +\beta (\bF^2-\bF) )  \right) \cdot \f n \right)_{\tau} \quad \text{on } \partial\Omega  \times \cltime\,.
    \end{align}
    The handling of these boundary conditions is somehow standard and we decided to simplify the model in this regard for the readers' convenience. However, the same results hold true with some obvious changes for the generalized formulation in terms of additional boundary terms and the underlying spaces.  
\end{remark}

% \Rob{Discussion of the model. }

We define the space $ \V $ to be given  by $\V := \Ha(\Omega)\times L^2_{\sym}(\Omega)$ and we define the energy $\E : \V \ra [0,\infty)$ via
\begin{align}\label{visco:energy2}
    \E(\f v , \bF) : = \begin{cases}
        \left[ \frac{1}{2}\lVert \f v\rVert^ 2_{L^2(\Omega)} +\int_\Omega   (1-\beta)  \tr(\bF-\MI-\log(\bF)) + \frac{\beta}2| \bF-\MI|^2  \de  x\right] & \text{if }\bF \in L^2_{\sym,+}(\Omega)\\
        \infty & \text{else}
    \end{cases} \,.
\end{align}
We define the space $\Y$ to be 
\begin{align}\label{spaceY}
\Y := 
% \{ (\vv , \bft) \in 
(W^{1,\infty} \cap \Hsig)(\Omega; \R^d ) \times (L^\infty\cap W^{1,3})(\Omega; \R^{d\times d}_{\sym})
% \text{  with }\det \bft \geq  b\in (0,\infty)  
% \di \vv =0
% \text{ in }\Omega\text{ and } \vv = 0 \text{ on }\partial\Omega 
% \} 
\,.
\end{align}

% \betti{Should we replace $(0,T)$ by $(0,\infty)$ here?}\Rob{Yes! Hopefully, I got every occurence now... But, I changed my mind in the sense that the weak-strong uniqueness result is now formulated on a finite time-horizon.}

\begin{definition}\label{def:visco2}
We call the triple $$(\f v, \bF, E)\in L^\infty(\otime;\Ha(\Omega))\cap L^2(\otime;\Hsig(\Omega)) \times L^\infty(\otime;L^2_{\sym,+}(\Omega)) \times \BVl$$ an \textit{energy-variational solution} to \eqref{eq1sym2}--\eqref{eq1sym2} with the initial values $ \f v_0\in \Ha(\Omega)$ and $\bF_0\in L^2_{\sym,+}(\Omega) $, if $ \mathcal{E}(\f v, \bB) \leq E $ and
\begin{align*}
&\left[E - ( \f v , \vv) - ( \bF , \bft )  \right]\Big|_s^t -  \int_s^t 
% \Rand{\f v}{\f v- \vv} -
\langle \f f , \f v - \vv \rangle \de \tau \\&+ \int_s^t ( \f v , \t \vv ) + ( \bF , \t \bft)  + ( \f v\otimes \f v- \mu \nabla \f v-\alpha ((1-\beta)(\bF - \MI )+ \beta  (\bF^2-\bF)  )  ; \nabla \vv) \de \tau &\\
&+ \int_s^t   (( \f v \cdot\nabla  )\bft ;\bF) + 2(  (\nabla \f v)_{\skw} \bF ;  \bft) +\alpha ( (\nabla \f v)_{\sym} \bF ; \bft) -  \left( \bF -\MI + \delta  (\bF^2- \bF) ; \sigma \right) \de \tau &
\\
&+ \int_s^t \mu ( \nabla \f v , \nabla \f v)+ \left( (1-\beta)  ( \MI - \bF^{-1}) + (\beta +\delta(1-\beta)) (\bF-\MI) + \delta \beta \bF (\bF-\MI); \bF-\MI
%(I - \bF^{-1}+ \gamma ( \bF -I) )
\right)
%+ \int_{\partial \Omega } \mathcal{T}(\f v) \cdot \mathcal{T}(\f v ) \de S
\de \tau  \\
&+ \int_s^t
 \mathcal{K}( \vv , \bft )  \left( \mathcal{E}(\f v , \bB ) - E \right) \de \tau &\leq 0 \,.
\end{align*}
holds for a.e.~$t>s\in(0,\infty)$ including $s=0$ with $\f v(0)=\f v_0$ as well as $ \bB(0)=\bB_0$ and all  $ (\vv , \bft)\in \C^1 (\cltime;\Y ) $,
% with $\di \vv =0 $ in $\Omega \times (0,T)$, $ \vv = 0 $ on $\partial\Omega\times[0,T]$, 
where the regularity weight $\mathcal{K}$ is given by
\begin{align}\label{KB}
\begin{split}
    \mathcal{K}(\vv , \bft) &= 2\max\{1,\alpha\}\| (\nabla \vv)_{\sym,-} \|_{L^\infty(\Omega ; \R^{d\times d})} + \frac{C^2}{\beta \mu}\| \nabla \bft\|_{L^3(\Omega)}^2 \\&\quad+ \frac{(2+\alpha)^2}{
4\mu\beta}\| \bft\|_{L^\infty(\Omega)}^2 + \frac{2(\beta+\delta-3\delta\beta)_-}{\beta} \,.
\end{split}
\end{align}
\end{definition}
The first result regards existence of \textit{energy-variational solutions} for \eqref{eq1sym2}--\eqref{eq2sym2} as well as their weak-strong uniqueness.

\begin{theorem}\label{thm:visco2}
For every $ \f v _0 \in \Ha$, $ \bF_0 \in L^2_{\sym,+}$ with $\ln \det \bF_0  \in L^1(\Omega)$  and $ \f f \in L^2(\otime ; (\Hsig)^*)$  there exists an \textit{energy-variational solution} in the sense of Definition~\ref{def:visco2} with $ \E(\f v_0, \bF_0) = E(0)$.  

% \Rob{Here $ (\tv , \tbF)$  is rather a weak solution, since we have no $H^2$ regularity in space... But from the regularity assumptions, it is not clear that the solution has a decay such that the abstract regularity assumptions are fulfilled... Maybe formulate the weak-strong uniqueness here on a finite time horizon?  }
Let $ (\tv , \tbF)$ be a weak solution to~\eqref{sysvisco2} with 
\begin{align}\label{regStrongM}
\begin{split}
        \tv & \in L^\infty(\otimeT;W^{1,\infty}(\Omega)) 
        % \cap L^2(\otime; \Hsig) 
        \cap C^1(\cltimeT; \Ha(\Omega))\\
    \tbF &\in L^\infty (\otimeT;(W^{1,3}\cap L^\infty)( \Omega; \R^{d\times d}_{\sym,+})) \cap C^1(\cltimeT; L^2_{\sym}(\Omega)) 
\end{split}
\end{align}
such that there exists a $b\in(0,1)$ with $ \det \tbF \geq b$ for a.e.~$(x,t)\in\Omega \times (0,T)$
% $ (\tv , \tbF)\in  L^1(0,T;W^{1,\infty}(\Omega)) \cap L^2(0,T;H^2(\Omega)\cap \Hsig) \times  $  with $\di \tv =0 $ in $\Omega\times[0,T]$ and $ \tv\cdot \f n  = 0 $ on $\partial\Omega\times[0,T]$, 
as well as $ (\tv(0),\tbF(0))=(\f v_0,\bF_0) $. Then it holds that 
$$
\f v = \tv \quad\text{and}\quad \bF = \tbF \quad \text{ for all }(x,t)\in\Omega \times \otimeT\,.
$$
\end{theorem}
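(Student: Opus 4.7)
The plan is to invoke the abstract results already established, namely Theorem~\ref{thm:envar} for existence and Corollary~\ref{cor:weakstrong} for weak-strong uniqueness, so the task reduces to verifying Hypotheses~\ref{hypo:1} and~\ref{hypo:2} for the concrete data of~\eqref{sysvisco2}. I set $\f U=(\f v,\bF)$ on the space $\V=\Ha(\Omega)\times L^2_{\sym}(\Omega)$ with $\Y$ as in~\eqref{spaceY} and $\W=\Hsig(\Omega)\times L^4_\sym(\Omega)$. The energy $\E$ is given by~\eqref{visco:energy2}, the dissipation by the functional $\Psi_Q$ from the introduction, and $A$ is read off the weak form in Definition~\ref{def:visco2}. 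Strict convexity, lower semi-continuity and superlinearity of $\E$ follow because $\bF\mapsto\tr(\bF-\MI-\log\bF)$ is strictly convex on $\R^{d\times d}_{\sym,+}$ and the $\beta$-quadratic part supplies superlinear growth in $L^2_\sym$. The compatibility~\eqref{Adiss} is a direct chain-rule computation: testing the $\f v$-equation by $D\E^*$'s velocity component and the $\bF$-equation by its matrix component reproduces exactly the four integrands in $\Psi_Q$, using $\di\f v=0$ and symmetry of $\bF$.

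The crucial and only nontrivial step is the convexity condition~\eqref{ass:convex}, which forces the concrete choice~\eqref{KB}. In the functional $\f U\mapsto\Psi(\f U)-\langle A(\f U),(\vv,\bft)\rangle+\mathcal{K}(\vv,\bft)\E(\f U)$ the manifestly convex contributions are $\mu|\nabla\f v|^2$, $(\beta+\delta(1-\beta))|\bF-\MI|^2$, $\delta\beta\,\bF{:}(\bF-\MI)^2$ and the term $(1-\beta)\bF{:}(\MI-\bF^{-1})^2$; the ``bad'' terms to be absorbed by $\mathcal{K}(\vv,\bft)\E$ are the transport/stretching couplings $\f v\otimes\f v{:}\nabla\vv$, $((\f v\cdot\nabla)\bft){:}\bF$, $((\nabla\f v)_{\mathrm{skw}}\bF){:}\bft$, $\alpha((\nabla\f v)_{\sym}\bF){:}\bft$, and $\delta(\bF^2-\bF){:}\bft$. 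Each is treated by a Young inequality tuned so that (i) the $|\f v|^2$-part of $\E$ controls convective terms via $\|(\nabla\vv)_{\sym,-}\|_{L^\infty}$; (ii) the $|\bF-\MI|^2$-part controls the $\bft$-couplings to $\bF$ with weights $C^2(\beta\mu)^{-1}\|\nabla\bft\|_{L^3}^2$ (using Hölder plus a Sobolev embedding to gain an $L^6$-bound on $\f v$ from the $\mu|\nabla\f v|^2$ dissipation) and $(2+\alpha)^2(4\mu\beta)^{-1}\|\bft\|_{L^\infty}^2$; (iii) the Giesekus-type contribution $\delta(\bF^2-\bF){:}\bft$ is split as $\delta\beta\,\bF{:}(\bF-\MI)^2+(\beta+\delta-3\delta\beta)|\bF-\MI|^2$ by expansion, yielding the last summand in~\eqref{KB} when the coefficient turns negative. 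Once this is in place, Theorem~\ref{thm:envar} produces an energy-variational solution with $E(0+)=\E(\f v_0,\bF_0)$; the condition $\log\det\bF_0\in L^1(\Omega)$ is needed only to ensure $\E(\f v_0,\bF_0)<\infty$ so that $\f U_0\in\dom\E$.

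For weak-strong uniqueness I would verify Hypothesis~\ref{hypo:2}: $\E$ is $C^2$ on $\mathrm{int}\,\dom\E$ because $\bF\mapsto-\log\det\bF$ is smooth on $\R^{d\times d}_{\sym,+}$, and the lower bound $\det\tbF\geq b>0$ of the strong solution keeps its trajectory inside this interior with a uniform $D^2\E(\tbF)$-bound. The dissipation-to-$\W$ estimate follows from $\mu|\nabla\f v|^2$ dominating the $\Hsig$-norm and $\delta\beta\,\bF{:}(\bF-\MI)^2$ controlling $\bF$ in $L^4$ up to lower order energy terms. Gâteaux differentiability of $A$ and $\Psi$ in $\f U$ is straightforward from the polynomial structure. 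One then checks the five regularity items in~\eqref{abstractreg} using~\eqref{regStrongM}: $D\E(\tbF)=(1-\beta)(\MI-\tbF^{-1})+\beta(\tbF-\MI)\in L^\infty(0,T;\Y)$ thanks to $\tbF,\tbF^{-1}\in L^\infty$ and $\nabla\tbF\in L^3$; the remaining memberships are routine products of $L^\infty$ and $L^p$ factors in the $\tv,\tbF$-regularity. Applying Proposition~\ref{prop:rel} with $\tU=(\tv,\tbF)$, all terms involving $\partial_t\tU+A(\tU)$ vanish because $\tU$ solves~\eqref{sysvisco2} strongly, and Corollary~\ref{cor:weakstrong} together with $E(0)=\E(\f v_0,\bF_0)=\E(\tv(0),\tbF(0))$ and a Gronwall argument using $\mathcal{K}(D\E(\tbF))\in L^1(0,T)$ forces $\mathcal{R}(\f U,E|\tU)\equiv 0$, hence $(\f v,\bF)=(\tv,\tbF)$ on $[0,T]$. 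The main technical obstacle throughout is the bookkeeping of the cross-term estimates entering~\eqref{KB}; the conceptual structure is entirely dictated by the abstract framework.
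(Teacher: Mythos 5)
Your overall strategy — instantiating the abstract framework, verifying Hypotheses~\ref{hypo:1} and~\ref{hypo:2}, then invoking Theorem~\ref{thm:envar}, Proposition~\ref{prop:rel}, and Corollary~\ref{cor:weakstrong} — is exactly the paper's, and most of the bookkeeping you outline (Fenchel conjugate, the cancellations behind~\eqref{Adiss}, the Young-inequality absorptions using the $L^6$-embedding of $\Hsig$) is correct in spirit. However, two of your concrete choices would fail as stated. First, your choice $\W=\Hsig\times L^4_{\sym}$ cannot satisfy the dissipation-to-$\W$ inequality in Hypothesis~\ref{hypo:2} with $p=2$: the Giesekus term $\delta\beta\,\bF:(\bF-\MI)^2$ in the potential $\chi$ is cubic in $\bF$ (leading part $\tr(\bF^3)$), so the dissipation gives $L^3_x$ control, and there is no interpolation between the $L^\infty_t L^2_x$ energy bound and the $L^3_{t,x}$ dissipation bound that yields $L^2_t L^4_x$. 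The paper takes $\W=\Hsig\times L^3_{\sym}$, for which the required estimate follows from $\tr(\bF^3)\geq c_d\lvert\bF\rvert^3$ and Young's inequality in time.

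Second, you misattribute the last summand $\tfrac{2(\beta+\delta-3\delta\beta)_-}{\beta}$ in $\mathcal{K}$ to absorbing the operator cross-term $\delta(\bF^2-\bF):\bft$. In fact this contribution arises from the dissipation potential $\chi$ itself: after testing the stress part of $\chi$ against $\bF-\MI$ and expanding, one finds a quadratic term $(\beta+\delta-3\delta\beta)\lvert\bF-\MI\rvert^2$ whose coefficient may be negative, and convexity of the resulting map $\bF\mapsto\mathcal{F}(\bF)$ in~\eqref{convexA3} is restored by adding $(\beta+\delta-3\delta\beta)_-\lvert\bF-\MI\rvert^2$, which costs exactly $\tfrac{2(\beta+\delta-3\delta\beta)_-}{\beta}\,\E(\f v,\bF)$ via the quadratic part of the energy. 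The genuine cross-term $-\delta\left((\bF-\MI)^2;\bft\right)$ is a separate contribution, controlled by an estimate of the type $\tfrac{2\delta}{\beta}\lVert(\bft)_-\rVert_{L^\infty}\E(\f v,\bF)$ and not by the $(\beta+\delta-3\delta\beta)_-$ term. The remaining ingredients you describe (strict convexity and superlinearity of $\E$, $C^2$-regularity on the interior of $\dom\E$, Gateaux differentiability of $A$ and $\Psi$, and the Gronwall closure yielding $\mathcal{R}\equiv 0$) are correct and match the paper.
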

\begin{remark}
    We also want to stress that the assertions of Corollary~\ref{cor:reg} hold since $\E$ given in~\eqref{visco:energy2} fulfills the $\rho$-uniform convexity assumption with $\rho(x) = \frac{1}2{x^2}$. This also shows that the initial values are actually attained in the strong sense. 
\end{remark}
\begin{proof}
In order to prove Theorem~{\ref{thm:visco2}}, we want to apply Theorem~\ref{thm:envar}. Therefore, we have to show that Hypothesis~\ref{hypo:1} is fulfilled. 

As above, $\V := \Ha\times L^2_{\sym}$ and the energy is given by~\eqref{visco:energy2}.
The dissipation potential $\Psi : \V \ra [0,\infty]$ is given via the potential
$$\chi(\f v, \bF) := \mu\lvert \nabla \f v \rvert^2  + \tr\left(
(1-\beta) \bF  ( \MI - \bF^{-1})^2 + (\beta +\delta(1-\beta)) (\bF-\MI)^2 + \delta \beta \bF (\bF-\MI)^2\right) 
$$
such that
\begin{align}
    \label{visco:diss2}
    \diss(t,\f v , \bF) = \begin{cases}
        \int_\Omega \chi(\f v,\bF)  \de x - \langle \f f(t) , \f v \rangle & \text{if }\f v \in \Hsig \text{ and }\bF \in L^3_{\sym,+} \text{ with }\bF^{-1}\in L^1_{\sym,+}  \\
        \infty & \text{else}
    \end{cases}\,.
\end{align}
For the lower bound of the dissipation potential, we may choose $ C_{\Psi}(t) := \frac{1}{2\mu}  \| \f f \|_{(\Hsig)^*}^2$. Indeed, via Young's inequality we can estimate  
\begin{multline*}
    \Psi (t,\f v,\bF) \geq \mu \| \nabla \f v \|_{L^2}^2 - \langle \f f(t) , \f v \rangle \geq \mu \| \nabla \f v \|_{L^2}^2 - \| \nabla \f v \|_{L^2}\| \f f \|_{(\Hsig)^*} \\\geq \frac{\mu}{2} \| \nabla \f v \|_{L^2}^2  -\frac{1}{2\mu}  \| \f f \|_{(\Hsig)^*}^2 \geq    -C_{\Psi}(t) \,.
\end{multline*}
Note that the regularity of $\f f \in L^2(\otime;(\Hsig)^*)$ is essential for $\diss$ to be well-defined and lower semi-continuous.

In order to check the next assumptions, we need to calculate the convex conjugate of the energy $\E$. The subdifferential is single-valued on its domain and is given by  
\begin{align}\label{subEM}
\partial \E : \V \ra 2^{\V^*} \qquad
    \partial \E( \f v , \bF) =\begin{pmatrix}
        \f v \\ (1-\beta)(\MI - \bF^{-1}) + \beta ( \bF - \MI) 
    \end{pmatrix}\,.
\end{align}
 Its inverse gives the subdifferential of the convex conjugate according to Fenchels equivalences, which is  single-valued on $\V^*= \V $ and given by 
\begin{align}\label{subEstarM}
    \partial \E^* : \V^* \ra 2^{\V} \qquad
    \partial \E^*(\vv,\bft) = \begin{pmatrix}
        \vv\\
        \f B(\bft) 
        % \frac{1}{2\gamma}\left ( \sigma + ( \gamma -1) I +\sqrt{\bft^2 + 2(\gamma-1)\bft+ ( \gamma+1)^2I}\right ) 
    \end{pmatrix} \,
    \hbox{ when $\bft \in L^2_{\sym,+}$ and $(\vv , \f 0)^T$ else.} 
\end{align}
Here,  we used the definition 
\begin{equation}\label{Bdef}
    \f B ( \bft) =  \frac{\bft-(1-2\beta) \MI}{2\beta} + \sqrt{\frac{(\bft-(1-2\beta)\MI)^2}{4\beta^2}+ \frac{(1-\beta)}{\beta}\MI }\,,
\end{equation}
  which comes from the fact that  $\bft$
solves the matrix equation $\bft = (1-\beta)(\MI - \bF^{-1}) + \beta ( \bF -\MI) $ and as a consequence of the positive definiteness of $\bF$, it holds  $$ 0=\beta \bF ^2 - (\bft- (1-2\beta)\MI) \bF - (1-\beta) \MI \,.$$
  The plus sign in front of the square root in the definition of $\f B$ reflects the fact that $\partial \E^*$ should map into the domain of $\E$. 
We note that the derivative $D\E^*(\vv,\bft) \in \dom\diss$ for all $ (\vv,\bft)\in\Y$, which follows from the fact that $\vv \in \Hsig$ and $ \bft \in L^2_{\sym,+} $ implying that the second component of $ \partial \E^*$ belongs to $ L^2_{\sym,+}$.   

% This might be observed by the identity $ \partial \E \circ \partial \E^*( \vv , \bft) = (\vv,\bft)^T$. 
The operator $A:\dom\diss \ra \Y^*$ is given by all terms in the equations~\eqref{sysvisco2} despite the time derivative, such that
\begin{align}\label{Asys2}
\begin{split}
        \langle A(t,\f v, \bF), (\vv,\bft)\rangle = &
  \mu \langle \nabla \f v, \nabla \vv\rangle - \left( \f v \otimes \f v -\alpha ((1-\beta)(\bF - \MI )+ \beta  (\bF^2-\bF)  )  ; \nabla \vv\right)  \\ &- \langle \f f (t), \vv\rangle 
   - \left( \bF \otimes \f v \dreidotkom \nabla \bft\right) - \left( ( \nabla \f v )_{\skw} \bF; \bft\right)\\& - \alpha \left( ( \nabla \f v)_{\sym} \bF; \bft\right) + \left( \bF -\MI + \delta  (\bF^2- \bF), \bft\right) \,.
\end{split}
\end{align}
We need to show that~\eqref{Adiss} is fulfilled for $A$ and $\diss$ as given above, \textit{i.e.,}
$$
\left\langle A( t,D \E^*(\vv , \bft)) ; \begin{pmatrix}
\vv\\\bft
\end{pmatrix}\right \rangle = \Psi(t,D \E^*(\vv,\bft)).
$$
In order to verify this assumption, we insert $\f v= \vv$ and $\bF = \f B(\bft)$ in~\eqref{Asys2}. 
This calculation resembles the usual energy estimate. 
 Since, $\vv$ is a solenoidal vector field, the convection term vanishes, $ ( \vv \otimes \vv ;\nabla \vv)=0$. From the identity $$ 
  (1-\beta) (\f B(\bft) - \MI) +\beta ((\f B(\bft)) ^2- \f B(\bft)) - \bft\f B(\bft) =  \f 0\,,$$ we find that 
 \begin{multline*}
      \alpha (  ( 1-\beta)(\f B(\bft)- \MI) +\beta(\f B^2(\bft) - \f B(\bft))  ;\nabla \vv) - \alpha (( \nabla \vv )_{\sym} \f B(\bft); \bft)\\ = \alpha (   (1-\beta) (\f B(\bft) - \MI)+\beta (\f B^2(\bft) - \f B(\bft))  - \bft\f B(\bft); (\nabla \vv)_{\sym})=  0 \,.
 \end{multline*}
 Furthermore, we observe by an integration by parts and expressing $\bft$ in terms of $\f B(\bft)$ that
 \begin{align*}
 -& ( \f B(\bft)\otimes \vv \dreidotkom \nabla \bft) -2 ( ( \nabla \vv)_{\skw}\f B(\bft);\bft) 
 \\&= ( ( \vv \cdot \nabla ) \f B(\bft) ;(1-\beta) (\MI-\f B^{-1}(\bft)) + \beta( \f B(\bft) - \MI) ) \\& \quad -2 ( ( \nabla \vv)_{\skw}\f B(\bft) ; (1-\beta) (\MI-\f B^{-1}(\bft)) + \beta( \f B(\bft) - \MI) ) 
 \\&= \int_\Omega(1-\beta) ( \vv \cdot \nabla )\tr\left ( \f B(\bft) - \ln (\f B ( \bft))\right )+ \frac{\beta}{2}  ( \vv \cdot \nabla ) | \f B(\bft)- \MI|^2\de x \\ & \quad   -2\left( ( \nabla \vv)_{\skw} ; (1-\beta) (\f B(\bft) - \MI)+ \beta( \f B^2(\bft) - \f B(\bft))  \right) = 0\,.
 \end{align*}
 The first line on the right-hand side vanishes since $\vv$ is a solenoidal vector field and the second one since  it is a Frobenius product of a skew-symmetric and symmetric matrix. 
Using all these cancellations, we find  for the remaining dissipative terms
\begin{align*}
%  \left\langle A( D \E^*(\vv , \bft)) ; \begin{pmatrix}
% \vv\\\bft
% \end{pmatrix}\right \rangle 
% = {}&
  \langle A(t,\vv, \f B(\bft)), (\vv,\bft)^T\rangle  ={} & \int_\Omega \chi(\vv, \f B(\bft)) \de x  - \langle \f f (t), \vv\rangle = \diss ( t,\vv , \f B(\bft))\,
\end{align*} 
 for all $ (\vv,\bft)\in \Y$. 
 Due to~\eqref{subEstarM}, this calculation implies that assumption~\eqref{Adiss} is fulfilled. 
 The continuity of the function $\f B$ in $\bft$ implies the continuity of the function $\Psi \circ D\E^*$  in the topology of $\Y$.
 Since  $\chi$ is a continuous function in $\bF$, the combination of continuous functions is continuous and the boundedness allows to deduce continuity of $\Psi \circ D\E^*$ by Lebegue's theorem on dominated convergence. 
We also  observe from \eqref{visco:energy2} and \eqref{visco:diss2} that at their global minimum $\E(\f v_{\min} , \bF_{\min})=\diss(\f v_{\min} , \bF_{\min})=0$ with $ \f v_{\min}= \f 0$ and $\bF_{\min} = \MI$. 

The final condition, we need to verify is~\eqref{ass:convex}. 
Therefore, we observe the following estimates
\begin{align*}
    \left( \f v \otimes \f v - \alpha\beta(\bF-\MI)^2 ; \nabla \vv\right)={}&
    \left( \f v \otimes \f v - \alpha\beta (\bF-\MI)^2 ; (\nabla \vv)_{\sym}\right)
    \\
%        ={}& \left( \f v \otimes \f v - \alpha\beta \bF^2 ; (\nabla \vv)_{\sym,+}\right)+    \left( \f v \otimes \f v + \alpha\bF^2 ; (\nabla \vv)_{\sym,-}\right)
% \\
    \leq{}& \left(\| \f v \|_{L^2(\Omega)}^2 + \alpha\beta \| \bF-\MI\|_{L^2(\Omega)}^2 \right) \| (\nabla \vv)_{\sym} \|_{L^\infty(\Omega ; \R^{d\times d})} 
 \\   \leq{}&  \| (\nabla \vv)_{\sym} \|_{L^\infty(\Omega ; \R^{d\times d})} 2\max\{1,\alpha\}\E(\f v ,\bF)
 \intertext{and}
  \left( \bF \otimes \f v \dreidotkom \nabla \bft\right)= \left( (\bF-\MI) \otimes \f v \dreidotkom \nabla \bft\right) \leq{}& \lVert \f v\rVert _{L^6(\Omega)} \sqrt{\beta}\| \bF-\MI\|_{L^2(\Omega)} \frac{1}{\sqrt{\beta}}\| \nabla \bft\|_{L^3(\Omega)}
  \\
  \leq{}& \sqrt{{\mu}{}}
  \lVert \nabla \f v\rVert _{L^2(\Omega)} \sqrt{\beta}\| \bF-\MI\|_{L^2(\Omega)} \frac{C}{\sqrt{\beta\mu}}\| \nabla \bft\|_{L^3(\Omega)}
  \\
  \leq{}&\frac{\mu}{2}\| \nabla \f v\|_{L^2(\Omega)}^2+ \frac{C^2}{\beta \mu}\| \nabla \bft\|_{L^3(\Omega)}^2 \E(\f v,\bF) \,,
  \end{align*}
where we used that $ ( \MI \otimes \f v \dreidotkom \nabla \bft) = (\f v, \nabla \tr(\bft)) = 0 $. 
Furthermore, we obtain, by using the equations  $2 \| ( \nabla \f v )_{\skw}\|_{L^2(\Omega)}= \| \nabla \f v \|_{L^2(\Omega)}^2 = 2 \| ( \nabla \f v )_{\sym}\|_{L^2(\Omega)} $ for all $\f v \in \Hsig$, that
\begin{align*}
-2\left( ( \nabla \f v )_{\skw} (\bF-\MI); \bft\right)& - \alpha \left( ( \nabla \f v)_{\sym} (\bF-\MI); \bft\right) \\&\leq \left(2 \| ( \nabla \f v)_{\skw}\|_{L^2(\Omega)}+ \alpha \| ( \nabla \f v )_{\sym} \|_{L^2(\Omega)} \right)\| \bF-\MI\| _{L^2(\Omega)} \| \bft\|_{L^\infty(\Omega)} \\&\leq \frac{1}{2}( 2 +  \alpha ) \| \nabla \f v \|_{L^2(\Omega)} \| \bF-\MI\| _{L^2(\Omega)} \| \bft\|_{L^\infty(\Omega)} 
\\&\leq \frac{\mu}{2} \|\nabla \f v \|_{L^2(\Omega)} ^2 + \frac{(2+\alpha)^2}{
4\mu\beta}\| \bft\|_{L^\infty(\Omega)}^2 \E(\f v ,\bF) \,.
\end{align*}
Moreover, we observe that
\begin{align}
    -\delta \left(  (\bF - \MI)^2;\bft\right) \leq \frac{2\delta}{\beta}\| (\bft)_{-}\|_{L^\infty(\Omega)} \E(\f v,\bF).
\end{align}
Notice that we always manipulated the appearing terms of $\bF$ by subtracting the identity in order to have a good $L^2$ estimate in terms of the energy. The additional terms only give linear contributions and do not affect the analysis.

These three inequalities imply that the mapping 
\begin{align}\label{convexA1}
\begin{split}
        ( \f v ,\bF) \mapsto & \mu \| \nabla \f v \|_{L^2(\Omega)}^2 + \left( \f v \otimes \f v - \alpha\beta(\bF-\MI)^2 ; \nabla \vv\right) + \left( (\bF-\MI) \otimes \f v \dreidotkom \nabla \bft\right)\\ &+2\left( ( \nabla \f v )_{\skw} (\bF-\MI); \bft\right) + \alpha \left( ( \nabla \f v)_{\sym} (\bF-\MI); \bft\right)
    \\ &+\left( 2\max\{1,\alpha\}\| (\nabla \vv)_{\sym,-} \|_{L^\infty(\Omega ; \R^{d\times d})} + \frac{C^2}{\beta \mu}\| \nabla \bft\|_{L^3(\Omega)}^2\right)  \E(\f v ,\bF)
 \\ &+\left(  \frac{(2+\alpha)^2}{
4\mu\beta}\| \bft\|_{L^\infty(\Omega)}^2+\frac{2\delta}{\beta}\| (\bft)_{-}\|_{L^\infty(\Omega)} \right)  \E(\f v ,\bF)
\end{split}
\end{align}
is nonegative. 
Moreover, since it is quadratic, it is also a convex mapping.
% The same holds for $ \f v \mapsto \Rand{\f v}{\f v}$. 
 The linearity of the mapping
 \begin{align}\label{convexA2}
 \begin{split}
       ( \f v, \bF) \mapsto & - \mu \langle \nabla \f v , \nabla \vv \rangle + \alpha(  (1-2 \beta) (\bF-\MI)  ; \nabla \vv) + \alpha  \left( ( \nabla \f v)_{\sym} ; \bft\right)
  % - \Rand{\f v}{\vv} 
  \\&- \langle \f f , \f v - \vv \rangle - ( (1-\delta)(\bF- \MI); \bft) 
  % \\
  % \geq -\frac{\mu}{6} \| \nabla \f v \|_{L^2(\Omega)}^2 - \frac{3\mu}{2} \| \nabla \vv \|_{L^2(\Omega)} - 
 \end{split}
 \end{align}
 assures its convexity and therewith also its weakly-lower semi-continuity~\cite{ioffe}.

	For the remaining dissipative terms, we find
\begin{align*}
   & \left( (1-\beta)  ( \MI - \bF^{-1}) + (\beta +\delta(1-\beta)) (\bF-\MI) + \delta \beta \bF (\bF-\MI)\right): ( \bF-\MI)
\\
% =  \tr\left(  (1-\beta) ( \bF - 2\MI  + \bF^{-1}) + 
% (\beta +\delta) (\bF -\MI)^2 +  \delta \beta (\bF -\MI)^3 
% \right)
% \\
&=  \tr\left(  (1-\beta)  \bF^{-1}- ( 1- \beta (1-\delta))\MI + (1-\beta - 3 \delta \beta) B + (\beta + \delta - 3 \delta \beta) (B-\MI)^2 + \delta \beta \bF^3
\right)
\,.
\end{align*}
Defining the mapping    $\mathcal{F}: \R^{d\times d} _{\sym,+} \ra [0,\infty) $ via 
\begin{align}\label{convexA3}
\begin{split}
        \mathcal{F}(\bF)   : ={}& (1-\beta)\tr\left(    \bF^{-1}) \right) - ( 1- \beta (1-\delta))d + (1-\beta - 3 \delta \beta) \tr(\bF) + (\beta + \delta - 3 \delta \beta) |\bF-\MI|^2\\& + \delta \beta \tr(\bF^3)
 + (\beta + \delta - 3 \delta \beta)_- | \bF - \MI|^2, 
\end{split}
\end{align}
we observe that $\mathcal{F}$ is convex. 
 Indeed, the constant and linear terms are trivially convex. The condition $ \bF\in \R^{d\times d}_{\sym,+}$ assures that the mapping $ \bF \mapsto \tr(\delta \beta \bF^3  +(1-\beta) \bF^{-1})$  is convex. 
For the quadratic term, we observe that  by adding $ (\beta + \delta - 3 \delta \beta)_- \E( \f v , \bF)$, where $( x)_-$ denotes the negative part of a real number, this term become non	negative and thus convex.  
Combining~\eqref{convexA1},~\eqref{convexA2}, and~\eqref{convexA3}, we find that the condition~\eqref{ass:convex} is fulfilled such that Theorem~\ref{thm:envar} can be applied and assures the existence of a solution in the sense of Definition~\ref{def:visco2}.

Now, we want to prove the second part of Theorem~\ref{thm:visco2} concerning the weak-strong uniqueness of solutions. Therefore, we want to apply Proposition~\ref{prop:rel} and we have to assure that Hypothesis~\ref{hypo:2} is fulfilled and that a strong solution enjoying the regularity~\eqref{regStrongM} fulfills the regularity assumptions of~\eqref{abstractreg}. 

For this example we define $\W=\Hsig\times L^3_{\sym}$ and $p=2$ such that
\begin{align*}
\mathbb{Z} (0,T)&:= L^2(\otimeT;\Hsig\times L^3_{\sym})\cap L^\infty(\otimeT;\Ha\times L^2_{\sym}) \\
\intertext{and}
\widetilde{\mathbb Z}(0,T)&:= L^1(\otimeT;L^2(\Omega;\R^d)\times L^2_{\sym}) \cap L^2(\otimeT;(\Hsig)^*\times L^{3/2}_{\sym})\,.
\end{align*}
Firstly, we observe that  $\tbF\in L^\infty(\Omega\times \otimeT )$ and its positive definiteness $ \det \tbF \geq b >0$ a.e.~in $\Omega \times (0,T)$ implies that also $\bF^{-1} \in L^\infty(\Omega\times \otimeT )$. 
This implies, by the calculation~\eqref{subEM}, that for $( \tilde{\f v},\tilde{\bF})\in L^\infty(\otimeT;\Y)$ it holds that $\partial \E(\tilde{\f v},\tilde{\bF})\in L^\infty(\otimeT;\Y)$.

Secondly, we rewrite 
 the operator $A$, via
\begin{align*}
  \int_0^{T}  \langle A(t,\tilde{\f v}, \tilde{\bF}), (\tilde{\f w},\tilde{\mathbb G})\rangle \de t  ={} &
\int_0^T   \mu \langle \nabla \tilde{\f v}, \nabla \tilde{\f w}\rangle - \left( \tilde{\f v} \otimes \tilde{\f v} -\alpha ((1-\beta)(\tilde{\bF} - \MI )+ \beta  (\tilde{\bF}^2-\tilde{\bF})  )  ; \nabla \tilde{\f w}\right) \de t  \\
 & - \int_0^T \langle \f f , \tilde{\f w}\rangle 
  - \left( ( \tilde{\f v}\cdot\nabla) \tilde{\bF}  ; \tilde{\mathbb G}\right) + 2\left( ( \nabla \tilde{\f v} )_{\skw} \tilde{\bF}; \tilde{\mathbb G}\right)\de t \\& - \int_0^T \alpha \left( ( \nabla \tilde{\f v})_{\sym} \tilde{\bF}; \tilde{\mathbb G}\right) + \left( \tilde{\bF} -\MI + \delta  (\tilde{\bF}^2- \tilde{\bF}), \tilde{\mathbb G}\right) \de t \,
  \end{align*}
    such that we may estimate
  \begin{align*}
 \| A(\cdot ,\tilde{\f v}, \tilde{\bF})\|_{\widetilde{\mathbb Z}(0,T)}  \leq{} &   \mu \| \tilde{ \f v}\|_{L^2(\Hsig)}\| %\tilde{ \f w}\|_{L^2(\Hsig)} 
+ \| \tilde{\f v}\|_{L^4(L^4)}^2%\| \tilde{\f w}\|_{L^2(\Hsig)}
% \\&
+ \alpha ((1-\beta) 3+\beta \lVert \tilde{\bF}\rVert_{L^\infty(\Omega\times\otimeT)})  \lVert \tilde{\bF}-\MI\rVert_{L^2(L^2_\sym)} %\| \tilde{\f w}\|_{L^2(\Hsig)}
  \\
  &+ \| \f f \|_{L^2((\Hsig)^*}%\| \tilde{\f w}\|_{L^2(\Hsig)} 
  + \| \tilde{\f v}\|_{L^2(L^6)}\| \tilde{\bF}\|_{L^2(W^{1,3})}% \| \tilde{\mathbb G}\|_{L^\infty(L^2_\sym)} 
  %\\&
  +(2+|\alpha|) \| \tilde{\f v}\|_{L^2(\Hsig)}\| \tilde{\bF}\|_{L^2(L^\infty)}% \| \tilde{\mathbb G}\|_{L^\infty(L^2_{\sym})}
  \\
  &+ ( 3+\delta \lVert \tilde{\bF}\rVert_{L^2(L^\infty)})  \lVert \tilde{\bF}-\MI\rVert_{L^2(L^2_\sym)}%\| \tilde{\mathbb G}\|_{L^\infty(L^2_{\sym})}
   % + \alpha ((1-\beta) \lVert \tilde{\mathbb G} -\MI\rVert _{L^2 (L^2_{
   % \sym   })} + 2 \beta \| \tilde{\mathbb G} \|    
   \,.
\end{align*}
Hence we get that % it is bounded for $(\tilde{\f v},\tilde{\bF})$ fulfilling~\eqref{regStrongM}, $(\tilde{\f w},\tilde{\mathbb G})\in \mathbb Z\otimeT $ such that 
$A(t,\tilde{\f v},\tilde{\bF})\in \widetilde{\mathbb Z}\otimeT $. 
From the regularity assumption~\eqref{regStrongM}, we also infer $\t (\f v ,\bB)^T \in \widetilde{\mathbb Z}\otimeT$.

Thirdly, we observe that $ A $ and $\Psi$ are polynomials in $\nabla \f v$, $\f v$, $\bF$, and $\bF^{-1}$ and as such Gateaux-differentiable (c.f.~\eqref{visco:diss2}). 
All appearing terms are well-defined for the given regularities such that the conditions $D\Psi (\tilde{\f v},\tilde{\bF}) \in \widetilde{\mathbb Z}\otimeT$ and  $DA(\tU)\in  \mathcal L ( \mathbb{Z}(0,T) ; L^1(\otimeT;\Y^*))$ are fulfilled. 
% \Rob{check!}

Finally, we calculate the Hessian of the energy, 
\begin{equation}\label{secondE}
     D ^2 \E(\tilde{\f v},\tilde{\bF}) = \begin{pmatrix}
     \MI & 0\\0&(1-\beta)\bF^{-1}\otimes \bF^{-1}+\beta \MI\otimes \MI  \,
 \end{pmatrix}
\end{equation}
and need to prove that $D^2\E(\tilde{\f v},\tilde{\bF} )\in \mathcal L ( \mathbb Z\otimeT,\mathbb Z \otimeT) $. Since in the first entry $D^2\E(\tilde{\f v},\tilde{\bF} )$ is the identity, the property is obvious. Since $\bF^{-1} \in L^\infty(\Omega \times \otimeT)$ and $\mathbb Z\otimeT $ does not include any spatial derivative for the second component, this property also follows in the second variable of~$D^2\E(\tilde{\f v},\tilde{\bF} ) $. 
Hence, we proved that the conditions~\eqref{abstractreg} are fulfilled such that the weak-strong uniqueness result follows from Proposition~\ref{prop:rel} and Corollary~\ref{cor:weakstrong}. 

% \Rob{Here we have to prove that $A$ and $\diss$ are Gateaux differentiable and that the strong solutions fulfills the regularity assumptions from the abstract result...}

\end{proof}

\renewcommand{\bF}{\mathbb{F}}

\section{Oldroyd-B model for viscoelastic fluid via symmetrized \\Neo--Hook\-ean approach} \label{sec:per}
% 
 % \betti{I think we never used this term with square root before? Where do we introduce this model? }\Rob{Should we rather call this Symmetrized Neo-Hookean model?}

 In this section, we want to apply the abstract result of Theorem~\ref{thm:envar}  to the system~\eqref{fsim2}, which we recall here for the readers' convenience
 \begin{subequations}
\label{sysvisco}     
 \begin{align}
\t \f v + ( \f v \cdot \nabla ) \f v + \nabla p - \mu \Delta \f v  - \alpha  \di \left ( \bF^2- \MI  \right ) ={}& \f f \,, \quad \di \f v = 0 \,,\label{eq1sym}\\
\t \bF +  ( \f v \cdot \nabla ) \bF -2[ (\nabla \f v)_{\skw} \bF]_{\sym} - \alpha [  ( \nabla \f v )_{\sym} \bF]_{\sym}  + \frac{1}{\mu_p}  ( \bF - \bF^{-1}) ={}& 0 \,, \quad  ( \bF)_{\skw} = 0 \, .\label{eq2sym}
\end{align}
% The unknowns are the velocity field $\f v : \ov\Omega \times (0,T) \ra \R^d $, the pressure $p:\ov\Omega \times (0,T) \ra \R$,  as well as the root of the stress tensor $\bF:\ov\Omega \times (0,T) \ra \R^{d\times d}$. 
% The constant $\alpha \in \R$ and $\mu_p\in (0,\infty)$.
% where $\mathcal{F}(\bF) =\frac{1}{2}\left ( | \bF|^2 - |\MI|^2 - \log ( \det ( \bF)^2)\right )$ such that $D \mathcal{F}(\bF) = \bF - \bF^{-1}$.
% \Rob{Probably, we should discuss the exact formulation of the problem. }
We take the parameters $\alpha \in \R\backslash \{0\}, \mu ,\,\mu_p\in[0,\infty)$ and we  equip the system  with the following initial and boundary conditions, 
\begin{align}
    \f v(0) = \f v_0 \,, \quad \bF(0) = \bF_0 \quad \text{in } \Omega\\
    \f v = 0 \quad \text{on } \partial\Omega  \times \cltime\,.
\end{align}
 \end{subequations}
 % \Rob{Definition of $\f f \in L^2(\otime ; \Hsig^*)$? }
% \Rob{\textbf{Introduction and Discussion of the model:}
% \begin{itemize}
%     \item start from the standard Oldroyd-B model
%      \item Argue that this model is not feasable
%      \item introduce the model~\eqref{eq1}--\eqref{eq2}
%      \item discuss choice of $W$
%     \item  this model is also not feasable
%     \item instead consider the current model this gives rise to a different Oldroy-B model in Section 6.1
% \end{itemize}
% }

We define the space $ \V $ to be given  by $\V := \Ha(\Omega)\times L^2_{\sym}(\Omega)$ and the energy $\E : \V \ra [0,\infty)$ via
\begin{align}\label{visco:energy}
    \E(\f v , \bF) : = 
    \begin{cases}
        \frac{1}{2} \left[ \lVert \f v\rVert^ 2_{L^2(\Omega)} +\int_\Omega \lvert \bF\rvert^2 - | \MI|^2 - \log (\det (\bF^2))     \de\f x\right] & \text{if } \bF \in L^2_{\sym,+} (\Omega)
        \\
        \infty & \text{else}
    \end{cases}
    \,.
\end{align}
The space $\Y$ can be defined as in~\eqref{spaceY}.

\begin{definition}\label{def:visco}
We call the triple  $$(\f v, \bF, E)\in L^\infty(\otime;\Ha(\Omega))\cap L^2(\otime;\Hsig(\Omega)) \times L^\infty(\otime;L^2_{\sym,+}(\Omega)) \times \BVl$$ an \textit{energy-variational solution} to \eqref{eq1sym}--\eqref{eq2sym} with the initial values $\f v_0\in \Ha(\Omega)$ and $\bF_0\in L^2_{\sym,+}(\Omega)$, if $ \mathcal{E}(\f v, \bF) \leq E $ and
\begin{align*}
&\left[E - ( \f v , \vv) - ( \bF , \bft )  \right]\Big|_s^t &\\&+ \int_s^t ( \f v , \t \vv ) + ( \bF , \t \bft) + \mu ( \nabla \f v , \nabla \f v - \nabla \vv) + ( \f v\otimes \f v -\alpha  \bF^2 - \MI  : \nabla \vv)  - \langle \f f , \f v - \vv \rangle \de \tau &\\
&+ \int_s^t   (( \f v \cdot\nabla  )\bft , \bF) +2 (  (\nabla \f v)_{\skw} \bF ,  \bft) +\alpha ( (\nabla \f v)_{\sym} \bF , \bft) + \frac  {1 }{\mu_p }\left( \bF - \bF^{-1} , \bF - \bF^{-1} - \bft \right) \de \tau &
\\
&+ \int_s^t \mathcal{K}( \vv , \bft )  \left( \mathcal{E}(\f v , \bF) - E \right) \de \tau &\leq 0 \,
\end{align*}
holds for all $(\vv,\bft) \in \C^1(\cltime ;\Y)$ and for a.e.~$\infty>t>s\geq 0 $ including $s=0$ with $ \f v(0) = \f v_0 $ and $ \bF (0) = \bF_0$,
where the regularity weight $\mathcal{K}$ is given by
\begin{align*}
\mathcal{K}(\vv , \bft) ={}& 2\max\{1,\alpha\} \| ( \nabla \vv)_{\sym}\|_{L^\infty(\Omega)} +   \frac{(2+\alpha)^2}{4\mu} \| \bft\|_{L^\infty(\Omega)} ^2\\
&+ \frac{1}{\mu_p}\| (\bft)_{-}\|_{L^\infty(\Omega)}^2+ \frac{C^2}{\mu} \| \nabla \bft\|_{L^3(\Omega)}^2 \,.
\end{align*}
\end{definition}

Then, we get the following. 

% \betti{Should we give the definition of strong solutions before?}

\begin{theorem}\label{thm:visco}
For every $ \f v _0 \in \Ha(\Omega)$, $ \bF_0 \in L^2_{\sym,+}(\Omega)$ with $-\ln \det \bF_0  \in L^1(\Omega)$ and right-hand side $ \f f \in L^2(\otime ; (\Hsig(\Omega))^*)$ there exists an \textit{energy-variational solution} in the sense of Definition~\ref{def:visco} with $ \E(\f v_0, \bF_0) = E(0)$.

Let $ (\tv , \tbF)$ be a weak solution to~\eqref{sysvisco} with 
\begin{align}\label{regStrongF}
\begin{split}
        \tv & \in L^\infty(\otimeT;W^{1,\infty}(\Omega)) 
        % \cap L^2(\otimeT; \Hsig)
        \cap C^1(\cltimeT; \Ha(\Omega))\\
    \tbF &\in L^\infty (\otimeT;(W^{1,3}\cap L^\infty)( \Omega; \R^{d\times d}_{\sym,+})) \cap C^1(\cltimeT; L^2_{\sym}(\Omega))
\end{split}
\end{align}
such that there exists a $b\in(0,1)$ with $ \det \tbF \geq b$ for a.e.~$(x,t)\in\Omega \times (0,T)$
% $ (\tv , \tbF)\in  L^1(0,T;W^{1,\infty}(\Omega)) \cap L^2(0,T;H^2(\Omega)\cap \Hsig) \times  $  with $\di \tv =0 $ in $\Omega\times[0,T]$ and $ \tv\cdot \f n  = 0 $ on $\partial\Omega\times[0,T]$, 
as well as $ (\tv(0),\tbF(0))=(\f v_0,\bF_0) $. Then it holds that 
$$
\f v = \tv \quad\text{and}\quad \bF = \tbF \quad \text{ for all }(x,t)\in\Omega \times \otimeT\,.
$$

\end{theorem}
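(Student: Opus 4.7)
The plan is to mirror the strategy of Theorem~\ref{thm:visco2}: apply the abstract existence result Theorem~\ref{thm:envar} to obtain an energy-variational solution, then deduce weak-strong uniqueness from Proposition~\ref{prop:rel} together with Corollary~\ref{cor:weakstrong}. I would take $\V:=\Ha(\Omega)\times L^2_{\sym}(\Omega)$, $\Y$ as in~\eqref{spaceY}, $\E$ as in~\eqref{visco:energy}, and set
\[
\Psi(t,\f v,\bF):=\int_\Omega\left(\mu|\nabla\f v|^2+\tfrac{1}{\mu_p}|\bF-\bF^{-1}|^2\right)\de x-\langle \f f(t),\f v\rangle
\]
on its natural domain (and $+\infty$ otherwise). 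Strict convexity, lower semi-continuity, and superlinearity of $\E$ follow from the corresponding properties of the scalar function $\lambda\mapsto\tfrac{1}{2}\lambda^2-\log\lambda$ on $(0,\infty)$. A direct calculation gives $D\E(\f v,\bF)=(\f v,\bF-\bF^{-1})$; since $\bF\mapsto\bF-\bF^{-1}$ is a diffeomorphism from $\R^{d\times d}_{\sym,+}$ onto $\R^{d\times d}_{\sym}$, inversion yields $D\E^*(\vv,\bft)=(\vv,\f B(\bft))$ with the explicit formula
\[
\f B(\bft):=\frac{\bft+\sqrt{\bft^2+4\MI}}{2},
\]
and in particular the key commutation identity $\f B(\bft)\bft=\f B(\bft)^2-\MI$ holds.

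The first step is to verify the energy-dissipation compatibility~\eqref{Adiss}, namely $\langle A(\vv,\f B(\bft)),(\vv,\bft)\rangle=\Psi(\vv,\f B(\bft))$. The cancellations parallel the Malek case: $(\vv\otimes\vv,\nabla\vv)=0$ by $\di\vv=0$; the commutation identity $\f B\bft=\f B^2-\MI$ makes the symmetric elastic contribution $\alpha((\nabla\vv)_\sym\f B,\bft)=\alpha(\nabla\vv,\f B^2-\MI)$ exactly cancel the $\f v$-equation stress $\alpha(\f B^2-\MI,\nabla\vv)$, and renders the skew coupling $2((\nabla\vv)_\skw\f B,\bft)=2((\nabla\vv)_\skw,\f B^2-\MI)$ zero as the Frobenius product of a skew and a symmetric matrix; the transport term $((\vv\cdot\nabla)\bft,\f B)$ equals $\int_\Omega(\vv\cdot\nabla)[\tfrac{1}{2}|\f B|^2+\log\det\f B]\de x$ after using the identity $\partial_k\bft:\f B=\partial_k[\tfrac{1}{2}|\f B|^2+\log\det\f B]$ obtained from $\bft=\f B-\f B^{-1}$, and hence integrates to zero. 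Finally, the relaxation contribution $\tfrac{1}{\mu_p}(\f B-\f B^{-1},\bft)=\tfrac{1}{\mu_p}|\bft|^2$ matches the $\bF$-part of $\Psi\circ D\E^*$.

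The second step is the convexity condition~\eqref{ass:convex}. Convexity of $\Psi$ in $\bF$ rests on $|\bF-\bF^{-1}|^2=|\bF|^2-2d+|\bF^{-1}|^2$ together with Klein's inequality giving convexity of $\bF\mapsto\tr(\bF^{-2})$ on $\R^{d\times d}_{\sym,+}$. The non-convex contributions from $-\langle A,(\vv,\bft)\rangle$ must then be absorbed by $\tfrac{\mu}{2}\|\nabla\f v\|_{L^2}^2$ (from $\Psi$) and by $\mathcal{K}(\vv,\bft)\E$: a Hessian analysis in the $\f v$ direction of $-(\f v\otimes\f v,\nabla\vv)$ and in the $\bF$ direction of $-\alpha(\bF^2,\nabla\vv)$ yields the first summand of $\mathcal{K}$; the elastic couplings with $\bft$ give the $\|\bft\|_{L^\infty}^2$ summand; the transport mixing $((\f v\cdot\nabla)\bft,\bF)$ is handled by the $\|\nabla\bft\|_{L^3}^2$ summand via the Sobolev embedding $\Hsig\hookrightarrow L^6$; and the concave contribution $\tfrac{1}{\mu_p}(\bF^{-1},\bft_-)$ from the relaxation term is dominated by the strict convexity of $-\log\det\bF$ hidden in $\mathcal{K}\E$ provided the summand $\tfrac{1}{\mu_p}\|(\bft)_-\|_{L^\infty}^2$ is included. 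Theorem~\ref{thm:envar} then supplies an energy-variational solution with $E(0)=\E(\f v_0,\bF_0)$.

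I expect the main obstacle to be the estimate on $\alpha(\bF^2,\nabla\vv)$: unlike Theorem~\ref{thm:visco2}, the energy contains no quadratic regularization of $\bF-\MI$, so an $L^2$ control of $\bF$ must be extracted from the combination $|\bF|^2-\log\det\bF^2$ alone. The decisive ingredient is the scalar inequality $2\log\lambda\leq\lambda^2/e$ on $(0,\infty)$, which applied spectrally to $\bF\in\R^{d\times d}_{\sym,+}$ and integrated yields
\[
\|\bF\|_{L^2}^2\leq\frac{e}{e-1}\int_\Omega\bigl(|\bF|^2-\log\det\bF^2\bigr)\de x\leq C\bigl(\E(\f v,\bF)+1\bigr),
\]
turning $\alpha(\bF^2,\nabla\vv)$ into a bound of the required form. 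For the weak-strong uniqueness I apply Proposition~\ref{prop:rel} with $\W:=\Hsig\times L^3_{\sym}$ and $p=2$: Hypothesis~\ref{hypo:2} follows from the polynomial/rational dependence of $A$ and $\Psi$ on $\f v,\nabla\f v,\bF,\bF^{-1}$, and the assumption $\det\tbF\geq b>0$ together with $\tbF\in L^\infty$ forces $\tbF^{-1}\in L^\infty$, so that $D\E(\tU)\in L^\infty(\otimeT;\Y)$ and $D^2\E(\tU)\colon (w,H)\mapsto(w,H+\tbF^{-1}H\tbF^{-1})$ defines a bounded operator on $\mathbb{Z}\otimeT$. The remaining integrability statements in~\eqref{abstractreg} are verified as in the proof of Theorem~\ref{thm:visco2}, and Corollary~\ref{cor:weakstrong} then concludes the argument.
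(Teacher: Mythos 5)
Your proposal follows the paper's strategy step for step: same energy, dissipation, and subdifferential computations, the same cancellations establishing~\eqref{Adiss}, the same convexity mechanism for~\eqref{ass:convex}, and the same invocation of Theorem~\ref{thm:envar} for existence followed by Proposition~\ref{prop:rel} and Corollary~\ref{cor:weakstrong} for weak-strong uniqueness.

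One concrete slip: in the weak-strong part you choose $\W := \Hsig \times L^3_{\sym}$, apparently carried over from Theorem~\ref{thm:visco2}. But Hypothesis~\ref{hypo:2} then requires $\int_0^T \| \bF \|_{L^3}^2 \de t \leq C \int_0^T (\Psi + C_{\Psi} + \E^2 + 1)\de t$, and for the present model neither $\Psi$ nor $\E$ supplies an $L^3$ bound on $\bF$: here the relaxation dissipation is purely quadratic, $|\bF-\bF^{-1}|^2 = |\bF|^2 + |\bF^{-1}|^2 - 2d$, so it controls $\|\bF\|_{L^2}$ but nothing stronger, unlike the Giesekus cubic term $\delta\beta\,\bF(\bF-\MI)^2$ in Theorem~\ref{thm:visco2} which does deliver $L^3$ control and justifies $L^3_{\sym}$ there. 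The paper accordingly takes $\W := \Hsig\times L^2_{\sym}$ for this model, and with that change your argument goes through.

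Your handling of the bound $\|\bF\|_{L^2}^2 \leq \frac{e}{e-1}\bigl(2\E(\f v,\bF)+d|\Omega|\bigr)$ via the spectral inequality $2\log\lambda \leq \lambda^2/e$ is a legitimate refinement. The paper's inline estimate writes $\|\f v\|_{L^2}^2 + \alpha\|\bF\|_{L^2}^2 \leq 2\max\{1,\alpha\}\E(\f v,\bF)$ as if the energy dominated $\|\bF\|_{L^2}^2$ exactly, which fails for small eigenvalues of $\bF$ where $-\log\det\bF^2$ is large but $|\bF|^2$ is not; your additive constant restores the correct inequality, and it is harmless for convexity of the map in~\eqref{ass:convex}, since what is needed is positive semi-definiteness of the quadratic part rather than pointwise nonnegativity of the whole expression.
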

%\begin{remark}
%    \Rob{Remark on negative definite matrices. }\abramo{Maybe we can avoid this remark...}
%
%    
%\end{remark}
\begin{proof}
In order to prove Theorem~\ref{thm:visco}, we want to apply Theorem~\ref{thm:envar}. Therefore, we have to show that Hypothesis~\ref{hypo:1} is fulfilled. 
%We define the space $\Y$ to be 
%\begin{align*}
%\Y := \{ (\vv , \bft) \in \C^1 (\ov\Omega; \R^d \times \R^{d\times d}_{\sym,+}) \text{  with }\di \vv =0
%\text{ in }\Omega\text{ and } \quad \vv = 0 \text{ on }\partial\Omega\} \,.
%\end{align*}
As above, letting $\V := \Ha(\Omega)\times L^2_{\sym}(\Omega)$ and the energy be given by~\eqref{visco:energy}.
The dissipation potential $\Psi : \V \ra [0,\infty]$ is given by
\begin{align}
    \label{visco:diss}
    \diss({t},\f v , \bF) = \begin{cases}
        \int_\Omega \mu\lvert \nabla \f v \rvert^2 - \langle \f f {(t)}, \f v \rangle + \frac{1}{\mu_p} \lvert \bF-\bF^{-1} \rvert^2 \de x  & \text{if }\f v \in \Hsig, \, \bF \in L^2_{\sym,+}(\Omega) \\
        \infty & \text{else}
    \end{cases}\,
\end{align}
on its domain.
In a standard way, we may choose $ C_{\Psi}(t) := \frac{1}{2\mu}  \| \f f \|_{(\Hsig)^*}^2$ and estimate via Young's inequality that $\Psi (t,v,\bF) \geq -C_{\Psi}(t) $ as in the previous example.
In order to check the next assumptions, we need to calculate the convex conjugate of the energy $\E$. 
The subdifferential is single-valued on its domain and is given by  \begin{align}\label{subEF}
\partial \E : \V \ra 2^{\V^*} \qquad
    \partial \E( \f v , \bF) =\begin{pmatrix}
        \f v \\ \bF - \bF^{-1} 
    \end{pmatrix}\,.
\end{align}
Its inverse gives the subdifferential of the convex conjugate according to Fenchels equivalences, which is  single-valued on $\V^*= \V $ and given by 
\begin{align}\label{subEstar}
    \partial \E^* : \V^* \ra 2^{\V} \qquad
    \partial \E^*(\vv,\bft) = 
        \begin{pmatrix}
        \vv\\
        \f{F}(\bft)
    \end{pmatrix} \, \hbox{when $ \bft \in L^2_{\sym,+}$ and $(\vv , \f 0)^T$ else.}
\end{align}
Here,  we used the definition $\f{F}(\bft) = 
 \frac{\bft}{2}+\sqrt{\frac{\bft^2}{4}+\MI}$,
  which comes from the fact that  $\bft$
solves the matrix equation $\bft = \bF- \bF^{-1} $ and as a consequence of the positive definitesness of $\bF$, it holds  $ 0=\bF^2 - \bft \bF - \MI $.
  The plus sign in front of the square root in the definition of $\f{F}$ reflects the fact that $\partial \E^*$ should map into the domain of $\E$. 
We note that the derivative $D\E^*(\vv,\bft) \in \dom\diss$ for all $ (\vv,\bft)\in\Y$, which follows from the fact that $\vv \in \Hsig$ and $ \bft \in L^2_{\sym,+} $ implying that the second component of $ \partial \E^*$ belongs to $ L^2_{\sym,+}(\Omega)$.   

The operator $A: \dom\diss \ra \Y^*$ is given by all terms in the equations~\eqref{sysvisco} despite the time derivative, such that
\begin{align}\label{defA}
\begin{split}
    \langle A(\f v, \bF), (\vv,\bft)^T\rangle = &
  \mu \langle \nabla \f v, \nabla \vv\rangle - \left( \f v \otimes \f v - \alpha\bF^2 ; \nabla \vv\right)  - \langle \f f , \vv\rangle 
  \\
 & - \left( \bF \otimes \f v \dreidotkom \nabla \bft\right) - 2\left( ( \nabla \f v )_{\skw} \bF; \bft\right) - \alpha \left( ( \nabla \f v)_{\sym} \bF; \bft\right) + \frac{1}{\mu_p} \left( \bF - \bF^{-1} , \bft\right) \,.
 \end{split}
\end{align}
We need to verify that~\eqref{Adiss} is fulfilled for $A$ and $\diss$ as given above, \textit{i.e.,}
$$
\left\langle A( {t,}D \E^*(\vv , \bft)) ; \begin{pmatrix}
\vv\\\bft
\end{pmatrix}\right \rangle = \Psi(t,D \E^*(\vv,\bft))
$$
In order to verify this assumption, we insert $\f v= \vv$ and $\bF = \f{F}(\bft)$ in~\eqref{defA}. 
%\betti{This is not the right reference?}
This calculation resembles the usual energy estimate. 
 Since, $\vv$ is a solenoidal vector field, the convection term vanishes, $ ( \vv \otimes \vv ;\nabla \vv)=0$. From the identity $ \f{F}^2(\bft) - \bft \f{F}(\bft) =I$, we find that 
 $$
 \alpha ( \f{F}^2(\bft) ;\nabla \vv) - \alpha (( \nabla \vv )_{\sym} \f{F}(\bft); \bft) = \alpha ( \f{F}^2(\bft) - \bft \f{F}(\bft) ; (\nabla \vv)_{\sym}) = \alpha (\MI ; \nabla \vv) = 0 \,.
 $$
 Furthermore, we observe by an integration by parts and expressing $\bft$ in terms of $\f{F}(\bft)$ that
 \begin{align*}
 -& ( \f{F}(\bft)\otimes \vv \dreidotkom \nabla \bft) - ( ( \nabla \vv)_{\skw}\f{F}(\bft);\bft) 
 \\&= ( ( \vv \cdot \nabla ) \f{F}(\bft) ;\mathbb{F}(\bft) - (\f{F}(\bft) )^{-1}) - ( ( \nabla \vv)_{\skw}\f{F}(\bft) ; \mathbb{F}(\bft) - (\f{F}(\bft) )^{-1}) 
 \\&= \int_\Omega ( \vv \cdot \nabla ) \tr\left ( \frac{1}{2}\f{F}^2(\bft) - \ln (\f{F}  ( \bft))\right ) - ( \nabla \vv)_{\skw} : (\f{F}^2(\bft) - \MI ) \de x = 0\,.
 \end{align*}
 The first term vanishes since $\vv$ is a solenoidal vector field and the second one since $\f{F}^2(\bft) - \MI$ is symmetric. 
Using all these cancellations, we find  for the remaing dissipative terms the following equality
\begin{align*}
%  \left\langle A( D \E^*(\vv , \bft)) ; \begin{pmatrix}
% \vv\\\bft
% \end{pmatrix}\right \rangle 
% = {}&
  \langle A({t,}\vv, \f{F}(\bft)), (\vv,\bft)^T\rangle  ={} & \mu \| \nabla \vv\|_{L^2(\Omega)}^2 - \langle \f f{(t)} , \vv\rangle + \frac{1}{\mu_p} \| \bft \|_{L^2(\Omega)}^2 = \diss ( \vv , \f{F}(\bft))\,
\end{align*} 
 holding true  for all $ (\vv,\bft)\in \Y$. 
 Due to~\eqref{subEstar}, this calculation implies that assumption~\eqref{Adiss} is fulfilled and 
also that the mapping $ \diss \circ D\E^*(\vv, \bft) = \diss( \vv, \mathbb{F}(\bft)) $ given above is continuous on $\Y$. 
% 
% 
%From calculating $ \diss \circ \abramo{D\E^*(\vv, \bft)?} = \mu \| \nabla \vv\|_{L^2(\Omega)}^2 - \langle \f f , \vv \rangle + \frac{1}{\mu_p}\| \bft\|_{L^2(\Omega)}^2  $, we immediately find the continuity condition fulfilled for this mapping. 
{We also observe from \eqref{visco:energy} and \eqref{visco:diss} that at their global minimum $\E(\f v_{\min} , \bF_{\min})=\diss(\f v_{\min} , \bF_{\min})=0$ with $ \f v_{\min}= \f 0$ and $\bF_{\min} = \MI$.} 

The final condition, we need to verify is~\eqref{ass:convex}. 
Therefore, we observe the estimates
\begin{align*}
    \left( \f v \otimes \f v - \alpha\bF^2 ; \nabla \vv\right)={}&
    \left( \f v \otimes \f v - \alpha\bF^2 ; (\nabla \vv)_{\sym}\right)
    % \\
    %    ={}& \left( \f v \otimes \f v - \alpha\bF^2 ; (\nabla \vv)_{\sym,+}\right)+    \left( \f v \otimes \f v - \alpha\bF^2 ; (\nabla \vv)_{\sym,-}\right)
\\
    \leq{}& \left(\| \f v \|_{L^2(\Omega)}^2 + \alpha \| \bF\|_{L^2(\Omega)}^2 \right) \| (\nabla \vv)_{\sym} \|_{L^\infty(\Omega ; \R^{d\times d})} 
 \\   \leq{}&  \| (\nabla \vv)_{\sym} \|_{L^\infty(\Omega ; \R^{d\times d})} 2\max\{1,\alpha\}\E(\f v ,\bF)
 \intertext{and}
  \left( \bF \otimes \f v \dreidotkom \nabla \bft\right) &\leq{} \lVert \f v\rVert _{L^6(\Omega)} \| \bF\|_{L^2(\Omega)} \| \nabla \bft\|_{L^3(\Omega)}
 \\& \leq C \| \nabla \f v\|_{L^2(\Omega)} \| \bF\|_{L^2(\Omega)} \| \nabla \bft\|_{L^3(\Omega)}
 \\ & \leq  \frac{\mu}{2} \| \nabla \f v\|_{L^2(\Omega)}^2 + \frac{C^2}{\mu} \| \nabla \bft\|_{L^3(\Omega)}^2 \E(\f v ,\bF) \,.
\end{align*}
Furthermore, we obtain,  from the fact that $2 \| ( \nabla \f v )_{\skw}\|_{L^2(\Omega)}= \| \nabla \f v \|_{L^2(\Omega)}^2 = 2 \| ( \nabla \f v )_{\sym}\|_{L^2(\Omega)} $
for all $\f v \in \Hsig$, that
\begin{align*}
-2\left( ( \nabla \f v )_{\skw} \bF; \bft\right)& - \alpha \left( ( \nabla \f v)_{\sym} \bF; \bft\right) \\&\leq \left(2 \| ( \nabla \f v)_{\skw}\|_{L^2(\Omega)}+ \alpha \| ( \nabla \f v )_{\sym} \|_{L^2(\Omega)} \right)\| \bF\| _{L^2(\Omega)} \| \bft\|_{L^\infty(\Omega)} \\&\leq \frac{1}{2}( 2 +  \alpha ) \| \nabla \f v \|_{L^2(\Omega)} \| \bF\| _{L^2(\Omega)} \| \bft\|_{L^\infty(\Omega)} 
\\&\leq \frac{\mu}{2} \|\nabla \f v \|_{L^2(\Omega)} ^2 + \frac{(2+\alpha)^2}{
4\mu}\| \bft\|_{L^\infty(\Omega)}^2 \E(\f v ,\bF) \,.
\end{align*}
These three inequalities imply that the mapping 
\begin{align*}
    ( \f v ,\bF) \mapsto & \mu \| \nabla \f v \|_{L^2(\Omega)}^2 + \left( \f v \otimes \f v - \alpha\bF^2 ; \nabla \vv\right) + \left( \bF \otimes \f v \dreidotkom \nabla \bft\right)+\left( ( \nabla \f v )_{\skw} \bF; \bft\right) + \alpha \left( ( \nabla \f v)_{\sym} \bF; \bft\right)
    \\ &+\left( 2\max\{1,\alpha\}\| (\nabla \vv)_{\sym} \|_{L^\infty(\Omega ; \R^{d\times d})} +\frac{C^2}{\mu} \| \nabla \bft\|_{L^3(\Omega)}^2 +  \frac{(2+\alpha)^2}{4\mu}\| \bft\|_{L^\infty(\Omega)} ^2\right)  \E(\f v ,\bF)
\end{align*}
is nonegative. Since it is quadratic, it is also a convex mapping. The linearity of the mapping, $$  ( \f v, \bF)\mapsto - \mu \langle \nabla \f v , \nabla \vv \rangle - \langle \f f , \f v - \vv \rangle - \frac{1}{\mu_p}( \bF, \bft) $$ assures its convexity and therewith, weakly-lower semi-continuity~\cite{ioffe}.  
% \Rob{We need to prove that the whole mapping is bounded  from below...}

For the dissipative term depending on the stress tensor $\bF$, we find
\begin{align}
    \lVert \bF-\bF^{-1}\rVert_{L^2(\Omega)}^2 =  \lVert \bF\rVert_{L^2(\Omega)}^2+ \lVert \bF^{-1}\rVert_{L^2(\Omega)}^2 - 2 d .
\end{align}
The first term is obviously quadratic and convex and the constant term, does not change this. 
Finally, we consider the mapping $\mathcal{S}:\R^{d\times d} _{\sym,+} \ra \R$ given via
$$
\mathcal S : \bF \mapsto \tr( (\bF^{-1}) ^2) +  \tr(\bF^{-1} \Phi) - \tr((\Phi)_-)^2 \ln \det(\bF) \,.
$$
The first derivative in direction $\mathbb{G}\in \R^{d\times d}_{\sym,+}$ is given by
$$\langle D\mathcal{S}(\bF) , \mathbb G\rangle = - \tr( \bF^{-1} \bF^{-1} \mathbb G \bF^{-1} ) - \tr( \bF^{-1} \mathbb G \bF^{-1} \bF^{-1} ) - \tr(\bF^{-1} \mathbb G \bF^{-1}  \Phi) - \tr((\Phi)_-)^2 \tr(\bF^{-1}\mathbb G) \,.
$$
Calculating the second derivative, in the directions $\mathbb{G}$ and $\mathbb{H}\in \R^{d\times d}_{\sym,+}$, we find
\begin{align*}
    \langle D^2 \mathcal S ( \bF)) ,( \mathbb{G},\mathbb{H})\rangle& = 
\tr( \bF^{-1}\mathbb{H} \bF^{-1} \bF^{-1} \mathbb{G}\bF^{-1} + \bF^{-1} \bF^{-1} \mathbb{H}\bF^{-1} \mathbb{G}\bF^{-1} +\bF^{-1} \bF^{-1} \mathbb{G}\bF^{-1} \mathbb{H}\bF^{-1})\\&\quad  +\tr( \bF^{-1} \mathbb{H}\bF^{-1} \mathbb{G}\bF^{-1} \bF^{-1} +\bF^{-1} \mathbb{G}\bF^{-1} \mathbb{H}\bF^{-1} \bF^{-1} +\bF^{-1} \mathbb{G}\bF^{-1} \bF^{-1} \mathbb{H}\bF^{-1} )
\\&\quad +
\tr ( \Phi (\bF^{-1} \mathbb{G} \bF^{-1} \mathbb{H} \bF^{-1} + \bF^{-1} \mathbb{H} \bF^{-1} \mathbb{G} \bF^{-1} )) \\&\quad+ \tr((\Phi)_-)^2 \tr( \mathbb{G} \bF^{-1} \mathbb{H} \bF^{-1} +  \mathbb{H} \bF^{-1} \mathbb{G} \bF^{-1})
\intertext{such that}
 \langle D^2 \mathcal S ( \bF)) ,( \mathbb{G},\mathbb{G})\rangle
 &= 
2\tr( (\bF^{-1}\mathbb{G} \bF^{-1})^2 
+ \bF^{-1} \bF^{-1} (\mathbb{G}\bF^{-1} )^2 + \bF^{-1} (\mathbb{G}\bF^{-1} )^2\bF^{-1} 
)
\\&\quad +
2\tr ( \Phi (\bF^{-1} (\mathbb{G} \bF^{-1})^2
))
+2 \tr((\Phi)_-)^2 \tr( (\mathbb{G} \bF^{-1})^2)
\\
 &\geq 
2\tr( (\bF^{-1}\mathbb{G} \bF^{-1})^2 
+ \bF^{-1} \bF^{-1} (\mathbb{G}\bF^{-1} )^2 + \bF^{-1} (\mathbb{G}\bF^{-1} )^2\bF^{-1} 
)
\\&\quad - 2
\tr ( (\Phi)_- ) \tr(  \bF^{-1}( \mathbb{G} \bF^{-1})^2 
% \mathbb{G} \bF^{-1} +  \mathbb{G} \bF^{-1} \mathbb{G} \bF^{-1} )
)+2 \tr((\Phi)_-)^2 \tr( (\mathbb{G} \bF^{-1})^2 )
% \mathbb{G} \bF^{-1} +  \mathbb{G} \bF^{-1} \mathbb{G} \bF^{-1})  
\\&\geq 2\tr( (\bF^{-1}\mathbb{G} \bF^{-1})^2 
+ \bF^{-1} \bF^{-1} (\mathbb{G}\bF^{-1} )^2 + \bF^{-1} (\mathbb{G}\bF^{-1} )^2\bF^{-1} 
)>
0\,
\end{align*}
as long as $ \mathbb G \neq \MO$. 
The first inequality follows from
\begin{align}
    \tr (   (\Phi)_+ \bF^{-1} \mathbb{G} \bF^{-1}\mathbb{G} \bF^{-1} ) =  \tr (  \bF^{-1/2} (\Phi)_+\bF^{-1/2}( \bF^{-1/2} \mathbb{G} \bF^{-1/2})^2)
    % \mathbb{H} \bF^{-1} + \bF^{-1} \mathbb{H} \bF^{-1} \mathbb{G} \bF^{-1} ))
    \geq 0\,.
\end{align}
since the positive semi definite matrix $\bF^{-1/2} (\Phi)_+\bF^{-1/2}$ induces a positive quadratic form. 
Note that $\bF$ is a positive definite matrix such that also their inverse and products remain positive definite.
In order to deduce the second inequality, we used the trace property
$ \tr(\mathbb A \mathbb B) \leq \tr( \mathbb A^2 )\tr(\mathbb B^2)$, which implies 
$$ \tr(\mathbb A \lambda \mathbb B^2) = \tr (\mathbb  A {\mathbb B}\lambda {\mathbb B} ) \leq \tr((\mathbb A{\mathbb B})^2)+ \tr((\lambda{\mathbb B} )^2)\leq \tr (\mathbb A^2 \mathbb B^2) + \lambda^2\tr(\mathbb B^2)$$ for positive definite symmetric matrices $\mathbb A$, and $\mathbb B$, as well as $\lambda >0$. 
The inequality 
\begin{align*}
\tr ( (\Phi)_- ) \tr(  \bF^{-1}( \mathbb{G} \bF^{-1})^2 )
% \mathbb{H} \bF^{-1} +  \mathbb{H} \bF^{-1} \mathbb{G} \bF^{-1} ))\\
\leq \tr(\bF^{-1} \bF^{-1}( \mathbb{G}\bF^{-1})^2  ) + \tr((\Phi)_-)^2 \tr(( \mathbb{G} \bF^{-1})^2  )    
\end{align*}
follows from by choosing $ \mathbb A=  \bF^{-1}$, $\mathbb B =  \mathbb{G}\bF^{-1} $ and $\lambda = \tr ( (\Phi)_- )$.

Thus, we have shown that the mapping $\mathcal{S}$ is convex, since its second derivative is positive definite. 
Hence~\eqref{ass:convex} is fulfilled and thus, Hypothesis~\ref{hypo:1} is fulfilled, which implies the first  assertion concerning the existence.

Now we want to argue that Hypothesis~\ref{hypo:2} is fulfilled as well as the regularity assumption~\eqref{abstractreg} such that we may apply Proposition~\ref{prop:rel} and Corollary~\ref{cor:weakstrong}.
In this regard, we choose $\W :=\Hsig(\Omega) \times L^2_{\sym}(\Omega)  $, $p=2$  such that 
\begin{align*}
\mathbb{Z} (0,T)&:= L^2(\otimeT;(\Hsig\times L^2_{\sym})(\Omega) )\cap L^\infty(\otimeT;(\Ha \times L^2_{\sym})(\Omega))
\intertext{and}
\widetilde{\mathbb Z}(0,T)&:= L^1(\otimeT;(L^2\times L^2_{\sym})(\Omega)) \cap L^2(\otimeT;((\Hsig)^*\times L^{2}_{\sym})(\Omega))\,.
\end{align*}
{Similarly} to the previous example, we observe that $\tilde{\bF}^{-1} \in L^\infty(\Omega\times \otime )$ because  $ \det\tilde{ \bF} \geq b >0$ a.e.~in $\Omega \times (0,T)$. 
This implies, by the calculation~\eqref{subEF}, that, for $( \tilde{\f v},\tilde{\bF})\in L^\infty(\cltime;\Y)$, it holds that $D \E(\tilde{\f v},\tilde{\bF})\in L^\infty(\otimeT;\Y)$.
The validation of the regularity assumption~\eqref{abstractreg} for strong solutions fulfilling~\eqref{regStrongF} is very similar to the reasoning at the end of Section~\ref{sec:Mal}, hence we will not report this here in detail. Thus the second assertion of Theorem~\ref{thm:visco} follows from Propositon~\ref{prop:rel} and Corollary~\ref{cor:weakstrong}.

\end{proof}
\section{Conclusion and Outlook}
In the previous sections, we introduced and exemplified a new formulation and solvability concept for nonlinear evolution equations especially suited to treat viscoelastic fluid models. This formulation allows to pass to the limit in possible approximations only be means of compactness in weak topologies and thus, does not require to add  stress diffusion in viscoelastic fluid models. 
We are able to provide existence and weak-strong uniqueness under {proper} assumptions, which we show to be reasonable at the hand of two different examples in the Sections~\ref{sec:Mal} and~\ref{sec:per}. 

In order to further support our claim that this new approach has the potential to provide an  adaptable and transferable concept, we want to comment on the applicability to other viscoelastic fluid models and possible directions of future research in this last section. 

\paragraph{Peterlin model.}
Another viscoelastic fluid model that received a lot of attention in recent years with a quadratic (hence superlinear) elastic free energy density  
\begin{equation}
    \label{model29}
    \tilde{e}(\bB)=\frac{1}{2}\left (\tr (\bB)^2 -\tr(\MI)^2 - 2\tr (\ln (\bB)) \right)
\end{equation}
is the so-called viscoelastic Peterlin model for polymeric fluids introduced in~\cite{RenardyPeterlin} (\textit{cf.}~\cite{brunk}). Collecting \eqref{model2}, \eqref{model21}, \eqref{model23} and \eqref{model29}, and assuming, as in \cite{brunk}, that the parameter $\mu_p^{-1}$ is a linear function of $\tr \bB$ representing a generalized relaxation term as a material viscometric function,  we obtain the following model:
 \begin{subequations}
\label{pet1}     
 \begin{align}
\t \f v + ( \f v \cdot \nabla ) \f v + \nabla p - \mu \Delta \f v  -{\alpha}
% \frac{\alpha}{2} 
\di \left ( (\tr \bB)\bB  \right ) ={}& \f f \,, \quad \di \f v = 0 \,,\label{pet1v}\\
\t \bB +  ( \f v \cdot \nabla ) \bB -2[ (\nabla \f v)_{\skw} \bB]_{\sym} - \alpha   [  ( \nabla \f v )_{\sym} \bB]_{\sym}  + \tr \bB\left((\tr \bB)\bB - \MI\right) ={}& \f 0 \,, \quad  ( \bB)_{\skw} = 0 \, .\label{pet1B}
\end{align}
% \Rob{I adapted the scaling of the stress tensor in the Navier--Stokes equations, is it correct? }\abramob{Maybe we need a factor $\frac{\alpha}{2}$, if we have \eqref{model29}, and also a factor of $\frac{1}{2}$ in the dissipation term in the transport equation.} \Rob{I rather adapted the energy~\eqref{model29} such that is better fits to the other enerrgies...?}
\end{subequations}
The dissipation term \eqref{model23d} coming from the relaxation processes is in this case
\begin{align*}
    \Psi( \f v ,  \bB):= {}&
\int_{\Omega}\mu | \nabla \f v|^2 + (\tr \bB) \bB\left((\tr \bB)\MI-\bB^{-1}\right):\left((\tr \bB)\MI-\bB^{-1}\right)\de x  \\
={}&  \int_{\Omega}\mu | \nabla \f v|^2 + \tr(\bB)^4 - 2 \tr(\bB)^2+  \tr(\bB) \tr ( \bB^{-1})   \de x
\,.
\end{align*}
The last term in this dissipation potential is not convex as a mapping of $\bB$. 
Defining $ \mathcal{S}(\bB) = \tr (\bB)\tr( \bB^{-1}) $, we may calculate the second derivative as
\begin{align*}
    \langle \partial^2_{\bB} \mathcal{S} (\bB) , (\mathbb G,\mathbb H)\rangle &= - \tr(\mathbb G) \tr(\bB^{-1}\mathbb H\bB^{-1}) - \tr(\mathbb H) \tr(\bB^{-1}\mathbb G\bB^{-1}) \\&\quad + 2 \tr(\bB) \tr(\bB^{-1}\mathbb H\bB^{-1}\mathbb G\bB^{-1})\,.
\end{align*}
This form is not positive for $\mathbb G = \mathbb H$ and can not be made positive by adding a multiple of the energy. Indeed, we may estimate the two terms with the negative sign via
\begin{align*}
\langle \partial^2_{\bB} \mathcal{S} (\bB) , (\mathbb G,\mathbb G )\rangle&=
-2\tr(\mathbb G) \tr(\bB^{-1}\mathbb G\bB^{-1})  + 2 \tr(\bB) \tr(\bB^{-1}\mathbb G\bB^{-1}\mathbb G\bB^{-1}) \\&\geq - 2 \tr(\mathbb G)^2\frac{\tr(\bB^{-1})}{\tr(\bB)} \geq -2d \tr(\mathbb G)^2\tr( (\bB^{-1})^2)\,.
\end{align*}
However, from the second derivative of the energy, we get the term $ -\langle \partial^2_{\bB} \ln\det(\bB), (\mathbb G,\mathbb G)\rangle = \tr( (\bB^{-1} \mathbb G)^2)$. 
In order to have any hope to find a multiple of the energy that when added to $\mathcal{S}$ makes the sum convex, we would need to find a $\eta > 0$ $\tr(\mathbb G)^2\tr( \bB^{-1})^2\leq \eta \tr( (\bB^{-1} \mathbb G)^2)$. But this does not seem to be possible. Mainly, the problem lies in the non-convexity of the dissipation potential. 
% We need to control the first term by the latter, but this is not possible since it only holds 
% %
% %only works in the wrong direction 
% $\tr(\mathbb G)^2\tr( \bB^{-1})^2\geq  \tr( (\bB^{-1} \mathbb G)^2)$.
Thus,  the hypothesis of convexity of the mapping \eqref{ass:convex} cannot be shown to be satisfied, and hence we cannot apply Theorem \ref{thm:envar} to infer the existence of \textit{energy-variational solutions} to system \eqref{pet1}. 

Collecting \eqref{model2}, \eqref{model21}, \eqref{model25} and \eqref{model29},  we obtain the following variant of the generalized Peterlin model:
 \begin{subequations}
\label{petg1}     
 \begin{align}
\t \f v + ( \f v \cdot \nabla ) \f v + \nabla p - \mu \Delta \f v  -{\alpha} 
% \frac{\alpha}{2} 
\di \left ( (\tr \bB)\bB  \right ) ={}& \f f \,, \quad \di \f v = 0 \,,\label{petg1v}\\
\t \bB +  ( \f v \cdot \nabla ) \bB -2[ (\nabla \f v)_{\skw} \bB]_{\sym} - \alpha   [  ( \nabla \f v )_{\sym} \bB]_{\sym}  + \frac{1}{\mu_p}\left((\tr \bB)\bB^2 - \bB\right) ={}& \MO \,, \quad  ( \bB)_{\skw} = \MO \, .\label{petg1B}
\end{align}
\end{subequations}
The dissipation term \eqref{model25d} coming from the relaxation processes is in this case
\[
\Psi(\f v, \bB) := \int_{\Omega}\mu | \nabla \f v |^2 + \frac{1}{\mu_p} \left((\tr \bB)\bB-\MI\right):\left((\tr \bB)\bB-\MI\right)\de x.
\]
With the latter term in the system dissipation the hypothesis of convexity of the mapping \eqref{ass:convex} is satisfied. 
We can also allow more general dissipation functionals as long as the convexity assumption~\eqref{ass:convex} for the associated mapping is fulfilled. 
% For instance, if we choose $ 1/\mu_p $ depending on $\tr (\bB)$ for instance $ \mu_p (s) = s^{-p}$, we can still find convexity by choosing the regularity weight accordingly, including a term of the kind $ C \| \bft \| _{L^\infty(\Omega)}^{p+2}$. 

\paragraph{{Lin--Liu--Zhang model.}}
There are also other viscoelastic fluid models that fit into the proposed model framework. A very simple one arises, if we combine~\eqref{model6} with the energy $\tilde e (\bF) = \frac{1}{2}| \bF|^2$, this leads to the system considered in~\cite{Lin05},
\begin{align*}
    \t \f v + ( \f v \cdot \nabla ) \f v + \nabla p - \mu \Delta \f v  - \di \left ( \bF\bF^T  \right ) ={}& \f f \,, \quad \di \f v = 0 \,,\\
\t \bF +  ( \f v \cdot \nabla ) \bF -\nabla \f v\bF ={}& 0\,, \quad \di \bF = 0  \, .
\end{align*}
For this system all assumptions of abstract result in Theorem~\ref{thm:envar}
are fulfilled with regularity weight $\mathcal{K}_1(\vv,\bft)= 2 \| \nabla \vv \|_{L^\infty(\Omega)}+ C (\| \nabla \bft \|_{L^3(\Omega)}^2+ \| \bft\|_{{L^\infty(\Omega)}}^2)$, where the test functions are chosen as in the two examples above. 
In this context, the normalization condition~$\di\bF=\f 0$ could also be disregarded. 
But, by the div-curl lemma it can be observed that the weakly-lower semi-continuity of the associated functional in the assumption~\eqref{ass:convex} can already deduced for 
$\mathcal{K}_2(\vv,\bft)= 2 \| \nabla \vv \|_{L^\infty(\Omega)}$. Thus a future direction to further sharpen our result is to weaken the convexity assumptions of the associated form in~\eqref{ass:convex} in order to also sharpen the associated result for this basic example. 
% Indeed, it can be shown that the associated function is still convex-like (see~\cite[Definition~2.123]{barbu}) and under this assumption the abstract min-max result of~\cite{Fan1953} can still be applied such that our result can also be applied for the changed regularity weight. 
Note that for the improved regularity weight $\mathcal K_2$, it can be shown that the existence of \textit{energy-variational solutions} implies the existence of measure-valued solutions as proposed in~\cite{Lin05} in the same way as it is proven for the Ericksen--Leslie system in~\cite{max}.

Moreover, it would be desirable to also include energies into our framework, that {do} not necessarily {have} superlinear growth, which could allow to further understand at least the Giese\-kus model~\eqref{gk1} or variants of it. We note that the superlinearity of the energy is essential to infer that $ \dom \E^* = \V^*$. In case that $\E$ is only of linear growth, its convex conjugate $\E^*$ may only be finite on a subset of $\V^*$. This especially hinders an argument as in \textit{Step 3.} of the proof of Theorem~\ref{thm:envar}, where the continuity of the function $f(\alpha)=\E(D\E^*(\alpha\Phi))$ is shown for all $\Phi \in \Y\subset \V^*$. 

But still it may be possible to show the continuity of the function on a smaller  (non-linear) set, $\Y \subset \dom \E^*$. This would lead to a set of test functions $\Y$ that is not linear anymore. But this can still be a reasonable class of solutions fulfilling existence and weak-strong uniqueness. Indeed, in the proof of the weak-strong uniqueness result in Proposition~\ref{prop:rel}, we have to chose $ \Phi = D \E(\tU)$ for the strong solution $\tU$. By the Fenchel equivalences~\eqref{eq:fenchel}
it is clear that $ D \E(\f y)\subset \dom D\E^* \subset \dom \E^* $ such that it would make sense to restrict the test space to $\Y\subset \dom \E^*$. We plan to investigate such models in the future.

Thus, we think that this new approach may provides some fruitfull new insight into the analysis of viscoelastic fluid models. 
The proposed formulation includes a large class of models and incorporates desirable properties. Beside the existence and weak-strong uniqueness, we observed that the solutions form a semi flow (see Remark~\ref{rem:semi}). It can also be inferred that the set of solutions is weakly-star closed. Therefore, it could be desirable to select a special solution of the  set of solutions as the physically relevant one as proposed in~\cite{maxdiss} for incompressible fluid dynamics.  We note that this is already the idea of the time-incremental minimization algorithm proposed in the proof of Theorem~\ref{thm:envar}, where the energy is minimized in~\eqref{eq:timedis}. 

\appendix
% \abramo{
\section*{Appendix}

% \betti{We should say here what e are doing in the Appedix?}
This appendix has two parts. Firstly, we give an explicit formula for the matrix $\mathbb W$ in~\eqref{NoeHookOldroyd} and Secondly, we calculate the abstract relative energy-inequality~\eqref{relenin} for the first example~\eqref{sysvisco2} explicitly.
\subsection*{{A: explicit formula for the matrix $\mathbb W$}}
Let us consider the polar decomposition of the deformation gradient, i.e.
\[
\mathbb{F}= \mathbb{S} \mathbb{Q},
\]
valid if $\det \mathbb{F}>0$, where $\mathbb{Q}$ is a rotation tensor and $\mathbb{S}$ is a symmetric and positive definite matrix. We start from the relation
\begin{equation}
\label{eqmodb1}
D_t{\mathbb{F}}=a \nabla \f v \mathbb{F}+b (\nabla \f v)^T \mathbb{F},
\end{equation}
which becomes
\begin{equation}
\label{eqmodb2}
D_t{\mathbb{F}}=D_t({\mathbb{S}})\mathbb{Q} +\mathbb{S}D_t{\mathbb{Q}}=a \nabla \f v \mathbb{S} \mathbb{Q}+b (\nabla \f v)^T \mathbb{S} \mathbb{Q}.
\end{equation}
The latter relation involves $d^2$ equations, the tensor $\mathbb{Q}$ has $\frac{d(d-1)}{2}$ degrees of freedom and the tensor $\mathbb{S}$ has $\frac{d(d+1)}{2}$ degrees of freedom, so both $\mathbb{Q}$ and $\mathbb{S}$ can be determined by the previous relation. Multiplying \eqref{eqmodb2} by $\mathbb{Q}^T$ from the right, and introducing the angular velocity tensor $\mathbb{W}:=D_t({\mathbb{Q}})\mathbb{Q}^T$, we get that
\begin{equation}
\label{eqmodb3}
D_t({\mathbb{F}})\mathbb{Q}^T=D_t{\mathbb{S}}+\mathbb{S}\mathbb{W}.
\end{equation}
Also,
\begin{equation}
\label{eqmodb4}
\mathbb{Q}D_t\left({\mathbb{F}}^T\right)\mathbb{Q}=D_t{\mathbb{S}}-\mathbb{W} \mathbb{S}.
\end{equation}
Subtracting the latter two equations, we obtain that
\[
\mathbb{S}\mathbb{W}+\mathbb{W} \mathbb{S}=\mathbb{H}:=a[(\nabla \f v)\mathbb{S}-\mathbb{S}(\nabla \f v)^T]+b[(\nabla \f v)^T\mathbb{S}-\mathbb{S}(\nabla \f v)].
\]
The latter relation is a well known tensorial equation of the kind
\[
\mathbb{A} \mathbb{S}+ \mathbb{S} \mathbb{A}=\mathbb{H},
\]
with $\mathbb{S}\in \mathbb{R}_{sym}^{3\times 3}$, $\mathbb{A},\mathbb{H}\in 
\mathbb{R}_{skw}^{3\times 3}$ (in three space dimension). A solution of the latter equation is of the form (see e.g. \cite{sidoroff})
\begin{equation}
\label{eqmodb5}
\mathbb{A}=f(\mathbb{S})\mathbb{H}-g(\mathbb{S})(\mathbb{S}^2 \mathbb{H}+ \mathbb{H} \mathbb{S}^2),
\end{equation}
where $f(\mathbb{S}),g(\mathbb{S})$ are defined in terms of the linear invariants of $\mathbb{S}$ as
\[
f(\mathbb{S}):=\frac{I_{\mathbb{S}}^2-II_{\mathbb{S}}}{I_{\mathbb{S}}II_{\mathbb{S}}-III_{\mathbb{S}}}, \quad g(\mathbb{S}):=\frac{1}{I_{\mathbb{S}}II_{\mathbb{S}}-III_{\mathbb{S}}}.
\]
Note that \eqref{eqmodb5} is well defined if $\mathbb{S}$ is positive definite. Hence, we have that
\[
D_t{\mathbb{S}}+\mathbb{S}\mathbb{W}-a \nabla \f v \mathbb{S} -b (\nabla \f v)^T \mathbb{S} =\boldsymbol{0},
\]
with
\[
\mathbb{W}(\mathbb{S},\nabla \f v)=f(\mathbb{S})\mathbb{H}(\mathbb{S},\nabla \f v)-g(\mathbb{S})(\mathbb{S}^2 \mathbb{H}(\mathbb{S},\nabla \f v)+ \mathbb{H}(\mathbb{S},\nabla \f v) \mathbb{S}^2).
\]
We note that the latter model is equivalent to the model for $\mathbb{S}$ derived in \cite{balci}, where $\mathbb{S}$ is interpreted as the symmetric square root of the conformation tensor $\mathbb B$. In the present case we have derived more coincise formulas for the expression of $\mathbb{W}$, with a clearer mechanical and geometrical interpretation, then the ones reported in \cite[Appendix A]{balci}.
 % }

\subsection*{B: Relative energy inequality for the first example}

It has been shown in the proof of Theorem~\ref{thm:visco2} that the assumptions of Proposition~\ref{prop:rel} are fulfilled such that the relative energy inequality~\eqref{relenin} is fulfilled for the system~\eqref{sysvisco2}. The same holds for system~\eqref{sysvisco}. 
With the help of Corollary~\ref{cor:weakstrong}, we inferred the weak-strong uniqueness results from Theorem~\ref{thm:visco2} and Theorem~\ref{thm:visco}. 
Nevertheless, the calculation of the relative energy inequality~\eqref{relenin} remains a nonstandard task, at least for non purely quadratic energy and dissipation. 
We exemplify the calculations here for the convenience of the reader. 

By the definition of the relative energy in Proposition~\ref{prop:rel}, we observe for the system~\eqref{sysvisco2} 
\begin{align*}
    \mathcal{R}(\f v, \bB,E|\tv,\tbB ) &= E - \E(\tv,\tbB) - \left\langle \partial \E(\tv,\tbB), \begin{pmatrix}
        \f v- \tv\\\bB-\tbB 
    \end{pmatrix}\right \rangle \\
    &= E - \E(\f v , \bB) + \int_\Omega \frac{1}{2} | \f v - \tv|^2 + \frac\beta2\left[ | \bB - \MI|^2 - |\tbB - \MI|^2 -2 (\tbB-\MI)(\bB-\tbB)\right] \de x
\\&\quad + (1-\beta) \int_\Omega \left[ \tr(\bB-\MI-\ln(\bB))-\tr(\tbB-\MI-\ln(\tbB))-(\MI-\tbB^{-1})(\bB-\tbB) \right]\de x  
\\
&= E - \E(\f v , \bB) + \int_\Omega \frac{1}{2} | \f v - \tv|^2 \de x
\\
&\quad + \int_\Omega  \frac\beta2 | \bB-\tbB|^2 + (1-\beta) \tr(\tbB^{-1}\bB-\MI-\ln(\tbB^{-1}\bB))\de x
\,.
\end{align*}
Note that the relative energy is nonnegative, which can be observed for the last term by the fact that $\tbB^{-1}\bB$ is positive definite. 
In order to calculate the relative form $\mathcal{W}$, we insert the definition of the dissipation potential~\eqref{visco:diss2} and the system operator~\eqref{Asys2}, we find
\begin{align*}
    \mathcal{W}(\f v ,& \bB|\tv,\tbB) \coloneqq
    \\
   & \qquad\int_\Omega \mu | \nabla \f v - \nabla \tv|^2 + \tr\left( \tbB^{-1} \left[\tbB \bB^{-1} - 2\MI + (\tbB \bB^{-1})^{-1} \right] \right) 
    % + (1-\beta) \left[ \bB(\MI-\bB^{-1})^2 - \tbB (\MI-\tbB^{-1})^2\right] \de x 
    % \\
    % &\quad -\int_\Omega (1-\beta) \left[
    %  \left((\MI-\tbB^{-1}) ^2 + 2  (\MI-\tbB^{-1})\tbB^{-1} \right)(\bB-\tbB) \right] \de x 
    \\
    &\quad +(\beta+\delta (1-\beta)) \int_\Omega  \tr\left((\bB-\tbB)^2\right)
    % \left[(\bB-\MI)^2 - (\tbB-\MI)^2 - 2(\tbB - \MI)  ( \bB -\tbB)\right]
    \de x 
       + \delta \beta\int_\Omega  \tr \left(\left[ \bB + 2 \tbB - 2 \MI\right] ( \bB -\tbB)^2\right) \de x 
    \\
    % & \quad  + \delta \beta\int_\Omega \bB(\bB-\MI)^2 - \tbB(\tbB-\MI)^2 -  \left[ (\tbB-\MI)^2+2 \tbB (\tbB-\MI) \right] ( \bB -\tbB) \de x 
   %  \\
    % &\quad + \int_\Omega2 \left[(\beta+\delta (1-\beta)) (\tbB - \MI) + \delta \beta ((\tbB-\MI)^2+2 \tbB (\tbB-\MI) \right] ( \bB -\tbB) \de x 
    % \\
    &\quad +\int_\Omega \left((\f v - \tv) \otimes (\f v - \tv)+\beta (\bB - \tbB)^2 \right) : \nabla \tv \de x 
    \\
    &\quad + \int_\Omega  (\bB-\tbB)\otimes (\f v- \tv) \dreidots \nabla  \left[(1-\beta) (\MI-\tbB^{-1})+ \beta (\tbB-\MI) \right ]\de x 
    \\
    &\quad + \int_\Omega \left[(\nabla(\f v-\tv))_{\skw}+ \alpha ( \nabla (\f v-\tv))_{\skw}  \right](\bB-\tbB)  : \left[(1-\beta) (\MI-\tbB^{-1})+ \beta (\tbB-\MI) \right ]\de x 
    \\
    & \quad +\delta \int_\Omega (\bB-\tbB)^2 : \left[(1-\beta) (\MI-\tbB^{-1})+ \beta (\tbB-\MI) \right ]\de x 
    \\
%     &\quad+ 2\max\{1,\alpha\}\| (\nabla \vv)_{\sym,-} \|_{L^\infty(\Omega ; \R^{d\times d})} + \frac{C^2}{\beta \mu}\| \nabla \bft\|_{L^3(\Omega)}^2 \\&\quad+ \frac{(1+\alpha)^2}{
% 4\mu\beta}\| \bft\|_{L^\infty(\Omega)}^2 + \frac{2(\beta+\delta-3\delta\beta)_-}{\beta}
&\quad + \mathcal{K}(\tv,\f B(\tbB)) \int_\Omega \frac{1}{2} | \f v - \tv|^2 + \frac\beta2 | \bB-\tbB|^2 + (1-\beta) \tr(\tbB^{-1}\bB-\MI-\ln(\tbB^{-1}\bB))\de x\,,
\end{align*}
where $\f B$ is defined in~\eqref{Bdef} and $\mathcal{K}$ in~\eqref{KB}. 
From the convexity assumption~\eqref{ass:convex}, which was shown to hold in the Proof of Theorem~\ref{thm:visco2}, we infer that $\mathcal{W}(\f v , \bB|\tv,\tbB) \geq 0$
for all solutions $(\f v, \bB)$ in the sense of Definition~\ref{def:visco2} and functions $(\tv,\tbB)$ fulfilling~\eqref{regStrongM}. In order to calculate $\mathcal{W}$, we inserted the simplifications
\begin{align*}
   \tr\left( \bB (\MI-\bB^{-1})^2 - \tbB ( \MI - \tbB)^2 - \left[ (\MI-\tbB^{-1})^2 + 2 ( \MI - \tbB^{-1})\tbB ^{-1}\right ](\bB-\tbB)\right)\\ = \tr \left( \tbB^{-1}\left[\tbB \bB^{-1} - 2\MI + (\tbB \bB^{-1})^{-1} \right]\right)
   \intertext{and}
   \tr\left(\bB(\bB-\MI)^2 - \tbB(\tbB-\MI)^2 -  \left[ (\tbB-\MI)^2+2 \tbB (\tbB-\MI) \right] ( \bB -\tbB)\right) \\= \tr \left(\left[ \bB + 2 \tbB - 2 \MI\right] ( \bB -\tbB)^2\right)\,.
\end{align*}
The second derivative of the energy was already calculated in~\eqref{secondE}. Inserting everything into the relative energy inequality~\eqref{relenin} and estimating $\mathcal{W}$ from below by zero, we end up with 
\begin{align*}
   \Big[ E &- \E(\f v , \bB + \int_\Omega \frac{1}{2} | \f v - \tv|^2 
+ \frac\beta2 | \bB-\tbB|^2 + (1-\beta) \tr(\tbB^{-1}\bB-\MI-\ln(\tbB^{-1}\bB))\de x\Big] \Bigg|_s^t \qquad\\
&+ \int_s^t
% \Bigg\langle \begin{pmatrix}
    \int_\Omega \left[ \t \tv + ( \tv \cdot \nabla ) \tv  - \mu \Delta \tv  - \alpha  \di \left ( (1-\beta) (\tbB-\MI)  +\beta ( \tbB^2 - \tbB) \right ) - \f f\right] (\f v- \tv) \de x \de \tau 
    \\
 & + \int_s^t  \int_\Omega   \left[ \t \tbB  -[ (\nabla \tv)_{\skw}  - \alpha    ( \nabla \tv )_{\sym} \tbB]_{\sym}\right] : \left[\beta(\bB-\tbB)+(1-\beta) \tbB^{-1}(\bB-\tbB)\tbB^{-1}\right] \de x \de \tau \\
& + \int_s^t  \int_\Omega \left[ ( \tv \cdot \nabla ) \tbB + \tbB - \MI + \delta ( \tbB^2 - \tbB)\right] :\left[\beta(\bB-\tbB)+(1-\beta) \tbB^{-1}(\bB-\tbB)\tbB^{-1}\right]  \de x 
% \end{pmatrix}\\
% , \begin{pmatrix}
    % \f v - \tv \\ \beta(\bB-\tbB)+(1-\beta) \tbB^{-1}(\bB-\tbB)\tbB^{-1}
% \end{pmatrix}\Bigg\rangle 
\de \tau \\
\leq{}& \int_s^t\mathcal{K}(\tv,\f B(\tbB))  \left[E(t) - \E(\f v (t), \bB(t)) + \int_\Omega \frac{1}{2} | \f v - \tv|^2 \de x \right ]
\de \tau \qquad\qquad
\qquad\qquad
\\
&+ \int_s^t\mathcal{K}(\tv,\f B(\tbB))  \left[\int_\Omega  \frac\beta2 | \bB-\tbB|^2 + (1-\beta) \tr(\tbB^{-1}\bB-\MI-\ln(\tbB^{-1}\bB))\de x \right ]
\de \tau\,.
\end{align*}
This shows that the nonquadratic energy also requires a nonlinear testing of the equations in order to infer the proper weak-strong uniqueness result. As in Corollary~\ref{cor:weakstrong}, we may infer weak strong uniqueness from the above inequality, but it can also be interpreted as a continuous dependence result as long as a strong solution exists. The relative energy inequality is also used in the literature to analyse multiple scales via singular limits~\cite{singular}, estimate errors of model simplifications~\cite{fischer}, optimal control and numerical approximation~\cite{approx}.

% \Rob{I thought it could be worthwhile to include a section in the appendix on the calculation of the relative energy inequality for a special example?
% I have seen some weak-strong uniqueness proof beforehand, but there only the difference squared is used in the relative energy for the stress tensor. Therefore, It could be interesting how this differs in our framework?
% }
% \abramo{I agree. We could give in the text the specific form of the relative energy inequality for one example, commenting on the differences with respect to analogous results in literature, and putting in the appendix the direct calculations to obtain it. Maybe we can copy and past from the previous version the calculation for the relative energy inequality for the first example?}
\small
%\bibliography{Visco}
%\bibliographystyle{abbrvurl}

  \end{document}